\documentclass[a4paper, 12pt]{article}
\usepackage[T2A]{fontenc}
\usepackage[english,russian]{babel}
\usepackage{amsmath}
\usepackage{amsfonts,amssymb,mathrsfs,amscd}
\usepackage{bm}
\newtheorem{theorem}{Theorem}[subsection]
\newtheorem{lemma}[theorem]{Lemma} 
\newtheorem{propos}[theorem]{Proposition} 
\newtheorem{definition}{Definition}

\newtheorem{remark}{Remark}
\newtheorem{proof}{Proof}

\newcommand{\Ker}{\operatorname{Ker}}

\newcommand{\Kappa}{\operatorname{\mathcal K}}
\newcommand{\sgn}{\operatorname{sign}}
\newcommand{\loc}{\operatorname{loc}}

\newcommand{\e}{\mathfrak e}

\newcommand{\mx}{\operatorname{\mathfrak M}}

\newcommand{\cmx}{\operatorname{\mathfrak C}}
\newcommand{\mn}{\operatorname{\mathfrak m}}
\newcommand{\cmn}{\operatorname{\mathfrak c}}
\newcommand{\m}{\mathfrak m}
\newcommand{\n}{\mathfrak n}
\newcommand{\M}{\mathfrak M}
\newcommand{\s}{\bm s}

\newcommand{\card}{\operatorname{card}}

\newcommand{\supp}{\operatorname{supp}}
\newcommand{\supvrai}{\operatornamewithlimits{sup\,vrai}}
\newcommand{\N}{\mathbb N}
\newcommand{\Z}{\mathbb Z}
\newcommand{\R}{\mathbb R}
\newcommand{\Nu}{\mathcal N}

\newcommand{\D}{\mathcal D}

\renewcommand{\L}{\mathcal L}

\allowdisplaybreaks

\begin{document}

\author{ S. N. Kudryavtsev }
\title{Approximation of partial differential operators on Nikol'skii -- Besov classes of mixed smoothness in domains of a certain kind}
\date{}
\maketitle
\begin{abstract}
In this paper, we consider Nikol'skii and Besov spaces with norms in which, instead of the mixed moduli of continuity of known orders of certain mixed derivatives of
 functions, the "$L_p$-averaged" mixed moduli of continuity of functions of the corresponding orders are used. In the problem of S.B. Stechkin, upper and lower estimates for the best accuracy of approximation of partial differential operators on classes of such functions defined in domains of a certain kind are established. These
 estimates are not weaker, and in some cases are stronger than the corresponding estimates obtained earlier by the author in the considered problem for these classes of 
functions on the cube $ I^d. $ At the same time, the class of Nikol'skii -- Besov spaces of mixed smoothness for which the mentioned estimates in the considered problem
 are obtained is significantly expanded.
\end{abstract}

Keywords: accuracy, approximation, differential operator, Nikol'skii -- Besov classes of mixed smoothness

\section*{Introduction}

This paper deals with the problem of S.B. Stechkin on the approximation in the norm of the space $ L_q $ of the partial differential operator $ \D^{\lambda}$ by bounded operators acting from the space $L_s $ into the space $ L_q, $ whose norm does not exceed $ \rho $, on the Nikol'skii $ (\mathcal S_p^\alpha \mathcal H)^\prime $ and Besov $ (\mathcal S_{p,\theta}^\alpha \mathcal B)^\prime $ classes. Under the corresponding conditions on $ \lambda \in \Z_+^d, d \in \N $, upper and lower estimates of the best accuracy of approximation in the space $ L_q(D), 1 \le q \le \infty, $ of the operator $ \D^\lambda $ by operators acting from $ L_s(D) $ into $ L_q(D) $, whose norm does not exceed $ \rho, $ on the Nikol'skii $ (\mathcal S_p^\alpha \mathcal H)^\prime(D) $ and Besov $ (\mathcal S_{p,\theta}^\alpha \mathcal B)^\prime(D), \alpha \in \R_+^d, 1 \le p < \infty, 1 \le \theta < \infty, $ classes defined in domains $ D \subset \R^d, $ of a certain kind, are established.

The present work continues the research carried out by the author in [1] -- [6] for the mentioned problem with respect to various classes of functions of finite smoothness. Note that the tools used in [1] -- [6] to obtain upper estimates of the above quantity for the classes of functions considered in these works are not applicable to the classes studied in this article. In addition, the schemes used in [1] -- [6] to obtain metric relations used to derive upper estimates of the accuracy of approximation of derivatives of functions from the classes studied in [1] -- [6] are unsuitable for the classes studied in the present work. The tools and schemes for deriving estimates proposed below made it possible to find, on almost the entire set of parameter values defining the objects under consideration, the weak asymptotics of the above quantity for the Nikol'skii and Besov classes of mixed smoothness of functions defined in domains of a certain structure. Thus, the set of function classes of Nikol'skii -- Besov mixed smoothness for which the weak asymptotics of the behavior, depending on $\rho $, of the best accuracy of approximation in the considered problem is established is significantly expanded.
At the same time, for $ \cmn(\tau^{-1} \gamma) \ne 1, \theta \ne \infty $, a stronger (see (2.2.11)) upper estimate than that previously established in [4] for the quantity considered here for such function classes on the cube $ I^d $ is obtained.

The work consists of an introduction and three sections, the first of which contains preliminary information, the second establishes an upper estimate, and the third establishes a lower estimate of the quantity under study.

\section{Preliminary information and auxiliary statements}

\subsection{} In this subsection, notations related to the functional classes and spaces defined in domains from $ \R^d $ ($d \in \N$ everywhere below), considered in this work, are introduced, and some facts necessary for the subsequent exposition are provided.

Let $ \Z_+^d $ denote the set
$$
\Z_+^d = \{\lambda = (\lambda_1, \ldots, \lambda_d) \in \Z^d:
\lambda_j \ge0, j=1, \ldots, d\}.
$$
For $ l \in \Z_+^d $, we also denote by $ \Z_+^d(l) $ the set
\begin{equation*} \tag{1.1.1}
\Z_+^d(l) = \{ \lambda  \in \Z_+^d: \lambda_j \le l_j, j=1, \ldots, d\}.
\end{equation*}

For $ l \in \Z_+^d $, let $ \mathcal P^{l} $ denote the space of real polynomials consisting of all functions $ f: \R^d \mapsto \R $ of the form
$$
f(x) = \sum_{\lambda \in \Z_+^d(l)} a_{\lambda} \cdot
x^{\lambda}, x \in \R^d,
$$
where $ a_{\lambda} \in \R,  x^{\lambda} =
x_1^{\lambda_1} \ldots x_d^{\lambda_d}, \lambda \in \Z_+^d(l). $

In $ \R^d $, we fix the norm
\begin{equation*}
\|x\| = \max_{j =1, \ldots,d} |x_j|.
\end{equation*}

For a set $ A $ from a topological space $ T $, $ \overline A $ denotes the closure of the set $ A. $ 

Let $ \chi_A $ denote the characteristic function of the set $ A \subset \R^d. $

For a Lebesgue measurable set $ D \subset \R^d $ and $ 1 \le p \le \infty $, $ L_p(D) $ denotes, as usual, the space of all real measurable functions $ f $ on $ D $ for which the norm
$$
\|f\|_{L_p(D)} = \begin{cases}
(\int_D |f(x)|^p dx)^{1/p}, 1 \le p < \infty; \\
\supvrai_{x \in D}|f(x)|, p = \infty,
\end{cases}
$$
is defined; here, as usual, functions that coincide almost everywhere on $ D $ are identified.

For $ x,y \in \R^d $, put $ xy = x \cdot y =
(x_1 y_1, \ldots, x_d y_d), $
and for $ x \in \R^d $ and $ A \subset \R^d $ define
$$
x A = x \cdot A = \{xy: y \in A\}.
$$

For $ x,y \in \R^d $, denote
$$
(x,y) = \sum_{j =1}^d x_j y_j.
$$

For $ x \in \R^d: x_j \ne 0, $ for $ j=1,\ldots,d,$ put
$ x^{-1} = (x_1^{-1},\ldots,x_d^{-1}). $

For $ x,y \in \R^d $, we write $ x \le y (x < y) $ if for each $ j=1,\ldots,d $ the inequality $ x_j \le y_j (x_j < y_j) $ holds.

For $ x \in \R^d $, put
$$
x_+ = ((x_1)_+, \ldots, (x_d)_+),
$$
where $ t_+ = \frac{1} {2} (t +|t|), t \in \R. $

Denote by $ \R_+^d $ the set of $ x \in \R^d $ such that $ x_j >0 $ for $ j=1,\ldots,d, $ and for $ a \in \R_+^d, x \in \R^d $ put $ a^x = a_1^{x_1} \ldots a_d^{x_d}.$

Define the sets
$$
I^d = \{x \in \R^d: 0 < x_j < 1,j=1,\ldots,d\},
$$
$$
\overline I^d = \{x \in \R^d: 0 \le x_j \le 1,j=1,\ldots,d\},
$$
$$
B^d = \{x \in \R^d: -1 \le x_j \le 1,j=1,\ldots,d\}.
$$

Let $ \e $ denote the vector in $ \R^d $ given by $ \e = (1,\ldots,1). $

For $ \lambda \in \Z_+^d $, let $ \D^\lambda $ denote the differential operator $ \D^\lambda =
\frac{\partial^{|\lambda|}} {\partial x_1^{\lambda_1} \ldots \partial x_d^{\lambda_d}}, $
where $ |\lambda| = \sum_{j=1}^d \lambda_j. $

Next, recall that for an open set $ D \subset \R^d $ and a vector $ h \in \R^d $, $ D_h $ denotes the set
$$
D_h = \{x \in D: x +th \in D \ \forall t \in \overline I\}.
$$

For a function $ f $ defined on an open set $ D \subset \R^d $ and a vector $ h \in \R^d $, define its difference $ \Delta_h f $ with step $ h $ on $ D_h $ by setting
$$
(\Delta_h f)(x) = f(x +h) -f(x), x \in D_h,
$$
and for $ l \in \N: l \ge 2, $ define the $l$-th difference $ \Delta_h^l f $ of $ f $ with step $ h $ on $ D_{lh} $ by the equality
$$
(\Delta_h^l f)(x) = (\Delta_h (\Delta_h^{l-1} f))(x),
x \in D_{lh},
$$
also set $ \Delta_h^0 f = f. $

As is known, the equality holds
$$
(\Delta_h^l f)(\cdot) = \sum_{k=0}^l C_l^k (-1)^{l-k}
f(\cdot +kh),
C_l^k =\frac{l!} {k! (l-k)!}.
$$

For $ j=1,\ldots,d$, let $ e_j $ denote the vector $ e_j = (0,\ldots,0,1_j,0,\ldots,0).$

As shown in [7], the following lemma holds.

\begin{lemma} \label{l1.1.1}

   For $ l \in \Z_+^d $
for any $ \delta \in \R_+^d $ and $ x^0 \in \R^d $ for $ Q = x^0 +\delta I^d $
there exists a unique linear operator
$ P_{\delta, x^0}^{l}: L_1(Q) \mapsto
\mathcal P^{l}, $
possessing the following properties:
\begin{enumerate}
\item for $ f \in \mathcal P^{l} $ the equality holds
\begin{equation*}
P_{\delta, x^0}^{l}(f \mid_Q) = f,
  \end{equation*}

\item \begin{equation*}
\Ker P_{\delta,x^0}^{l} = \biggl\{\,f \in L_1(Q):
\int \limits_{Q} f(x) g(x) \,dx =0\ \forall g \in \mathcal P^{l}\,\biggr\},
\end{equation*}

and there exist constants $ c_1(l) >0 $ and $ c_2(l) >0 $ such that

\item for $ 1 \le p \le \infty $ and $ f \in L_p(Q) $ the inequality holds
  \begin{equation*}
\|P_{\delta, x^0}^{l} f \|_{L_p(Q)} \le c_1
\|f\|_{L_p(Q)},
  \end{equation*}

\item for $ 1 \le p < \infty $ and $ f \in L_p(Q) $ the inequality holds
  \begin{equation*} \tag{1.1.2}
  \| f -P_{\delta, x^0}^{l} f \|_{L_p(Q)} \le c_2
\sum_{j=1}^d \delta_j^{-1/p} \biggl(\int_{\delta_j B^1} \int_{Q_{(l_j +1) \xi e_j}}
|\Delta_{\xi e_j}^{l_j +1} f(x)|^p dx d\xi\biggr)^{1/p}.
\end{equation*}
\end{enumerate}
\end{lemma}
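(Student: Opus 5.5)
The operator $P^l_{\delta,x^0}$ will be the orthogonal projection onto $\mathcal P^l$ in $L_2(Q)$, extended to $L_1(Q)$, and the estimates will first be proved on the unit cube and then transferred to $Q$ by the affine change of variables $x = x^0 + \delta y$.

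\textbf{Construction, uniqueness and property 3.} On $I^d$ choose an $L_2(I^d)$-orthonormal basis of $\mathcal P^l$ made of products of one-dimensional Legendre polynomials $\prod_j \ell_{\mu_j}(y_j)$, $\mu \in \Z_+^d(l)$. Set $P f = \sum_\mu (f, g_\mu) g_\mu$. This is defined on $L_1(I^d)$ because the $g_\mu$ are bounded, it reproduces $\mathcal P^l$, and its kernel is exactly the annihilator of $\mathcal P^l$.

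Uniqueness follows from properties 1 and 2. Any such operator $T$ satisfies $f - Tf \in \Ker T$, since $T(f - Tf) = Tf - Tf = 0$ by property 1. Hence $f - Tf$ is orthogonal to $\mathcal P^l$, so $Tf$ is the orthogonal projection of $f$.

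For property 3 we have $\|Pf\|_{L_p} \le \sum_\mu \|g_\mu\|_{L_\infty}\|g_\mu\|_{L_p}\|f\|_{L_1} \le c\|f\|_{L_p}$, using $\|f\|_{L_1(I^d)} \le \|f\|_{L_p(I^d)}$. Transport to $Q$ by $(P_{\delta,x^0} f)(x) = (P(f\circ A))(A^{-1}x)$ with $A y = x^0 + \delta y$. Polynomials of coordinatewise degree $\le l$ are preserved by this map, orthogonality is preserved up to the Jacobian, and the constant in property 3 is scale invariant because both sides scale by $(\prod \delta_j)^{1/p}$.

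\textbf{Property 4.} Again reduce to $\delta = \e$, $x^0 = 0$. Under the substitution, the $j$-th term on the right scales as $\delta_j^{-1/p}\cdot \delta_j^{1/p}(\prod\delta_i)^{1/p}$: the factor $\delta_j^{1/p}$ comes from $d\xi$ and $(\prod\delta_i)^{1/p}$ from $dx$. The left side scales by $(\prod\delta_i)^{1/p}$, so the inequality is invariant.

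On $I^d$, proceed coordinatewise. Let $P_j$ be the one-dimensional projection onto polynomials of degree $\le l_j$ in $x_j$, acting in the variable $x_j$ only. Then $P = P_1 \cdots P_d$ is the tensor product, which follows from the product structure of the basis. Telescope:
$$
f - Pf = \sum_{j=1}^d P_1\cdots P_{j-1}(f - P_j f).
$$
Each $P_i$ is bounded in $L_p(I^d)$ by Fubini and the one-dimensional version of property 3. It therefore suffices to prove the one-dimensional Whitney-type estimate
$$
\|g - P_j g\|_{L_p(I^1)}^p \le c\int_{B^1}\int_{I_{(l_j+1)\xi}}|\Delta^{l_j+1}_\xi g(t)|^p\,dt\,d\xi ,
$$
and then integrate over the remaining variables with Fubini.

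\textbf{Main obstacle: the averaged one-dimensional Whitney inequality.} I would prove it directly, with no differentiability assumption. Since $P_j$ reproduces polynomials and is bounded, $\|g - P_j g\|_{L_p} \le (1+c)\|g - \pi\|_{L_p}$ for every polynomial $\pi$ of degree $\le k-1$, where $k = l_j+1$. So we need a polynomial $\pi$ with $\|g - \pi\|_{L_p}$ controlled by the averaged differences.

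Use the identity
$$
g(t) = (-1)^{k+1}\Delta^k_{h}g(t) + \sum_{i=1}^{k} c_i\, g(t+ih),
$$
with $h = (u - t)/k$, and average it over $u$ in a fixed subinterval of length $\sim 1/k$. The averaged sum $\sum_i c_i \int g(t + i(u-t)/k)\,du$ is not itself a polynomial in $t$. The standard fix is the Whitney/Storozhenko trick: use the integral representation repeatedly, or equivalently use Steklov averages of order $k$, to write $g$ as a polynomial plus terms bounded by $\int\!\!\int |\Delta_\xi^k g|\,$. Alternatively, argue by compactness: on the quotient $L_p/\mathcal P^{k-1}$, the functional $\Omega(g) = (\int\!\int|\Delta^k_\xi g|^p)^{1/p}$ vanishes only on polynomials, by the classical fact that $\Delta^k_\xi g = 0$ for a.e. small $\xi$ forces $g$ to be a polynomial. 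One then needs a lower bound on a compact set, which I would obtain by showing that $\Omega$-bounded sets are precompact in $L_p$ via the Kolmogorov--Riesz criterion, using the estimate $\omega_k(g,t)_p \lesssim \Omega$ for $t$ small.

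Either route works. I would try the Steklov-average route first, because it yields the explicit constant $c_2(l)$. The change of variables $\xi = (u-t)/k$ turns the error terms into the double integral over $\xi \in B^1$ and $t \in I_{k\xi}$, which accounts for the domain $Q_{(l_j+1)\xi e_j}$ in (1.1.2).
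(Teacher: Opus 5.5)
The paper gives no proof of this lemma; it quotes it from [7]. Judged on its own, your outline is sound up to the last step. The tensor-Legendre orthogonal projection, the uniqueness argument via $f-Tf\in\Ker T$, property 3, the scaling invariance of (1.1.2), the factorization $P=P_1\cdots P_d$, the telescoping identity and the Fubini reduction are all correct. However, all the analytic content of property 4 lies in the one-dimensional estimate
\[
\|g-P_jg\|_{L_p(I)}^p\le c\int_{B^1}\int_{I_{k\xi}}|\Delta^k_\xi g(t)|^p\,dt\,d\xi,\qquad k=l_j+1,
\]
and you do not prove it. You list two strategies and assert that ``either route works''. That is the gap.

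Neither sketch closes it as written.

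\textbf{Compactness route.} The claim that $\Omega$-bounded sets are precompact in $L_p$ is false. The functions $g_n(t)=\sin(2\pi nt)$ are bounded in $L_p(I)$ and satisfy $\Omega(g_n)\le 2^{k+1/p}$, yet they converge weakly to $0$ without converging in norm. What you actually need is precompactness of a sequence with $\|g_n\|_{L_p(I)}=1$, $P_jg_n=0$ and $\Omega(g_n)\to0$. Kolmogorov--Riesz then requires uniform smallness of \emph{first-order} translates $\|g_n(\cdot+h)-g_n\|_{L_p}$, up to the endpoints. For $k\ge2$ you can obtain this from $k$-th differences only through two further facts: a Marchaud-type inequality on the interval, and the equivalence of $\omega_k(g,t)_p$ with its averaged version. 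Both are nontrivial, and you neither prove nor cite them.

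\textbf{Steklov route.} You correctly note that averaging your identity does not produce a polynomial, and stop there. A workable version runs as follows.
\begin{enumerate}
\item On $(0,3/4)$ take the $k$-fold forward Steklov function $g_h$. Its error satisfies $\|g-g_h\|_{L_p}\le\bigl(Ch^{-1}\int_0^h\|\Delta^k_\xi g\|_{L_p(I_{k\xi})}^p\,d\xi\bigr)^{1/p}$, because the mean $(u_1+\dots+u_k)/k$ of $k$ uniform variables on $[0,h]$ has density at most $C/h$.
\item Its top derivative is $g_h^{(k)}=h^{-k}\sum_{i=1}^k b_i\Delta^k_{ih/k}g$. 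This involves the specific steps $ih/k$, so choose $h\in[h_0,2h_0]$ by averaging over $h$, so that $\sum_i\|\Delta^k_{ih/k}g\|_{L_p}^p\lesssim h_0^{-1}\Omega(g)^p$.
\item Replace $g_h$ by its Taylor polynomial; the error is controlled by $\|g_h^{(k)}\|_{L_p}$.
\item Repeat with backward steps on $(1/4,1)$. Glue the two polynomials using the equivalence of norms on $\mathcal P^{k-1}$ over $(1/4,3/4)$ and over $I$.
\end{enumerate}
Alternatively, cite Whitney's inequality $E_{k-1}(g)_{L_p(I)}\le C\,\omega_k(g,1/k)_{L_p(I)}$ together with the equivalence of $\omega_k$ and the averaged modulus.

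Until one of these arguments is carried out, property 4 is not established.
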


Now we define the function spaces and classes considered in this paper (cf. [8], [9]). But first we introduce some notation.

For $ x \in \R^d $, let $\s(x) $ denote the set $ \s(x) = \{j =1,\ldots,d: x_j \ne 0\}, $ and for a set $ J \subset \{1,\ldots,d\} $ let $ \chi_J $ denote the vector from $ \R^d $ with components
$$
(\chi_J)_j = \begin{cases} 1, & \text{ for } j \in J; \\
0, & \text{ for } j \in (\{1,\ldots,d\} \setminus J).
\end{cases}
$$

For $ x \in \R^d $ and $ J = \{j_1,\ldots,j_k\}
\subset \N: 1 \le j_1 < j_2 < \ldots < j_k \le d, $ let $ x^J $
denote the vector $ x^J = (x_{j_1},\ldots,x_{j_k}) \in \R^k, $ and for a set $ A \subset \R^d $ put $ A^J = \{x^J: x \in A\}. $

For an open set $ D \subset \R^d $ and vectors $ h \in \R^d $ and $ l \in \Z_+^d $, let $ D_h^l $ denote the set
\begin{multline*}
D_h^l = (\ldots (D_{l_d h_d e_d})_{l_{d-1} h_{d-1} e_{d-1}}
\ldots)_{l_1 h_1 e_1} = \{ x \in D: x +tlh \in D \ \forall t \in
\overline I^d\} = \\ \{ x \in D: (x +\sum_{j \in \s(l)} t_j l_j h_j e_j) \in D \
\forall t^{\s(l)} \in (\overline I^d)^{\s(l)} \}.
\end{multline*}

Let $ D $ be an open set in $ \R^d $ and $ 1 \le p \le \infty. $
Then for $ f \in L_p(D), h \in \R^d $ and $ l \in \Z_+^d $, define on $ D_h^l $ the mixed difference of $ f $ of order $ l $ corresponding to the vector $ h $ by the equality
\begin{multline*}
(\Delta_h^l f)(x) = \biggl(\biggl(\prod_{j=1}^d
\Delta_{h_j e_j}^{l_j}\biggr) f\biggr)(x)
= \biggl(\biggl(\prod_{j \in \s(l)}
\Delta_{h_j e_j}^{l_j}\biggr) f\biggr)(x) = \\
\sum_{k \in \Z_+^d(l)} (-\e)^{l-k} C_l^k f(x+kh), x \in D_h^l,
\end{multline*}
where $ C_l^k = \prod_{j=1}^d C_{l_j}^{k_j}. $

Keeping in mind that for $ f \in L_p(D), l \in \Z_+^d $ and vectors $ h,h^\prime \in \R^d: h^{\s(l)} =
(h^\prime)^{\s(l)}, $ the relation holds
$$
\| \Delta_h^l f\|_{L_p(D_h^l)} = \| \Delta_{h^\prime}^l
f\|_{L_p(D_{h^\prime}^l)}, 1 \le p \le \infty,
$$
for $ 1 \le p \le \infty $ and $ f \in L_p(D) $ define the mixed modulus of continuity in $ L_p(D) $ of order $ l \in \Z_+^d $ by the equality
$$
\Omega^l (f,t^{\s(l)})_{L_p(D)} = \supvrai_{ \{ h \in \R^d: h^{\s(l)} \in t^{\s(l)} (B^d)^{\s(l)} \}}
\| \Delta_h^l f\|_{L_p(D_h^l)},
t^{\s(l)} \in (\R_+^d)^{\s(l)}.
$$
Furthermore, under the same conditions, introduce for the function $ f $ the "averaged" mixed modulus of continuity in $ L_p(D) $ of order $ l, $ setting
\begin{multline*}
\Omega^{\prime l} (f, t^{\s(l)})_{L_p(D)} =
\begin{cases}
\biggl((2 t^{\s(l)})^{-\e^{\s(l)}}
\int_{ t^{\s(l)} (B^d)^{\s(l)}}
\| \Delta_\xi^l f\|_{L_p(D_\xi^l)}^p
d \xi^{\s(l)}\biggr)^{1 /p} = \\
\biggl((2 t^{\s(l)})^{-\e^{\s(l)}}
\int_{ (t B^d)^{\s(l)}} \int_{D_\xi^{l \chi_{\s(l)}}}
| \Delta_\xi^{l \chi_{\s(l)}} f(x)|^p dx
d \xi^{\s(l)}\biggr)^{1 /p}, p \ne \infty; \\
\Omega^l (f,t^{\s(l)})_{L_p(D)}, p = \infty,
\end{cases} \\
t^{\s(l)} \in (\R_+^d)^{\s(l)}.
\end{multline*}

From the definitions given it is clear that
\begin{multline*} \tag{1.1.3}
\Omega^{\prime l} (f, t^{\s(l)})_{L_p(D)} \le
\Omega^l (f, t^{\s(l)})_{L_p(D)}, \\
t^{\s(l)} \in (\R_+^d)^{\s(l)}, 
f \in L_p(D), 1 \le p \le \infty, l \in \Z_+^d, \\
D \text{ -- arbitrary open set in } \R^d.
\end{multline*}

Now let $ \alpha \in \R_+^d, 1 \le p \le \infty $ and $ D $ be a domain in $ \R^d. $ Then define the vector $ l = l(\alpha) \in \N^d, $ by setting $ l_j = \min \{m \in \N: \alpha_j < m \}, j =1,\ldots,d, $ and denote by $ (S_p^\alpha H)^\prime(D) ((\mathcal S_p^\alpha \mathcal H)^\prime(D)) $
the set of all functions $ f \in L_p(D) $ such that for any nonempty set $ J \subset \{1,\ldots,d\} $ the inequality holds
$$
\sup_{t^J \in (\R_+^d)^J}
(t^J)^{-\alpha^J} \Omega^{\prime l \chi_J}(f,
t^J)_{L_p(D)}
= \sup_{t^J \in (\R_+^d)^J} (\prod_{j \in J}
t_j^{-\alpha_j})
\Omega^{\prime l \chi_J}(f, t^{\s(l\chi_J)})_{L_p(D)} < \infty (\le 1).
$$

In the space $ (S_p^\alpha H)^\prime(D) $, the norm is defined by
$$
\| f \|_{(S_p^\alpha H)^\prime(D)} = 
\max \biggl(\| f \|_{L_p(D)}, \max_{J \subset \{1, \ldots, d\}:
J \ne \emptyset} \sup_{t^J \in (\R_+^d)^J}
(t^J)^{-\alpha^J}
\Omega^{\prime l \chi_J}(f, t^J)_{L_p(D)}\biggr), 
$$
$f \in (S_p^\alpha H)^\prime(D)$.

Let $ \alpha,p,D $ and $ l = l(\alpha) $ be the same as above, and $ \theta \in \R: 1 \le \theta < \infty. $
Then denote by $ (S_{p,\theta}^\alpha B)^\prime(D) ((\mathcal S_{p,\theta}^\alpha \mathcal B)^\prime(D)) $ the set of all functions $ f \in L_p(D) $ which for any nonempty set $ J \subset \{1,\ldots,d\} $ satisfy the condition
\begin{multline*}
\biggl(\int_{(\R_+^d)^J} (t^J)^{-\e^J -\theta \alpha^J}
(\Omega^{\prime l \chi_J}(f, t^J)_{L_p(D)})^\theta dt^J\biggr)^{1/\theta} =\\
\biggl(\int_{(\R_+^d)^J} (\prod_{j \in J}
t_j^{-1 -\theta \alpha_j})
(\Omega^{\prime l \chi_J}(f,
t^{\s(l \chi_J)})_{L_p(D)}^\theta
\prod_{j \in J} dt_j\biggr)^{1/\theta} < \infty (\le 1).
\end{multline*}

In the space $ (S_{p,\theta}^\alpha B)^\prime(D) $ the norm is defined by
\begin{multline*}
\| f \|_{(S_{p,\theta}^\alpha B)^\prime(D)} = \\
\max \biggl(\| f\|_{L_p(D)},
\max_{J \subset \{1, \ldots, d\}: J \ne \emptyset}
\biggl(\int_{(\R_+^d)^J} (t^J)^{-\e^J -\theta \alpha^J}
(\Omega^{\prime l \chi_J}(f, t^J)_{L_p(D)})^\theta dt^J\biggr)^{1/\theta}\biggr), \\
 \ f \in (S_{p,\theta}^\alpha B)^\prime(D).
\end{multline*}

For $ \theta = \infty $ put $ (S_{p,\infty}^\alpha B)^\prime(D) =
(S_p^\alpha H)^\prime(D), (\mathcal S_{p,\infty}^\alpha \mathcal B)^\prime(D) =
(\mathcal S_p^\alpha \mathcal H)^\prime(D). $

Taking into account that for $ f \in (S_{p,\theta}^\alpha
B)^{\prime}(D), t^J \in (\R_+^d)^J (J \subset
\{1, \ldots, d\}: J \ne \emptyset) $
the inequality holds (see [10])
\begin{multline*}
(t^J)^{-\alpha^J} \Omega^{\prime l \chi_J}(f,
t^J)_{L_p(D)} \le \\
(\prod_{j \in J} 2^{\alpha_j +1/\theta +1 /p})
\biggl(\int_{(\R_+^d)^J} (\tau^J)^{-\e^J -\theta \alpha^J} (\Omega^{\prime l \chi_J}(f,
\tau^J)_{L_p(D)})^\theta d\tau^J\biggr)^{1/\theta},
\end{multline*}
we conclude that
\begin{equation*} \tag{1.1.4}
(\mathcal S_{p, \theta}^\alpha \mathcal B)^\prime(D) \subset c_3(\alpha)
(\mathcal S_p^\alpha \mathcal H)^\prime(D),
\end{equation*}
where $ c_3(\alpha) = \prod_{j=1}^d 2^{2+\alpha_j}. $

Let $ \alpha,p,D $ and $ l = l(\alpha) $ be the same as above, and $ \theta \in \R: 1 \le \theta < \infty. $ Then denote by $ (S_{p,\theta}^\alpha B)^0(D)
((\mathcal S_{p,\theta}^\alpha \mathcal B)^0(D)) $
the set of all functions $ f \in L_p(D) $ which for any nonempty set $ J \subset \{1,\ldots,d\} $ satisfy the condition
\begin{multline*}
\biggl(\int_{(\R_+^d)^J} (t^J)^{-\e^J -\theta \alpha^J}
(\Omega^{l \chi_J}(f, t^J)_{L_p(D)})^\theta
dt^J\biggr)^{1/\theta} = \\
\biggl(\int_{(\R_+^d)^J} (\prod_{j \in J}
t_j^{-1 -\theta \alpha_j})
(\Omega^{l \chi_J}(f, t^{\s(l \chi_J)})_{L_p(D)})^\theta \prod_{j \in J} dt_j\biggr)^{1/\theta} < \infty (\le 1),
\end{multline*}
and by $ (S_p^\alpha H)^0(D) ((\mathcal S_p^\alpha \mathcal H)^0(D)) $ --
the set of all functions $ f \in L_p(D) $ such that for any nonempty set $ J \subset \{1,\ldots,d\} $ the inequality holds
$$
\sup_{t^J \in (\R_+^d)^J}
(t^J)^{-\alpha^J} \Omega^{l \chi_J}(f,t^J)_{L_p(D)}
= \sup_{t^J \in (\R_+^d)^J} (\prod_{j \in J}
t_j^{-\alpha_j})
\Omega^{l \chi_J}(f, t^{\s(l\chi_J)})_{L_p(D)} < \infty (\le 1).
$$
For $ \theta = \infty $ put $ (S_{p,\infty}^\alpha B)^0(D) =
(S_p^\alpha H)^0(D), (\mathcal S_{p,\infty}^\alpha \mathcal B)^0(D) =
(\mathcal S_p^\alpha \mathcal H)^0(D). $

The norm in the space $ (S_{p,\theta}^\alpha B)^0(D) $ is defined by replacing in the definition of the norm $ \|
\cdot\|_{(S_{p,\theta}^\alpha B)^\prime(D)} $ the quantity $ \Omega^{\prime l \chi_J}(f,t^J)_{L_p(D)} $ by $ \Omega^{l
\chi_J}(f,t^J)_{L_p(D)}. $

From (1.1.3) it follows that
\begin{multline*} \tag{1.1.5}
\| f\|_{(S_{p, \theta}^\alpha B)^\prime(D)} \le
\| f\|_{(S_{p, \theta}^\alpha B)^{0}(D)}, \\
\ f \in (S_{p, \theta}^\alpha B)^{0}(D),
\alpha \in \R_+^d, 1 \le p \le \infty, 1 \le \theta \le \infty, \\
D \text{ -- arbitrary domain in } \R^d.
\end{multline*}

Denote by $ C^\infty(D) $ the space of infinitely differentiable functions on an open set $ D \subset \R^d, $ and by $ C_0^\infty(D) $ -- the space of functions $ f \in C^\infty(\R^d) $ each of which has compact support $ \supp f \subset D. $
Furthermore, let $ L_1^{\loc}(D) $ denote the space of real locally integrable functions on an open set $ D \subset \R^d, $ i.e., the space of real functions on $ D $ that are summable on any compact set lying in $ D. $

In conclusion of this subsection, we introduce a few more notations.

For a Banach space $ X $ (over $ \R$), denote $ B(X) = \{x \in X: \|x\|_X \le 1\}. $

For Banach spaces $ X,Y $, let $ \mathcal B(X,Y) $ denote the Banach space consisting of continuous linear operators $ T: X \mapsto Y, $ with norm
$$
\|T\|_{\mathcal B(X,Y)} = \sup_{x \in B(X)} \|Tx\|_Y.
$$

\subsection{}
This subsection contains information on multiple series that will be used later.

For $ y \in \R^d $, set
$$
\mn(y) = \min_{j=1,\ldots,d} y_j
$$
and for a Banach space $ X, $ a vector $ x \in X $ and a family $ \{x_\kappa \in X, \kappa \in \Z_+^d\} $, we write $ x = \lim_{ \mn(\kappa) \to \infty} x_\kappa, $
if for any $ \epsilon >0 $
there exists $ n_0 \in \N $ such that for any $ \kappa \in \Z_+^d $ for which $ \mn(\kappa) > n_0, $ the inequality $ \|x -x_\kappa\|_X < \epsilon $ holds.

Let $ X $ be a Banach space and $ \{ x_\kappa \in X, \kappa \in \Z_+^d\} $ be a family of vectors.
Then by the sum of the series $ \sum_{\kappa \in \Z_+^d} x_\kappa $
we mean a vector $ x \in X $ for which the equality holds
$ x = \lim_{\mn(k) \to \infty} \sum_{\kappa \in
\Z_+^d(k)} x_\kappa. $

Let $ \Upsilon^d $ denote the set
$$
\Upsilon^d = \{ \epsilon \in \Z^d: \epsilon_j \in
\{0,1\}, j=1,\ldots,d\}.
$$

The following lemma holds.

\begin{lemma} \label{l1.2.1}

Let $ X $ be a Banach space, and let a vector $ x \in X $ and a family $ \{x_\kappa \in X, \kappa \in \Z_+^d\} $ be such that $ x = \lim_{ \mn(\kappa) \to \infty} x_\kappa, $ Then for the family $ \{ \mathcal X_\kappa \in X, \kappa \in \Z_+^d \} $ whose elements are defined by the equality
$$
\mathcal X_\kappa = \sum_{\epsilon \in \Upsilon^d:
\s(\epsilon) \subset \s(\kappa)} (-\e)^\epsilon
x_{\kappa -\epsilon}, \kappa \in \Z_+^d,
$$
the equality holds
$$
x = \sum_{\kappa \in \Z_+^d} \mathcal X_\kappa.
$$
\end{lemma}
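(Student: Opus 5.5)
The idea is to show that the partial sums of the series $\sum_{\kappa \in \Z_+^d} \mathcal X_\kappa$ over rectangles $\Z_+^d(k)$ telescope exactly to $x_k$. Then the statement follows at once from the hypothesis $x = \lim_{\mn(\kappa)\to\infty} x_\kappa$ and the definition of the sum of a multiple series given above. So the whole proof reduces to the combinatorial identity
\begin{equation*}
\sum_{\kappa \in \Z_+^d(k)} \mathcal X_\kappa = x_k, \quad k \in \Z_+^d .
\end{equation*}

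To establish it, I will first rewrite $\mathcal X_\kappa$ as a mixed first-order backward difference. For $j=1,\ldots,d$, let $\nabla_j$ act on families $\{y_\kappa\}$ by $(\nabla_j y)_\kappa = y_\kappa - y_{\kappa - e_j}$ if $\kappa_j \ge 1$ and $(\nabla_j y)_\kappa = y_\kappa$ if $\kappa_j = 0$. Equivalently, one extends $y$ by zero to indices with a negative component. Expanding the product $\prod_{j=1}^d \nabla_j$ gives the sum over $\epsilon \in \Upsilon^d$ of $(-\e)^\epsilon y_{\kappa-\epsilon}$. The terms with $\epsilon_j = 1$ and $\kappa_j = 0$ vanish by the zero extension. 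This is exactly the restriction $\s(\epsilon) \subset \s(\kappa)$, so
\begin{equation*}
\mathcal X_\kappa = \Bigl(\bigl(\prod_{j=1}^d \nabla_j\bigr) x\Bigr)_\kappa .
\end{equation*}

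Next I sum over the rectangle one coordinate at a time. For a fixed $j$ and fixed values of the other indices, the one-dimensional sum $\sum_{\kappa_j=0}^{k_j} (\nabla_j y)_\kappa$ telescopes to $y$ evaluated at $\kappa_j = k_j$. The operators $\nabla_j$ act in different variables and commute. Applying this successively for $j = 1,\ldots,d$ to $\sum_{\kappa \in \Z_+^d(k)}$, which is an iterated sum, yields $x_k$. Alternatively, one can argue by induction on $d$: write $\mathcal X_\kappa = \mathcal X'_\kappa - \mathcal X'_{\kappa - e_d}$ (the second term dropped when $\kappa_d = 0$), where $\mathcal X'$ is the analogous expression in the first $d-1$ coordinates, and telescope in $\kappa_d$.

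Finally, given $\epsilon > 0$, take the $n_0$ from the hypothesis. For every $k$ with $\mn(k) > n_0$ we then have $\|x - \sum_{\kappa \in \Z_+^d(k)} \mathcal X_\kappa\|_X = \|x - x_k\|_X < \epsilon$, which is exactly the statement. The only step that needs care is the bookkeeping of the boundary condition $\s(\epsilon) \subset \s(\kappa)$ in the telescoping. I would handle it through the zero-extension convention above, and there is no real obstacle beyond that.
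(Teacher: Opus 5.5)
Your proposal is correct and follows the same route as the paper. The paper also reduces the lemma to the identity $\sum_{\kappa \in \Z_+^d(k)} \mathcal X_\kappa = x_k$ and then concludes directly from the definition of the sum of a multiple series; it cites [7] for that identity, whereas you verify it yourself via zero-extended mixed backward differences, which is a valid argument.
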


The lemma is a consequence of the fact that for $ k \in \Z_+^d $
the equality holds
\begin{equation*}
\sum_{\kappa \in \Z_+^d(k)}
\mathcal X_\kappa = x_k \text{ (see [7])}.
\end{equation*}

\begin{remark} \label{z1}

It is easy to see that for any family of numbers
$ \{x_\kappa \in \R: x_\kappa \ge 0, \kappa \in
\Z_+^d\}, $ if the series $ \sum_{\kappa \in \Z_+^d} x_\kappa $ converges, i.e., there exists a limit
$ \lim_{\mn(k) \to \infty} \sum_{\kappa \in \Z_+^d(k)} x_\kappa, $
which is equivalent to the relation
$ \sup_{k \in \Z_+^d} \sum_{\kappa \in \Z_+^d(k)}
x_\kappa < \infty, $

then for any sequence of subsets
$ \{Z_n \subset \Z_+^d, n \in \Z_+\}, $
 such that $ \card Z_n < \infty,
Z_n \subset Z_{n+1},  n \in \Z_+, $
and $ \cup_{ n \in \Z_+} Z_n = \Z_+^d, $
the equality holds
$ \sum_{\kappa \in \Z_+^d} x_\kappa =
\lim_{ n \to \infty} \sum_{\kappa \in Z_n} x_\kappa =
\sup_{k \in \Z_+^d} \sum_{\kappa \in \Z_+^d(k)}
x_\kappa. $
From this it is easy to understand that if for a family of vectors
$ \{x_\kappa \in X, \kappa \in \Z_+^d\}  $ of a Banach space $ X $
the series $ \sum_{\kappa \in \Z_+^d} \| x_\kappa \|_X $ converges,
then for any sequence of subsets
$ \{Z_n \subset \Z_+^d, n \in \Z_+\}, $
 such that $ \card Z_n < \infty,
Z_n \subset Z_{n+1},  n \in \Z_+, $
and $ \cup_{ n \in \Z_+} Z_n = \Z_+^d, $
the equality holds in $ X $
$ \sum_{\kappa \in \Z_+^d} x_\kappa =
\lim_{ n \to \infty} \sum_{\kappa \in Z_n} x_\kappa. $
\end{remark}

For $ x \in \R^d $, denote
\begin{eqnarray*}
\mx(x)  &=& \max_{j=1,\ldots,d} x_j, \\
\cmx(x) &=& \card \{j \in \{1,\ldots,d\}: x_j =
\mx(x)\}, \\
\cmn(x) &=& \card \{j \in \{1,\ldots,d\}: x_j =
\mn(x)\}.
\end{eqnarray*}

\begin{lemma} \label{l1.2.2}

Let $ \alpha, \beta \in \R_+^d. $ Then there exist constants $ c_1(\alpha,\beta) >0 $ and $ c_2(\alpha,\beta) >0 $ such that for $ r \in \N $ the inequality holds
\begin{equation*} \tag{1.2.1}
c_1 2^{-\mn(\beta^{-1} \alpha) r} r^{\cmn(\beta^{-1} \alpha) -1}
\le \sum_{\kappa \in \Z_+^d: (\kappa, \beta) >
 r} 2^{-(\kappa, \alpha)} \le
c_2 2^{-\mn(\beta^{-1} \alpha) r} r^{\cmn(\beta^{-1} \alpha) -1}.
\end{equation*}
\end{lemma}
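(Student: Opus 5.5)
We would begin by reducing to the case $\beta = \e$ through a change of variables at the level of exponents. Write $\gamma = \beta^{-1}\alpha$, $\mu = \mn(\gamma)$, $m = \cmn(\gamma)$, and $J = \{j: \gamma_j = \mu\}$, so that $\card J = m$. Then
$$
(\kappa,\alpha) = \mu(\kappa,\beta) + \sum_{j \notin J} (\gamma_j - \mu)\beta_j \kappa_j .
$$
Put $\epsilon = \min_{j \notin J}(\gamma_j - \mu)\beta_j > 0$ (if $J \ne \{1,\ldots,d\}$). The sum in (1.2.1) becomes
$$
\sum_{\kappa: (\kappa,\beta) > r} 2^{-\mu(\kappa,\beta)}\, 2^{-\sum_{j\notin J}(\gamma_j-\mu)\beta_j\kappa_j}.
$$

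\emph{Upper bound.} Split $\kappa = (\kappa^J, \kappa^{J'})$ with $J' = \{1,\ldots,d\}\setminus J$. For fixed $\kappa^{J'}$, set $s = (\kappa^{J'},\beta^{J'})$. The factor coming from $J'$ is then at most $2^{-\sum_{j \in J'}(\gamma_j-\mu)\beta_j\kappa_j}$, which is summable over $\kappa^{J'}$ with geometric decay. It remains to bound, for a threshold $R = r - s$, the inner sum
$$
S_J(R) = \sum_{\kappa^J:\ (\kappa^J,\beta^J) > R} 2^{-\mu(\kappa^J,\beta^J)} .
$$
To do this, group the $\kappa^J$ by the value of $(\kappa^J,\beta^J)$ in unit intervals $[n, n+1)$, $n \ge R$. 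The number of $\kappa^J \in \Z_+^m$ with $(\kappa^J,\beta^J) < n+1$ is $\le c\,(n+1)^m$, and the number in the slab $[n,n+1)$ is $\le c\,(n+1)^{m-1}$. This is a volume count for a simplex layer of width $1$, with $\beta_j$ bounded below. Hence
$$
S_J(R) \le c \sum_{n \ge R_+} (n+1)^{m-1} 2^{-\mu n} \le c'\, 2^{-\mu R}(R_+ + 1)^{m-1}.
$$
Multiplying by $2^{-\mu s}$ gives $2^{-\mu r}$. We use $(r - s)_+ + 1 \le r + 1 \le 2r$ for $r \in \N$, and the remaining factor $\sum_{\kappa^{J'}} 2^{-\sum_{j\in J'}(\gamma_j-\mu)\beta_j\kappa_j}$ is finite. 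Note that this uses the terms with $s > r$ as well, where the inner sum is over all of $\Z_+^m$ and is bounded by $c\,2^{-\mu(r-s)}$ times a constant. Together these yield the right inequality in (1.2.1).

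\emph{Lower bound.} Restrict to $\kappa$ with $\kappa^{J'} = 0$ and $r < (\kappa^J,\beta^J) \le r + b$, where $b = 2\mx(\beta) + 1$, say. On this set each term is $\ge 2^{-\mu(r+b)}$. The number of lattice points $\kappa^J \in \Z_+^m$ with $(\kappa^J,\beta^J)$ in the interval $(r, r+b]$ is $\ge c\, r^{m-1}$ for $r \ge 1$. To see this, fix the first $m-1$ coordinates in the box $0 \le \kappa_j \le r/(m\,\mx(\beta))$; there are $\asymp r^{m-1}$ such choices. For each one, the last coordinate can be chosen so that the sum lands in the interval, because consecutive values differ by $\beta_{j_m} < b$ and the partial sum is $\le r(m-1)/m \le r$. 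When $m = 1$ the claim is trivial: one point suffices. This gives the left inequality with $c_1 = c\,2^{-\mu b}$.

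The main obstacle is the lattice-point counting in the slab, both the upper count $(n+1)^{m-1}$ and the lower count $r^{m-1}$. These need to be done carefully, with constants depending only on $\alpha$ and $\beta$. For the upper count we would compare with volumes of unit cubes $\kappa + I^m$, whose images lie in the enlarged slab $[n - \mx(\beta)m,\ n+1+\mx(\beta)m)$ of the simplex. That slab has volume $\le c\,(n+1)^{m-1}$, which bounds the number of cubes and hence the number of lattice points.
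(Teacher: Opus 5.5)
Your argument is correct. The paper gives no proof of (1.2.1) of its own; it only refers to [11]. So there is no in-paper route to compare against, and what you have written is a complete, self-contained proof.

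\emph{Upper bound.} The identity
$(\kappa,\alpha)=\mu(\kappa,\beta)+\sum_{j\notin J}(\gamma_j-\mu)\beta_j\kappa_j$
pushes the non-minimal directions into a convergent geometric factor. The estimate
$S_J(R)\le c\,2^{-\mu R}(R_++1)^{m-1}$
also holds for $R<0$, since there $S_J$ is a fixed constant and $2^{-\mu R}\ge 1$. Combined with $(r-s)_+\le r$, this gives the right-hand inequality.

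\emph{Lower bound.} Restricting to $\kappa^{J'}=0$ and explicitly constructing $\gtrsim r^{m-1}$ lattice points with $(\kappa^J,\beta^J)\in(r,r+b]$ is sound. The partial sum is at most $r$, and the step $\beta_{j_m}$ is smaller than $b$.

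\emph{Slab count.} Comparing with the unit cubes $\kappa+I^m$ inside the enlarged slab of the simplex in $\R_+^m$ correctly gives the count $\le c\,(n+1)^{m-1}$, with constants depending only on $\beta$.

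\emph{Corrections.}
\begin{itemize}
\item Since $R=r-s$ is generally not an integer, the unit intervals $[n,n+1)$ must start at $n=\lfloor R_+\rfloor$, not at $n\ge R_+$. This only changes the constant by a factor $2^{\mu}$.
\item Your opening sentence says you reduce to $\beta=\e$, but you never do; you just rewrite the exponent.
\item The quantity $\epsilon=\min_{j\notin J}(\gamma_j-\mu)\beta_j$ is never used.
\end{itemize}
None of these is a gap.
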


\begin{lemma} \label{l1.2.3}

Let $ \beta \in \R_+^d, \alpha \in \R^d $ and $ \mx(\beta^{-1} \alpha) >0. $ Then there exist constants $ c_3(\alpha,\beta)  >0 $ and $ c_4(\alpha,\beta) >0 $ such that
for $ r \in \N $ the inequalities hold
\begin{equation*} \tag{1.2.2}
c_3 2^{ \mx(\beta^{-1} \alpha) r} r^{ \cmx(\beta^{-1} \alpha) -1}
\le \sum_{ \kappa \in \Z_+^d: (\kappa, \beta) \le r}
2^{(\kappa, \alpha)} \le
c_4 2^{ \mx(\beta^{-1} \alpha) r} r^{ \cmx(\beta^{-1} \alpha) -1}.
\end{equation*}
\end{lemma}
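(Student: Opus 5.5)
We prove (1.2.2) by reducing it to Lemma 1.2.2 through a change of summation variables, together with a direct count of lattice points near the extremal face. Write $\gamma = \beta^{-1}\alpha$, $M = \mx(\gamma) > 0$, $k = \cmx(\gamma)$, and let $J = \{j: \gamma_j = M\}$, so $\card J = k$. For each term $2^{(\kappa,\alpha)} = 2^{\sum_j \gamma_j \beta_j \kappa_j}$, put $y_j = \beta_j \kappa_j \ge 0$. Then
$$
(\kappa,\alpha) = M(\kappa,\beta) - \sum_{j} (M-\gamma_j)\beta_j\kappa_j .
$$
Hence, for $(\kappa,\beta)\le r$,
$$
2^{(\kappa,\alpha)} = 2^{Mr}\, 2^{-M(r-(\kappa,\beta))}\, 2^{-\sum_{j\notin J}(M-\gamma_j)\beta_j \kappa_j},
$$
where the exponents $M-\gamma_j > 0$ for $j \notin J$. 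The plan is to bound the resulting sum above and below by $2^{Mr} r^{k-1}$.

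For the upper bound, we split $\kappa = (\kappa^J, \kappa^{J'})$ with $J' = \{1,\ldots,d\}\setminus J$. Summing over $\kappa^{J'}$ gives a geometric factor $\prod_{j\notin J}\sum_{\kappa_j\ge0}2^{-(M-\gamma_j)\beta_j\kappa_j} \le c$, since all these rates are positive. We then drop the constraint on $\kappa^{J'}$, keeping only $(\kappa^J,\beta^J)\le r$. It remains to bound $\sum_{\kappa^J:\,(\kappa^J,\beta^J)\le r} 2^{-M(r-(\kappa^J,\beta^J))}$. We decompose according to $n = \lfloor r - (\kappa^J,\beta^J)\rfloor \in \{0,\ldots,r\}$. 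The number of $\kappa^J\in\Z_+^k$ with $(\kappa^J,\beta^J)\in (r-n-1, r-n]$ is at most $c\,(r+1)^{k-1}$, a standard layer count: the slab has width $1$ and the relevant simplex has diameter $O(r)$. Then $\sum_n 2^{-Mn}c\,r^{k-1}\le c_4 r^{k-1}$, because $M>0$.

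For the lower bound, we take $\kappa^{J'}=0$ and retain only those $\kappa^J$ with $r - C \le (\kappa^J,\beta^J)\le r$ for a fixed constant $C \ge \mx(\beta)\cdot k$. On this set each term is at least $2^{Mr}2^{-MC}$. The number of such lattice points is at least $c\, r^{k-1}$ for $r \ge r_0$. To see this, choose $\kappa_{j}$ for $j\in J$ except the last one freely with $\sum \beta_j\kappa_j \le r/2$; this gives $\gtrsim r^{k-1}$ choices. Then choose the last coordinate as $\lfloor (r-\sum)/\beta_{j_k}\rfloor$, which lands the sum in $(r-\beta_{j_k}, r]$. For small $r$ (and for $k=1$) we use the single term $\kappa^J = \lfloor r/\beta_{j}\rfloor e_j$, which already gives $\ge c\,2^{Mr}$. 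Adjusting the constants covers every $r\in\N$.

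The main obstacle is the lattice-point counting in the thin slab $\{r-C<(\kappa^J,\beta^J)\le r\}$, which must hold uniformly in $r$ with constants depending only on $\beta$. The upper count follows by projecting onto the first $k-1$ coordinates of $J$: for each choice of those coordinates, at most $\lceil C/\beta_{j_k}\rceil+1$ values of the last coordinate are admissible, and the projected points lie in a box of size $O(r)^{k-1}$. The lower count is the explicit construction above. Alternatively, the upper bound can be obtained by applying Lemma 1.2.2 to the complementary sum; however, the complement of $\{(\kappa,\beta)\le r\}$ carries growing rather than decaying weights, so the direct argument is cleaner.
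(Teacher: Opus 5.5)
The paper does not prove this lemma; it refers to [11] for both (1.2.1) and (1.2.2). Your route is self-contained and elementary: isolate the extremal set $J$, factor out $2^{Mr}$, and count lattice points in unit-width slabs of the $k$-dimensional simplex.

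\textbf{Lower bound.} This part is correct as written. Take $\kappa^{J'}=0$ and choose the last coordinate as $\lfloor (r-\Sigma)/\beta_{j_k}\rfloor$. This gives at least $c\,r^{k-1}$ distinct points with $(\kappa^J,\beta^J)\in(r-\beta_{j_k},r]$, each contributing at least $2^{M(r-\mx(\beta))}$. The count $\prod_i(\lfloor x_i\rfloor+1)\ge\prod_i x_i$ holds for every $r\ge 1$, so no separate small-$r$ case is needed. Also, despite your first sentence, Lemma 1.2.2 is never used.

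\textbf{Upper bound.} Here one step is wrong as written. The factor $2^{-M(r-(\kappa,\beta))}$ depends on $\kappa^{J'}$. Since $(\kappa,\beta)\ge(\kappa^J,\beta^J)$, you have $2^{-M(r-(\kappa,\beta))}\ge 2^{-M(r-(\kappa^J,\beta^J))}$. So replacing the first factor by the second and then summing $\kappa^{J'}$ freely is a \emph{lower} estimate, not an upper one. Indeed, if some $j\notin J$ has $\gamma_j\in(0,M)$, the actual weight $2^{M(\kappa^J,\beta^J)}2^{\gamma_j\beta_j\kappa_j}$ grows in $\kappa_j$. The constraint on $\kappa^{J'}$ therefore cannot simply be dropped.

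The repair is immediate:
\begin{itemize}
\item Fix $\kappa^{J'}$ with $(\kappa^{J'},\beta^{J'})\le r$ and put $r'=r-(\kappa^{J'},\beta^{J'})\le r$.
\item Write the term as $2^{Mr}\,2^{-\sum_{j\notin J}(M-\gamma_j)\beta_j\kappa_j}\,2^{-M(r'-(\kappa^J,\beta^J))}$.
\item Your layer count applies verbatim to real $r'$ (project onto $k-1$ coordinates of $J$; at most $1/\beta_{j_k}+1$ admissible values of the last one). It gives, uniformly in $r'\le r$, $\sum_{(\kappa^J,\beta^J)\le r'}2^{-M(r'-(\kappa^J,\beta^J))}\le c\,(r'+1)^{k-1}\le c\,(r+1)^{k-1}$.
\item Only then sum the convergent geometric factor over $\kappa^{J'}\in\Z_+^{d-k}$; its rates $(M-\gamma_j)\beta_j$ are positive.
\end{itemize}
With this change the proof of (1.2.2) is complete.
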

The proof of relations (1.2.1) and (1.2.2) is given in [11].

\subsection{}
In this subsection, we introduce spaces of piecewise-polynomial functions and operators on them, which are used to construct approximation tools for functions from the spaces under study.

Consider a system of partitions of unity on open sets, with the help of which approximation tools for functions from the spaces under study are constructed. For this, denote by $ \psi^{0} $ the characteristic function of the interval $ I, $ i.e., the function defined by
$$
\psi^{0}(x) = \begin{cases} 1, & \text{ for } x \in I; \\
0, & \text{ for } x \in \R \setminus I.
\end{cases}
$$
For $ m \in \N $, set
$$
\psi^{m}(x) = \int_I \psi^{m -1}(x -y) dy \ (\text{see } [12]),
$$
and for $ d \ge 2, m \in \Z_+^d $ define
$$
\psi^{m}(x) = \prod_{j=1}^d \psi^{m_j}(x_j), x =
(x_1,\ldots,x_d) \in \R^d.
$$

For $ m,n \in \Z^d: m \le n, $ denote
\begin{equation*} \tag{1.3.1}
\Nu_{m,n} = \{ \nu \in \Z^d: m \le \nu \le n \} =
\prod_{j=1}^d \Nu_{m_j,n_j}.
\end{equation*}

Based on the definitions, using induction, it is easy to verify the following properties of the functions $ \psi^{m}, m \in \Z_+^d. $

1) For $ m \in \Z_+^d $
$$
\sgn \psi^{m}(x) = \begin{cases} 1, \text{ for } x \in ((m +\e) I^d); \\
0, \text{ for } x \in \R^d \setminus ((m +\e) I^d),
\end{cases}
$$

2) for $ m \in \Z_+^d $ and for each $ \lambda \in
\Z_+^d(m) $ the (generalized) derivative $ \D^\lambda \psi^{m} \in L_\infty(\R^d), $

3) for $ m \in \Z_+^d $ for almost all $ x \in \R^d $
the equality holds
$$
\sum_{\nu \in \Z^d} \psi^{m}(x -\nu) =1,
$$

4) for $ m \in \N $ for all $ x \in \R $ (for $ m =0 $ for almost all $ x \in \R $)
the equality holds
\begin{equation*} \tag{1.3.2}
\psi^{m}(x) = \sum_{\mu \in \Nu_{0, m+1}} a_{\mu}^m
\psi^{m}(2x -\mu),
\end{equation*}
where $ a_\mu^m = 2^{-m} C_{m+1}^\mu. $
Using the Newton expansion for $ (1+1)^{m+1} $ and $ (-1+1)^{m+1}, $ it is easy to verify that for $ m \in \Z_+ $ the equalities hold
\begin{equation*} \tag{1.3.3}
\sum_{\mu \in \Nu_{0,m +1} \cap (2 \Z)} a_\mu^m =1,
\sum_{\mu \in \Nu_{0,m +1} \cap (2 \Z +1)} a_\mu^m =1.
\end{equation*}

For $ t \in \R^d $, let $ 2^t $ denote the vector $ 2^t = (2^{t_1}, \ldots, 2^{t_d}). $

For $ m,\kappa \in \Z_+^d, \nu \in \Z^d $, denote
$$
g_{\kappa, \nu}^{m}(x) = \psi^{m}(2^\kappa x -\nu) =
\prod_{j =1}^d \psi^{m_j}( 2^{\kappa_j} x_j -\nu_j), x \in \R^d.
$$
From the first of the above properties of the functions $ \psi^{m} $ it follows that for $ m,\kappa \in \Z_+^d, \nu \in \Z^d $ the support
\begin{equation*} \tag{1.3.4}
\supp g_{\kappa,\nu}^{m} =
2^{-\kappa} \nu +2^{-\kappa} (m +\e) \overline I^d.
\end{equation*}
For $ \kappa \in \Z_+^d, \nu \in \Z^d $, denote
\begin{equation*} \tag{1.3.5}
Q_{\kappa, \nu} = 2^{-\kappa} \nu +2^{-\kappa} I^d,
\overline Q_{\kappa, \nu} = 2^{-\kappa} \nu +2^{-\kappa} \overline I^d.
\end{equation*}

Let us note some useful properties of the supports of the functions $ g_{\kappa,\nu}^{m}. $

For $ m,\kappa \in \Z_+^d $ for each $ \nu^\prime \in \Z^d $
the equality holds
\begin{equation*} \tag{1.3.6}
\{ \nu \in \Z^d: Q_{\kappa, \nu^\prime} \cap
\supp g_{\kappa, \nu}^{m} \ne \emptyset\} = \nu^\prime +\Nu_{-m,0}.
\end{equation*}

From property 3) of the functions $ \psi^{m} $ it follows that for $ m, \kappa \in \Z_+^d $
for any open set $ U \subset \R^d $
for almost all $ x \in U $ the equality holds
\begin{equation*} \tag{1.3.7}
\sum_{ \nu \in \Z^d: \supp g_{\kappa, \nu}^{m} \cap U \ne \emptyset} g_{\kappa, \nu}^{m}(x) =1.
\end{equation*}

Keeping in mind property 2) of the functions $ \psi^{m}, $ note that for $ m,\kappa \in \Z_+^d, \nu \in \Z^d, \lambda \in
\Z_+^d(m) \text{ (see (1.1.1))}$
the equality holds
\begin{multline*} \tag{1.3.8}
\| \D^\lambda g_{\kappa, \nu}^{m} \|_{L_\infty(\R^d)} =
2^{(\kappa, \lambda)}
\| \D^\lambda \psi^{m} \|_{L_\infty(\R^d)} =
c_1(m,\lambda) 2^{(\kappa, \lambda)}.
\end{multline*}
Let us introduce the following spaces of piecewise-polynomial functions.
For $ l \in \Z_+^d, m \in \N^d, $ an open set $ U \subset \R^d $ and $ \kappa \in \Z_+^d, $ setting
\begin{equation*} \tag{1.3.9}
N_\kappa = N_\kappa^{m,U} = \{\nu \in \Z^d:
\supp g_{\kappa, \nu}^{m}
\cap U \ne \emptyset\},
\end{equation*}
let $ \mathcal P_\kappa^{l,m,U} $ denote the linear space consisting of functions $ f: \R^d \mapsto \R $ for each of which there exists a set of polynomials
$ \{f_\nu \in \mathcal P^{l}, \nu \in N_\kappa^{m,U}\} $ such that
for $ x \in \R^d $ the equality holds
\begin{equation*} \tag{1.3.10}
f(x) = \sum_{\nu \in N_\kappa^{m,U}} f_\nu(x) g_{\kappa,\nu}^{m}(x).
\end{equation*}

\begin{remark} \label{z2}
In the case when $ \card N_\kappa < \infty, $ no explanation is needed as to what is meant by the sum $ \sum_{\nu \in N_\kappa} f_\nu(x) g_{\kappa,\nu}^m(x). $
In this case, it is easy to verify that for $ l \in \Z_+^d,
m \in \N^d, \kappa \in \Z_+^d $ and a bounded open set $ U \subset \R^d $
the mapping that assigns to each set of polynomials $ \{f_\nu \in \mathcal P^{l}, \nu \in N_\kappa \} $ the function $ f $ defined by
equality (1.3.10) is an isomorphism of the direct product of $ \card N_\kappa $ copies of the space $ \mathcal P^{l} $ onto the space $ \mathcal P_\kappa^{l,m,U}. $

If $ \card N_\kappa = \infty, $ then taking an arbitrary bijective mapping $ \N \ni s \mapsto \nu^s \in N_\kappa, $ and
taking into account that for $ x \in \R^d $, due to (1.3.4),
$ \card \{s \in \N: g_{\kappa,\nu^s}^m(x) \ne 0\} < \infty, $ set
\begin{equation*}
\sum_{\nu \in N_\kappa} f_\nu(x) g_{\kappa,\nu}^m(x) =
\sum_{s =1}^\infty f_{\nu^s}(x) g_{\kappa,\nu^s}^m(x) =
\sum_{s \in \N: g_{\kappa,\nu^s}^m(x) \ne 0}
f_{\nu^s}(x) g_{\kappa,\nu^s}^m(x),
\end{equation*}
and for any compact set $ K \subset \R^d $ for $ x \in K $ the sum
$$
\sum_{\nu \in N_\kappa} f_\nu(x) g_{\kappa,\nu}^m(x) =
\sum_{\nu \in N_\kappa: K \cap \supp g_{\kappa,\nu}^m \ne \emptyset} f_\nu(x) g_{\kappa,\nu}^m(x),
$$
and the sum on the right-hand side of the last equality, in view of (1.3.4), contains a finite number of terms (functions summable on
$ K $), thus, the inclusion holds
$ \mathcal P_\kappa^{l,m,U}
\subset L_1^{\loc}(\R^d). $
\end{remark}
Based on (1.3.2), in the same way as Lemma 1.2.1 from [5], the following lemma is established.

\begin{lemma}\label{l1.3.1}
Let $ l \in \Z_+^d, m \in \N^d, U $ be an open set in $ \R^d. $ Then for $ \kappa \in \Z_+^d, j =1,\ldots,d $
the linear operator $ H_\kappa^{j,l,m,U}:
\mathcal P_\kappa^{l,m,U} \mapsto
\mathcal P_{\kappa +e_j}^{l,m,U}, $
whose value on a function $ f \in
\mathcal P_\kappa^{l,m,U} $ given by
equality (1.3.10) is defined by the relation
\begin{multline*} \tag{1.3.11}
(H_\kappa^{j,l,m,U} f)(x) = \\
\sum_{\nu \in N_{\kappa +e_j}^{m,U}}
\biggl(\sum_{\substack{\nu^\prime \in N_\kappa^{m,U}, \mu_j \in \Nu_{0, m_j +1}: \\
 2 \nu^\prime_j +\mu_j = \nu_j, \nu^\prime_i = \nu_i, i = 1,\ldots,d, i \ne j }} a_{\mu_j}^{m_j}
f_{\nu^\prime}(x)\biggr) g_{\kappa +e_j,\nu}^{m}(x), x \in \R^d,
\end{multline*}
has the property that for $ f \in
\mathcal P_\kappa^{l,m,U} $ the equality holds
\begin{equation*}
(H_\kappa^{j,l,m,U} f) \mid_{U} = f \mid_{U}.
\end{equation*}
\end{lemma}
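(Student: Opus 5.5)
The plan is to substitute the one-dimensional refinement relation (1.3.2) into the $j$-th factor of each $g_{\kappa,\nu'}^m$, regroup the resulting double sum by the fine index $\nu$, and then argue that discarding the indices $\nu\notin N_{\kappa+e_j}^{m,U}$ does not change the function on $U$.

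\textbf{Step 1: refinement of a single function.} Fix $f\in\mathcal P_\kappa^{l,m,U}$ represented as in (1.3.10). For each $\nu'\in N_\kappa^{m,U}$, apply (1.3.2) to the variable $t=2^{\kappa_j}x_j-\nu'_j$ with $m=m_j$; since $m_j\ge 1$, it holds for all $t$. We have $2t-\mu_j=2^{\kappa_j+1}x_j-(2\nu'_j+\mu_j)$, and the factors with $i\ne j$ are untouched. Hence for all $x\in\R^d$
$$
g_{\kappa,\nu'}^m(x)=\sum_{\mu_j\in\Nu_{0,m_j+1}} a_{\mu_j}^{m_j}\, g_{\kappa+e_j,\nu'+(\nu'_j+\mu_j)e_j}^m(x),
$$
where the fine index $\nu=\nu'+(\nu'_j+\mu_j)e_j$ satisfies $\nu_j=2\nu'_j+\mu_j$ and $\nu_i=\nu'_i$ for $i\ne j$.

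\textbf{Step 2: regrouping.} Multiply the identity of Step 1 by $f_{\nu'}(x)$ and sum over $\nu'$. At every point $x$ only finitely many terms are nonzero, by (1.3.4) and Remark 2, so the sums can be rearranged freely. Collecting the terms by the fine index $\nu$ gives
$$
f(x)=\sum_{\nu\in\Z^d}\Bigl(\sum_{\nu',\mu_j:\ 2\nu'_j+\mu_j=\nu_j,\ \nu'_i=\nu_i\ (i\ne j)} a^{m_j}_{\mu_j}f_{\nu'}(x)\Bigr) g_{\kappa+e_j,\nu}^m(x),
$$
with $\nu'$ ranging over $N_\kappa^{m,U}$. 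The inner coefficient is a finite sum of elements of $\mathcal P^l$, since at most $m_j+2$ pairs $(\nu',\mu_j)$ satisfy the constraint. The expression in (1.3.11) is exactly this sum restricted to $\nu\in N_{\kappa+e_j}^{m,U}$, so $H_\kappa^{j,l,m,U}f\in\mathcal P^{l,m,U}_{\kappa+e_j}$ and $H_\kappa^{j,l,m,U}$ is linear.

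\textbf{Step 3: the discarded terms vanish on $U$.} If $\nu\notin N_{\kappa+e_j}^{m,U}$, then by (1.3.9) $\supp g^m_{\kappa+e_j,\nu}\cap U=\emptyset$, so that term is zero on $U$. Therefore $(H_\kappa^{j,l,m,U}f)(x)=f(x)$ for every $x\in U$.

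\textbf{Main obstacle: well-definedness.} Formula (1.3.11) is written in terms of the polynomials $f_{\nu'}$, so one must check that it depends only on $f$ and not on the chosen representation. When $\card N_\kappa<\infty$, Remark 2 shows that the representation is unique, and nothing more is needed. In the general case I would prove uniqueness locally. On any cell $Q_{\kappa,\nu''}$, (1.3.6) shows that $f$ equals $\sum_{\nu\in\nu''+\Nu_{-m,0}} f_\nu g^m_{\kappa,\nu}$, and the isomorphism argument of Remark 2, applied to a bounded neighbourhood, determines the $f_\nu$ from $f$ on a bounded open set.

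A simpler route avoids this issue: treat $H$ as defined on the coefficient families $\{f_\nu\}$. Steps 1–3 then show that its value, restricted to $U$, equals $f$ regardless of which family is chosen. This is what the cited Lemma 1.2.1 of [5] presumably does.
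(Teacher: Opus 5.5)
Your Steps 1--3 are correct and follow the argument the paper intends; the paper says only that the lemma follows from (1.3.2) in the same way as Lemma 1.2.1 of [5]. You refine the $j$-th factor of each $g^m_{\kappa,\nu'}$ by (1.3.2), regroup by the fine index using pointwise finiteness, and discard the indices $\nu\notin N^{m,U}_{\kappa+e_j}$, whose functions vanish on $U$. If $H_\kappa^{j,l,m,U}$ is read as acting on the chosen coefficient family $\{f_{\nu'}\}$ (your ``simpler route''), this proves the lemma.

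What must be dropped is your first resolution of the ``main obstacle'': the claim that for infinite $N_\kappa$ the coefficients are determined by $f$ locally. That claim is false.

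\emph{Why it fails locally.} Once $l\ne 0$, the products $p\,g^m_{\kappa,\nu}$ with $p\in\mathcal P^l$ are linearly dependent on a single cell. For $d=1$, $m=l=1$, $\kappa=0$ one has $(1-x)\psi^1(x)-x\,\psi^1(x+1)=0$ on $(0,1)$. The injectivity in Remark 2 comes from a peeling argument. It starts from an extremal index, whose lower corner cell carries only one active $g^m_{\kappa,\nu}$. That argument needs finitely many indices and knowledge of $f$ on all of $\R^d$, so it does not localize to a bounded neighbourhood.

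\emph{Why it fails globally.} For $d=1$ and $l\ge1$ one has $\sum_{\nu\in\Z}(x-\nu-\tfrac{m+1}{2})\psi^m(x-\nu)\equiv 0$. So for $U=\R$ the zero function has nontrivial representations.

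\emph{Why this matters for $H$.} The dependence of (1.3.11) on the representation is genuine. Take $d=2$, $m=l=(1,1)$, $\kappa=0$, $j=1$, and $U=((3/2,19/10)\times\R)\cup((1/10,2/5)\times(0,\infty))$. Let $f_{(0,\nu_2)}(x)=x_2-\nu_2-1$ for $\nu_2\in\Z$, with all other coefficients $0$. This family represents $f\equiv 0$, yet (1.3.11) gives the value $-1/16$ at $(1/4,-1/2)$, whereas the zero family gives $0$.

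This is exactly why the paper warns that $H_\kappa^{j,l,m,U}$ may be multivalued for unbounded $U$. The lemma is a statement about each representation, and only your fallback reading is valid.
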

Note that if in the formulation of Lemma \ref{l1.3.1} the set $ U $ is not bounded, then the operator $ H_\kappa^{j,l,m,U} $ may be multivalued.

Below, the following objects will be needed.

For $ m \in \N^d, \epsilon \in \Upsilon^d, \nu \in
\Z^d $, denote by $ \M_{\epsilon}^m(\nu) $ the set of collections of numbers
\begin{multline*} \tag{1.3.12}
\M_{\epsilon}^m(\nu) = \{ \m^{\epsilon} = \{ \m_j \in \Nu_{0, m_j +1},
j \in \s(\epsilon)\}: \\
(\nu_j -\m_j) /2 \in \Z \ \forall j \in \s(\epsilon)\} = \\
\prod_{j \in \s(\epsilon)} \{ \m_j \in \Nu_{0, m_j +1}: (\nu_j -\m_j) /2 \in \Z\} =
\prod_{j \in \s(\epsilon)} \M_1^{m_j}(\nu_j),
\end{multline*}
and to each pair $ \nu \in \Z^d, \m^{\epsilon} \in
\M_{\epsilon}^m(\nu) $ associate $ \n_{\epsilon}(\nu,\m^{\epsilon}) \in \Z^d, $ by setting
\begin{equation*} \tag{1.3.13}
(\n_{\epsilon}(\nu,\m^{\epsilon}))_j = \begin{cases} (\nu_j -\m_j) /2,
j \in \s(\epsilon); \\
\nu_j, j \in \{1,\ldots,d\} \setminus \s(\epsilon).
\end{cases}
\end{equation*}

The following fact will be useful later. For $ m \in \N^d $ for
$ \nu \in \Z^d, \epsilon, \epsilon^\prime \in
\Upsilon^d: \s(\epsilon) \cap \s(\epsilon^\prime) = \emptyset, $
and any $ \m^{\epsilon} \in \M_{\epsilon}^m(\nu),
\m^{\epsilon^\prime} \in \M_{\epsilon^\prime}^m(\nu), \m^{\epsilon} * \m^{\epsilon^\prime} =
\m^{\epsilon +\epsilon^\prime} \in \M_{\epsilon +\epsilon^\prime}^m(\nu), $
with components defined by the relation
$$
(\m^{\epsilon} * \m^{\epsilon^\prime})_j = \begin{cases}
(\m^{\epsilon})_j, j \in \s(\epsilon); \\
(\m^{\epsilon^\prime})_j, j \in \s(\epsilon^\prime),
\end{cases}
$$
the equality holds
\begin{equation*} \tag{1.3.14}
\n_{\epsilon +\epsilon^\prime}(\nu, \m^{\epsilon} *
\m^{\epsilon^\prime}) =
\n_{\epsilon^\prime}(\n_{\epsilon}(\nu, \m^{\epsilon}), \m^{\epsilon^\prime}).
\end{equation*}

\begin{remark} \label{z3}
For $ m \in \N^d $, $ \kappa \in \Z_+^d,
\epsilon \in \Upsilon^d: \s(\epsilon) \subset
\s(\kappa), $ for an open set $ U \subset \R^d $ and $ \nu \in
N_{\kappa}^{m,U}, \m^{\epsilon} \in
\M_{\epsilon}^m(\nu) $ the inclusion holds (see [10])
\begin{equation*} \tag{1.3.15}
\n_{\epsilon}(\nu,\m^{\epsilon}) \in
N_{\kappa -\epsilon}^{m,U}.
\end{equation*}
\end{remark}
For the formulation of Lemma \ref{l1.3.2}, for $ j \in
\{1,\ldots,d\} $ denote by $ \eta^j: \R^d \times \R^d \mapsto \R^d $ the mapping defined by
$$
(\eta^j(\xi,x))_i = \begin{cases} \xi_i, i =1, \ldots, j; \\
x_i, i =j +1, \ldots, d,
\end{cases} \xi,x \in \R^d.
$$
Note also that Lemma \ref{l1.3.2} is established using Lemma \ref{l1.3.1}, and its proof simplified repeats the proof of Lemma 1.2.2 from [5].

\begin{lemma}\label{l1.3.2}
Let $ l \in \Z_+^d, m \in \N^d, U $ be an open set in $ \R^d $ and $ \kappa \in \Z_+^d, \epsilon \in
\Upsilon^d: \s(\epsilon) \subset \s(\kappa). $ Then the linear operator $ H_{\kappa, \kappa -\epsilon}^{l,m,U}:
\mathcal P_{\kappa -\epsilon}^{l,m,U} \mapsto
\mathcal P_\kappa^{l,m,U}, $
whose value for $ f \in \mathcal P_{\kappa -\epsilon}^{l,m,U} $ is defined by
\begin{equation*} \tag{1.3.16}
H_{\kappa, \kappa -\epsilon}^{l,m,U} f = \begin{cases} f, \text{ for }
\epsilon =0; \\
(\prod_{j \in \s(\epsilon)} H_{\eta^j(\kappa -\epsilon, \kappa)}^{j,l,m,U}) f,
\text{ for } \epsilon \ne 0, (\text{ see } (1.3.11)),
\end{cases}
\end{equation*}
possesses the following properties:

1) for $ f \in \mathcal P_{\kappa -\epsilon}^{l,m,U} $ the equality holds
\begin{equation*} \tag{1.3.17}
(H_{\kappa, \kappa -\epsilon}^{l,m,U} f) \mid_{U} =
f \mid_{U};
\end{equation*}

2) for $ f \in \mathcal P_{\kappa -\epsilon}^{l,m,U} $ of the form
\begin{equation*} \tag{1.3.18}
f = \sum_{\nu^\prime \in N_{\kappa -\epsilon}^{m,U}}
f_{\kappa -\epsilon, \nu^\prime} g_{\kappa -\epsilon, \nu^\prime}^{m},
\{f_{\kappa -\epsilon, \nu^\prime} \in \mathcal P^{l},
\nu^\prime \in N_{\kappa -\epsilon}^{m,U}\},
\end{equation*}
the representation holds
\begin{equation*} \tag{1.3.19}
H_{\kappa, \kappa -\epsilon}^{l,m,U} f = \sum_{\nu \in N_{\kappa}^{m,U}}
f_{\kappa,\nu} g_{\kappa,\nu}^{m},
\end{equation*}
where
\begin{equation*} \tag{1.3.20}
f_{\kappa,\nu} = \sum_{\m^{\epsilon} \in
\M_{\epsilon}^m(\nu)} A_{\m^{\epsilon}}^m
f_{\kappa -\epsilon, \n_{\epsilon}(\nu,\m^{\epsilon})}, \text{ (see (1.3.15))}
\end{equation*}
and
\begin{equation*} \tag{1.3.21}
A_{\m^{\epsilon}}^m = \prod_{i \in \s(\epsilon)}
a_{\m_i}^{m_i}, (\text{ see } (1.3.2)), \m^{\epsilon} \in \M_{\epsilon}^m(\nu),
\nu \in N_{\kappa}^{m,U}.
\end{equation*}
\end{lemma}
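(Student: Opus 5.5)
I will prove the lemma by induction on $ \card \s(\epsilon) $, using Lemma \ref{l1.3.1} for property 1) and the explicit formula (1.3.11) for property 2).

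\textbf{Base cases.} For $ \epsilon = 0 $ both properties are trivial. In this case $ \M_0^m(\nu) $ consists of the single empty collection, $ A = 1 $, and $ \n_0(\nu,\cdot) = \nu $.

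\textbf{Property 1).} For $ \epsilon \ne 0 $ write $ \s(\epsilon) = \{j_1 < \ldots < j_k\} $. The operator (1.3.16) is the composition, applied successively in the order $ j_1, \ldots, j_k $, of the operators $ H^{j,l,m,U}_{\eta^j(\kappa -\epsilon,\kappa)} $. Fix $ j \in \s(\epsilon) $. Because $ \epsilon_i = 0 $ for $ i \notin \s(\epsilon) $, the vector $ \eta^j(\kappa-\epsilon,\kappa) $ has components $ \kappa_i - \epsilon_i $ for $ i \le j $ and $ \kappa_i $ for $ i > j $. Hence $ \eta^j(\kappa-\epsilon,\kappa) + e_j $ equals $ \eta^{j'}(\kappa-\epsilon,\kappa) $, where $ j' $ is the next index of $ \s(\epsilon) $ below $ j $, or equals $ \kappa $ if $ j = j_1 $. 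One should check carefully which order of composition the product notation intends, so that each factor maps $ \mathcal P_{\kappa'}^{l,m,U} $ into $ \mathcal P_{\kappa'+e_j}^{l,m,U} $ and the chain runs from $ \kappa-\epsilon $ up to $ \kappa $. Once the indices chain correctly, each factor preserves restrictions to $ U $ by Lemma \ref{l1.3.1}, and composing these equalities gives (1.3.17).

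\textbf{Property 2), single step.} First treat $ \epsilon = e_j $. Reading off (1.3.11), the new coefficient at $ \nu \in N_{\kappa}^{m,U} $ is
$$ \sum a_{\mu_j}^{m_j} f_{\nu'}, $$
summed over $ \mu_j \in \Nu_{0,m_j+1} $ with $ \nu'_j = (\nu_j - \mu_j)/2 \in \Z $ and $ \nu'_i = \nu_i $ for $ i \ne j $. Writing $ \mu_j = \m_j $, this index set is exactly $ \M_{e_j}^m(\nu) $ and $ \nu' = \n_{e_j}(\nu,\m) $. Remark \ref{z3}, i.e. (1.3.15), guarantees that $ \nu' \in N_{\kappa-e_j}^{m,U} $, so the constraint $ \nu' \in N $ in (1.3.11) is automatically satisfied. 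This gives (1.3.20) with $ A = a_{\m_j}^{m_j} $.

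\textbf{Property 2), induction step.} Split $ \epsilon = \epsilon' + e_j $, where $ j $ is the last index applied. By the inductive hypothesis the intermediate function has coefficients
$$ f_{\kappa - e_j,\nu'} = \sum_{\m^{\epsilon'} \in \M_{\epsilon'}^m(\nu')} A_{\m^{\epsilon'}}^m f_{\kappa-\epsilon,\, \n_{\epsilon'}(\nu',\m^{\epsilon'})}. $$
Substitute this into the one-step formula with $ \nu' = \n_{e_j}(\nu,\m_j) $. Since $ \epsilon' $ acts only on coordinates $ i \ne j $, we have $ \nu'_i = \nu_i $ on $ \s(\epsilon') $, and therefore $ \M_{\epsilon'}^m(\nu') = \M_{\epsilon'}^m(\nu) $. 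The double sum is then over the product set $ \M_{e_j}^m(\nu) \times \M_{\epsilon'}^m(\nu) = \M_\epsilon^m(\nu) $, using the product structure (1.3.12) and the $ * $-concatenation. The coefficients multiply to $ A_{\m^\epsilon}^m $ by (1.3.21). The index composition collapses by (1.3.14), with the roles of $ \epsilon $ and $ \epsilon' $ matched to the order of application; since the coordinate sets are disjoint, the operations commute and either order works.

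\textbf{Main obstacle.} Only two points need care. The first is the bookkeeping of the order of the factors in (1.3.16) against the $ \eta^j $ indexing. The second is handling the case of unbounded $ U $, where $ \card N_\kappa $ may be infinite. There the sums are understood pointwise as in Remark \ref{z2}: all sums are locally finite, so the interchange of summations is legitimate pointwise.
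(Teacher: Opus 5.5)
Your proof is correct and follows the route the paper indicates; the paper itself gives no proof and only says the lemma follows from Lemma \ref{l1.3.1} by a simplified repetition of Lemma 1.2.2 of [5]. That route is to iterate Lemma \ref{l1.3.1} for 1), and for 2) to read the one-step coefficients off (1.3.11) using (1.3.15), then induct on $\card \s(\epsilon)$ via the product structure (1.3.12), (1.3.21) and (1.3.14).

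The one slip is the order of application you state. Since $\eta^{j_k}(\kappa-\epsilon,\kappa)=\kappa-\epsilon$ and $\eta^{j_1}(\kappa-\epsilon,\kappa)+e_{j_1}=\kappa$, your own chain relation shows the factors act in decreasing order $j_k,\ldots,j_1$, not $j_1,\ldots,j_k$. So in the induction step the last factor applied is $H^{j_1,l,m,U}_{\kappa-e_{j_1}}$ with $j_1=\min\s(\epsilon)$. The remaining factors are then exactly $H^{l,m,U}_{\kappa-e_{j_1},\kappa-\epsilon}$, because $\eta^i(\kappa-\epsilon,\kappa-e_{j_1})=\eta^i(\kappa-\epsilon,\kappa)$ for $i>j_1$ and $\s(\epsilon-e_{j_1})\subset\s(\kappa-e_{j_1})$; this is what licenses the inductive hypothesis.
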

Let us also note a special case of Lemma 1.2.3 from [5] (see (1.3.3), (1.3.21)).

\begin{lemma} \label{l1.3.3}
For $ \nu \in \Z^d, \epsilon \in \Upsilon^d, m \in
\N^d $ the equality holds
\begin{equation*} \tag{1.3.22}
\sum_{\m^{\epsilon} \in \M_{\epsilon}^m(\nu)}
A_{\m^{\epsilon}}^m =1.
\end{equation*}
\end{lemma}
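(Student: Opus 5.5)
The plan is to use the product structure of $\M_{\epsilon}^m(\nu)$ and $A_{\m^{\epsilon}}^m$ to reduce (1.3.22) to a one-dimensional identity. That identity then follows from (1.3.3).

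\emph{Reduction.} By (1.3.12),
$$\M_{\epsilon}^m(\nu)=\prod_{j\in\s(\epsilon)}\M_1^{m_j}(\nu_j),$$
and by (1.3.21) the coefficient $A_{\m^{\epsilon}}^m=\prod_{i\in\s(\epsilon)}a_{\m_i}^{m_i}$ factors over the same coordinates. Hence the sum over the Cartesian product factors into a product of one-dimensional sums:
$$\sum_{\m^{\epsilon}\in\M_{\epsilon}^m(\nu)}A_{\m^{\epsilon}}^m=\prod_{j\in\s(\epsilon)}\ \sum_{\m_j\in\M_1^{m_j}(\nu_j)}a_{\m_j}^{m_j}.$$
When $\epsilon=0$, the index set consists of the single empty collection and the empty product equals $1$. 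This degenerate case should be stated explicitly so that it is covered by convention.

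\emph{One-dimensional identity.} Fix $j\in\s(\epsilon)$. The set $\M_1^{m_j}(\nu_j)$ consists of the $\mu\in\Nu_{0,m_j+1}$ with $\mu\equiv\nu_j \pmod 2$. So it equals $\Nu_{0,m_j+1}\cap(2\Z)$ if $\nu_j$ is even, and $\Nu_{0,m_j+1}\cap(2\Z+1)$ if $\nu_j$ is odd.

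Each of these sums equals $1$ by (1.3.3), applied with $m=m_j\in\N\subset\Z_+$. For completeness, (1.3.3) itself comes from $a_\mu^{m}=2^{-m}C_{m+1}^\mu$: adding and subtracting the binomial expansions of $(1+1)^{m+1}=2^{m+1}$ and $(1-1)^{m+1}=0$ shows that the even-index and odd-index binomial sums each equal $2^m$.

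Multiplying the one-dimensional identities over $j\in\s(\epsilon)$ gives (1.3.22). There is no genuine obstacle here. The only points needing care are the parity bookkeeping in identifying $\M_1^{m_j}(\nu_j)$ with the even or odd indices, and the convention for $\epsilon=0$.
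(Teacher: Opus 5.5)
Your proposal is correct and takes essentially the same route the paper indicates: it factors the sum over the product set via (1.3.12) and (1.3.21), and then applies the even/odd identities (1.3.3) in each coordinate $j \in \s(\epsilon)$. The empty-product convention correctly handles $\epsilon = 0$.
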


\section{Upper estimate of the best accuracy of approximation in $ L_q(D) $ of the operator $ \D^\lambda $
by bounded operators acting from $ L_s(D) $ into $ L_q(D) $, on classes $ (\mathcal S_{p,\theta}^\alpha \mathcal B)^\prime(D) $}

\subsection{} In this subsection, approximation tools on some open subsets of the domains of definition of functions from the spaces under consideration will be constructed, which satisfy metric relations useful for us.

For $ l, \kappa \in \Z_+^d, \nu \in \Z^d $, define the linear operator $ S_{\kappa,\nu}^{l}:
L_1(Q_{\kappa,\nu}) \mapsto \mathcal P^{l}, $
by setting $ S_{\kappa, \nu}^{l} = P_{\delta, x^0}^{l} $ for $ \delta = 2^{-\kappa}, x^0 = 2^{-\kappa} \nu $ (see Lemma \ref{l1.1.1} and (1.3.5)).

For an open set $ D \subset \R^d $, denote by $ \tilde L_1^{\loc}(D) $ the space of all functions $ f \in L_1^{\loc}(D) $ for which for any $ \kappa \in \Z_+^d, \nu \in \Z^d $ such that $ Q_{\kappa,\nu} \subset D, $ the inclusion $ f \mid_{Q_{\kappa,\nu}} \in L_1(Q_{\kappa,\nu}) $ holds.
Note that in the situation where $ Q_{\kappa,\nu} \subset D, $ for $ f \in \tilde L_1^{\loc}(D) $ we will write $ S_{\kappa, \nu}^{l} f $ instead of $ S_{\kappa, \nu}^{l}(f \mid_{Q_{\kappa, \nu}}). $

For a domain $ D \subset \R^d, $ its open subset $ U \subset D $ and $ \kappa \in \Z_+^d $ such that the set
\begin{equation*} \tag{2.1.1}
\{\nu^\prime \in \Z^d: Q_{\kappa,\nu^\prime} \subset D\} \ne \emptyset,
\end{equation*}
and $ m \in \N^d, $ fix some mapping
\begin{equation*} \tag{2.1.2}
\bm \nu_\kappa: N_\kappa^{m,U} \ni \nu \mapsto
\bm \nu_\kappa(\nu) \in \{\nu^\prime \in \Z^d:
Q_{\kappa,\nu^\prime} \subset D\} \text{ (see } (1.3.9)),
\end{equation*}
and for $ l \in \Z_+^d $ define the linear operator $ E_\kappa^{l,m,D,U,\bm \nu_\kappa}:
\tilde L_1^{\loc}(D) \mapsto
\mathcal P_\kappa^{l,m,U} $ (see Subsection 1.3.) by the equality
\begin{equation*} \tag{2.1.3}
E_\kappa^{l,m,D,U,\bm \nu_\kappa} f = \sum_{\nu \in N_\kappa^{m,U}}
(S_{\kappa, \bm \nu_\kappa(\nu)}^{l} f ) g_{\kappa, \nu}^{m},
f \in \tilde L_1^{\loc}(D).
\end{equation*}

\begin{remark}\label{z4}
If for a domain $ D \subset \R^d $ and $ \kappa^0
\in \Z_+^d $, (2.1.1) holds with $ \kappa^0 $ instead of $ \kappa, $
then for $ \kappa \in \Z_+^d $, (2.1.1) holds with $ \kappa^0 +\kappa $ instead of $ \kappa. $
\end{remark}
The following statement will be needed later.

\begin{lemma} \label{l2.1.1}
Let $ \lambda \in \Z_+^d, D $ be a domain in $ \R^d $ and a function $ f \in C^\infty(D), $ and $ g \in
L_1^{\loc}(D), $ and for each $ \mu \in \Z_+^d(\lambda) $ (see (1.1.1)) the generalized derivative $ \D^\mu g \in L_1^{\loc}(D). $ Then in the space of generalized functions in the domain $ D $ the relation holds
\begin{equation*} \tag{2.1.4}
\D^\lambda (fg) = \sum_{ \mu \in \Z_+^d(\lambda)}
C_\lambda^\mu \D^{\lambda -\mu} f \D^\mu g \in
\L_1^{\loc}(D).
\end{equation*}
\end{lemma}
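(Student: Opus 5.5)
We prove (2.1.4) by induction on $|\lambda|$, which reduces everything to the case of a single first-order derivative. For $\lambda = 0$ there is nothing to prove: $fg \in L_1^{\loc}(D)$ because $f$ is bounded on compact subsets of $D$.

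The basic step is the following claim: if $f \in C^\infty(D)$, $g \in L_1^{\loc}(D)$ and $\D^{e_j} g \in L_1^{\loc}(D)$, then
$$\D^{e_j}(fg) = (\D^{e_j} f)\, g + f\, \D^{e_j} g$$
in the generalized sense. To see this, take $\varphi \in C_0^\infty(D)$. Then $f\varphi \in C_0^\infty(D)$ as well, since $\supp(f\varphi) \subset \supp \varphi$ is a compact subset of $D$. By the definition of the generalized derivative of $g$,
$$\int_D g\, \D^{e_j}(f\varphi)\,dx = -\int_D (\D^{e_j} g)\, f\varphi\,dx.$$
Expanding $\D^{e_j}(f\varphi) = (\D^{e_j} f)\varphi + f\, \D^{e_j}\varphi$ (classical Leibniz rule for smooth functions) and rearranging gives
$$\int_D fg\, \D^{e_j}\varphi\,dx = -\int_D \bigl((\D^{e_j} f)g + f \D^{e_j} g\bigr)\varphi\,dx,$$
which is exactly the claim. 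Both summands on the right lie in $L_1^{\loc}(D)$, because smooth functions are bounded on compact sets.

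For the inductive step, let $\lambda \ne 0$ and pick $j$ with $\lambda_j \ge 1$. Write $\lambda = \lambda' + e_j$; note that $\Z_+^d(\lambda') \subset \Z_+^d(\lambda)$, so all hypotheses transfer to $\lambda'$. The induction hypothesis gives
$$\D^{\lambda'}(fg) = \sum_{\mu \in \Z_+^d(\lambda')} C_{\lambda'}^{\mu}\, \D^{\lambda'-\mu} f\, \D^{\mu} g.$$
Generalized derivatives commute, so $\D^\lambda(fg) = \D^{e_j}\D^{\lambda'}(fg)$. We apply the basic step to each term, with the smooth function $\D^{\lambda'-\mu} f$ in place of $f$ and $\D^\mu g$ in place of $g$. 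This is legitimate because $\D^{e_j}\D^\mu g = \D^{\mu+e_j} g$, and $\mu + e_j \in \Z_+^d(\lambda)$, so by hypothesis $\D^{\mu+e_j} g \in L_1^{\loc}(D)$. We obtain
$$\D^\lambda(fg) = \sum_{\mu \in \Z_+^d(\lambda')} C_{\lambda'}^{\mu}\bigl(\D^{\lambda-\mu} f\, \D^\mu g + \D^{\lambda'-\mu} f\, \D^{\mu+e_j} g\bigr).$$

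The only real work is recombining these coefficients. Since $C_\lambda^\mu = \prod_i C_{\lambda_i}^{\mu_i}$ and only the $j$-th factor changes, Pascal's identity in that coordinate,
$$C_{\lambda_j-1}^{\mu_j} + C_{\lambda_j-1}^{\mu_j-1} = C_{\lambda_j}^{\mu_j},$$
with the convention that binomials with out-of-range indices vanish, shows that the sum equals $\sum_{\mu \in \Z_+^d(\lambda)} C_\lambda^\mu\, \D^{\lambda-\mu} f\, \D^\mu g$. Each term in this sum is the product of a smooth function and an $L_1^{\loc}(D)$ function, so the result lies in $L_1^{\loc}(D)$, which proves (2.1.4).
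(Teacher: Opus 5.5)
Your proof is correct. The paper states this lemma without any proof, treating it as a standard fact, so there is no argument of the paper's to compare with. Your route is the standard one: you get the first-order rule by testing the generalized derivative of $g$ against $f\varphi \in C_0^\infty(D)$, then induct on $|\lambda|$. In the induction you correctly check that $\mu+e_j\in\Z_+^d(\lambda)$ for $\mu\in\Z_+^d(\lambda')$, and you recombine the coefficients by Pascal's rule in the $j$-th coordinate, so every hypothesis is used where it is needed.
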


\begin{remark} \label{z5}
For $ l \in \Z_+^d, m \in \N^d, \kappa \in \Z_+^d $ for an open set $ U \subset \R^d, $ a function $ f \in
\mathcal P_\kappa^{l,m,U} $
of the form (1.3.10), $ \lambda \in \Z_+^d(m) $ in the space of generalized functions on $ \R^d $ the relation holds
\begin{equation*} \tag{2.1.5}
\D^\lambda f = \sum_{\nu \in N_\kappa^{m,U}}
\D^\lambda (f_\nu g_{\kappa,\nu}^{m}) \in
L_1^{\loc}(\R^d),
\end{equation*}
where the sum, generally speaking, of a series in (2.1.5) is understood pointwise.
\end{remark}

\begin{propos}\label{p2.1.2}

Let $ l \in \Z_+^d, m \in \N^d, $ and let a domain $ D \subset \R^d $ and its open subset $ U \subset D $ be such that there exist constants $ \kappa^0 = \kappa^0(m,D,U) \in \Z_+^d, \gamma^0 = \gamma^0(m,D,U) \in \R_+^d, $
for which for any $ \kappa \in \Z_+^d $
there exists a mapping $ \nu_{\kappa} =
\nu_{\kappa}^{m,D,U}: N_{\kappa^0 +\kappa}^{m,U} \mapsto \Z^d, $
possessing the property that for each $ \nu \in N_{\kappa^0 +\kappa}^{m,U} $ the inclusion holds
\begin{equation*} \tag{2.1.6}
Q_{\kappa^0 +\kappa,\nu_{\kappa}(\nu)} \subset D \cap
(2^{-\kappa^0 -\kappa} \nu +\gamma^0 2^{-\kappa^0 -\kappa} B^d).
\end{equation*}
And let $ \lambda \in \Z_+^d(m), 1 \le p \le q \le \infty, $ and also, if the set $ U $ is bounded, let $ 1 \le q < p \le \infty. $ Then there exists a constant $ c_1(l,m,D,U,\lambda,p,q) > 0 $ such that for any function $ f \in L_p(D) $ and for $ \kappa \in \Z_+^d $ the inequality holds
\begin{multline*} \tag{2.1.7}
\| \D^\lambda E_{\kappa^0 +\kappa}^{l,m,D,U,
\nu_{\kappa}} f \|_{L_q(\R^d)} \le
c_1 2^{(\kappa, \lambda +(p^{-1} -q^{-1})_+ \e)}
\| f\|_{L_p(D)}, \\
(\text{see (2.1.3) with } \kappa^0 +\kappa \text{ instead of } \kappa, \nu_\kappa
\text{ instead of } \bm \nu_{\kappa^0 +\kappa}).
\end{multline*}
\end{propos}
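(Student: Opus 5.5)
Write $\bar\kappa = \kappa^0+\kappa$ and $E = E_{\bar\kappa}^{l,m,D,U,\nu_\kappa}$. The plan is to localize to the cubes $Q_{\bar\kappa,\nu'}$, bound $\D^\lambda E f$ on each cube by a finite sum of local terms, and then sum over the cubes.

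\textbf{Step 1: local formula.} By Remark \ref{z5} and Lemma \ref{l2.1.1}, on $\R^d$ we have
$$
\D^\lambda E f = \sum_{\nu \in N_{\bar\kappa}^{m,U}} \sum_{\mu\in\Z_+^d(\lambda)} C_\lambda^\mu\, \D^{\lambda-\mu}(S_{\bar\kappa,\nu_\kappa(\nu)}^l f)\, \D^\mu g_{\bar\kappa,\nu}^m .
$$
Fix a cube $Q_{\bar\kappa,\nu'}$. By (1.3.6), only the indices $\nu \in \nu'+\Nu_{-m,0}$ contribute on it, and there are at most $\prod_j (m_j+1)$ of them.

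\textbf{Step 2: one summand on one cube.} Put $P = S_{\bar\kappa,\nu_\kappa(\nu)}^l f$ and $Q^\ast = Q_{\bar\kappa,\nu_\kappa(\nu)}$. By (1.3.8), $\|\D^\mu g_{\bar\kappa,\nu}^m\|_{L_\infty} \le c\,2^{(\bar\kappa,\mu)}$. For the polynomial factor we use:
\begin{enumerate}
\item (2.1.6) and (1.3.4), which place $Q^\ast$ and $Q_{\bar\kappa,\nu'}$ inside a box $2^{-\bar\kappa}\nu + c(\gamma^0,m)2^{-\bar\kappa}B^d$;
\item equivalence of norms on the finite-dimensional space $\mathcal P^l$, transferred by the affine change $x\mapsto 2^{-\bar\kappa}(\nu_\kappa(\nu)+x)$;
\item Markov's inequality.
\end{enumerate}
Together these give
$$
\|\D^{\lambda-\mu}P\|_{L_q(Q_{\bar\kappa,\nu'})} \le c\,2^{(\bar\kappa,\lambda-\mu)}\,2^{(\bar\kappa,\e)(1/p-1/q)}\,\|P\|_{L_p(Q^\ast)} .
$$
By Lemma \ref{l1.1.1}, item 3, $\|P\|_{L_p(Q^\ast)} \le c_1\|f\|_{L_p(Q^\ast)}$. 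Multiplying the two factors, the powers of $2^{(\bar\kappa,\mu)}$ cancel. So on each cube, $\|\D^\lambda Ef\|_{L_q(Q_{\bar\kappa,\nu'})}$ is at most $c\,2^{(\bar\kappa,\lambda)+(\bar\kappa,\e)(1/p-1/q)}$ times the sum of $\|f\|_{L_p(Q_{\bar\kappa,\nu_\kappa(\nu)})}$ over the boundedly many $\nu$ from Step 1.

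\textbf{Step 3: summation over cubes, which is the main obstacle.} The key point is bounded multiplicity: for fixed $\nu''$, the number of pairs $(\nu',\nu)$ with $\nu\in\nu'+\Nu_{-m,0}$ and $\nu_\kappa(\nu)=\nu''$ is bounded by $c(m,\gamma^0)$. This holds because by (2.1.6) $\|\nu_\kappa(\nu)-\nu\|\le \gamma^0+1$ componentwise, so only boundedly many $\nu$ map to a given $\nu''$. The two cases then go as follows.
\begin{enumerate}
\item \emph{Case $p\le q$.} Use $\|\cdot\|_{\ell_q}\le\|\cdot\|_{\ell_p}$ over the cubes together with the multiplicity bound. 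This gives $\|\D^\lambda Ef\|_{L_q}\le c\,2^{(\bar\kappa,\lambda+(1/p-1/q)\e)}\|f\|_{L_p(D)}$. For $q=\infty$ replace the sums by suprema.
\item \emph{Case $q<p$, $U$ bounded.} The factor $2^{(\bar\kappa,\e)(1/p-1/q)}$ is now $\le 1$. The number of cubes $Q_{\bar\kappa,\nu'}$ meeting the support is at most $c(U)2^{(\bar\kappa,\e)}$, because $U$ is bounded and the supports have diameter $\le c2^{-\bar\kappa}$. Hölder's inequality over the cubes costs $(2^{(\bar\kappa,\e)})^{1/q-1/p}$, which exactly cancels that factor. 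This yields exponent $(\bar\kappa,\lambda)$, consistent with $(p^{-1}-q^{-1})_+=0$.
\end{enumerate}
Finally, replacing $\bar\kappa$ by $\kappa$ in the exponent changes only the constant, since $2^{(\kappa^0,\lambda+\e)}$ depends only on $m,D,U,\lambda$. This proves (2.1.7).
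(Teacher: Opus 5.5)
Your proof is correct and follows essentially the route the paper indicates for this proposition (whose proof it defers to [10], citing (2.1.3)--(2.1.6) and (1.3.6)). You localize to the cubes $Q_{\kappa^0+\kappa,\nu'}$ via (1.3.6), apply the Leibniz rule (2.1.4)--(2.1.5) together with (1.3.8), bound the polynomial factor by norm equivalence on $\mathcal P^l$ after the affine rescaling combined with item 3 of Lemma \ref{l1.1.1} and (2.1.6), and sum using the bounded multiplicity of $\nu\mapsto\nu_\kappa(\nu)$, with H\"older's inequality over at most $c\,2^{(\kappa^0+\kappa,\e)}$ cubes when $q<p$ and $U$ is bounded; the appeal to Markov's inequality is superfluous, since norm equivalence on $\mathcal P^l$ over the fixed box $(m+\gamma^0+\e)B^d$ already controls the derivatives.
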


The proof of Proposition \ref{p2.1.2} is given in [10], based on (2.1.3) -- (2.1.6), (1.3.6) and other facts.

The following proposition will be needed later.

\begin{propos}\label{p2.1.3}

Let $ l \in \Z_+^d, m \in \N^d, 1 \le p < \infty, $
and let a domain $ D \subset \R^d $ and its open subset $ U \subset D $ satisfy the conditions of Proposition \ref{p2.1.2}.
Then for any function $ f \in L_p(D) $ in $ L_p(U) $ the equality holds
\begin{equation*} \tag{2.1.8}
f \mid_U = \lim_{\mn(\kappa) \to \infty}
(E_{\kappa^0 +\kappa}^{l,m,D,U,
\nu_{\kappa}} f) \mid_U \text{ (see (2.1.3) and Proposition } \ref{p2.1.2}).
\end{equation*}
\end{propos}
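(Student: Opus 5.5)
The argument combines a uniform bound for the operators $ E_{\kappa^0+\kappa} $ with their convergence on a dense subclass; this is the standard Banach--Steinhaus scheme. Write $ E_\kappa = E_{\kappa^0 +\kappa}^{l,m,D,U,\nu_\kappa} $.

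\textbf{Step 1 (uniform boundedness).} Apply Proposition \ref{p2.1.2} with $ \lambda =0 $ and $ q =p $. Then $ (p^{-1}-q^{-1})_+ =0 $, and (2.1.7) gives
$$
\| E_\kappa f\|_{L_p(U)} \le \| E_\kappa f\|_{L_p(\R^d)} \le c_1 \| f\|_{L_p(D)}
$$
for all $ \kappa \in \Z_+^d $ and $ f \in L_p(D) $. Boundedness of $ U $ plays no role here.

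\textbf{Step 2 (dense subclass).} Take $ \mathcal G = \{ g \in L_p(D)\cap C^\infty(D) : g \text{ uniformly continuous on } D \text{ with } \|g\|_{L_\infty(D)} <\infty\} $. A simpler choice is $ g = \phi|_D $ with $ \phi \in C_0^\infty(\R^d) $, but that class is not dense in $ L_p(D) $ for a general domain. So I would rather use continuous functions with compact support in $ D $: let $ \mathcal G $ be the continuous $ g $ with $ \supp g \subset D $ compact, extended by zero to $ \R^d $. This class is dense in $ L_p(D) $ because $ p<\infty $. Once Step 3 is done, density and Step 1 give, for $ f\in L_p(D) $ and $ g\in \mathcal G $,
$$
\|f - E_\kappa f\|_{L_p(U)} \le (1+c_1)\|f-g\|_{L_p(D)} + \|g - E_\kappa g\|_{L_p(U)},
$$
which yields (2.1.8).

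\textbf{Step 3 (convergence on $ \mathcal G $; the main point).} Let $ g \in \mathcal G $ and put $ \omega(\delta) = \sup\{|g(x)-g(y)| : \|x-y\| \le \delta\} $, which tends to $ 0 $ as $ \delta \to 0 $.

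Fix $ x\in U $. By (1.3.7), $ \sum_{\nu} g_{\kappa^0+\kappa,\nu}^m(x) =1 $ a.e., with the $ g^m $ nonnegative. So a.e. on $ U $,
$$
g(x) - E_\kappa g(x) = \sum_{\nu \in N_{\kappa^0+\kappa}^{m,U}} \bigl(g(x) - S_{\kappa^0+\kappa,\nu_\kappa(\nu)}^l g(x)\bigr) g_{\kappa^0+\kappa,\nu}^m(x).
$$
Only indices $ \nu $ with $ x \in \supp g^m_{\kappa^0+\kappa,\nu} $ contribute. For such $ \nu $, (1.3.4) gives $ \|x - 2^{-\kappa^0-\kappa}\nu\| \le \mx(m+\e) 2^{-\mn(\kappa^0+\kappa)} $. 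Then by (2.1.6) the cube $ Q=Q_{\kappa^0+\kappa,\nu_\kappa(\nu)} $ lies inside $ x + C 2^{-\kappa^0-\kappa}B^d $, where $ C $ depends only on $ m $ and $ \gamma^0 $.

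Property 1 of Lemma \ref{l1.1.1} says the projection reproduces constants, so $ S^l(g) - g(x) = S^l(g - g(x)) $. I then need a pointwise bound for this polynomial on the enlarged cube $ x + C2^{-\kappa^0-\kappa}B^d $. Property 3 of Lemma \ref{l1.1.1} with $ p=\infty $ bounds it on $ Q $ by $ c\, \|g-g(x)\|_{L_\infty(Q)} \le c\,\omega(C' 2^{-\mn(\kappa)}) $. To pass from $ Q $ to the enlarged cube I use the equivalence of norms on the finite-dimensional space $ \mathcal P^l $ under affine rescaling: a polynomial's sup over a cube of comparable size is controlled by its sup on $ Q $, with a constant depending on $ l $ and the ratio $ C $.

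Summing over the at most $ \prod_j (m_j+1) $ contributing $ \nu $ (see (1.3.6)), with $ 0\le g^m\le 1 $, I get
$$
|g(x)-E_\kappa g(x)| \le c\,\omega(C'2^{-\mn(\kappa)}) \quad \text{for } x\in U .
$$

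\textbf{Step 4 (from pointwise to $ L_p(U) $).} This is the obstacle, since $ U $ may be unbounded. I handle it with support considerations. The index $ \nu $ contributes only if $ Q_{\kappa^0+\kappa,\nu_\kappa(\nu)} $ meets $ \supp g $, or if $ x $ itself lies near $ \supp g $; otherwise both $ g(x) $ and the corresponding $ S^l g $ vanish. Hence $ g - E_\kappa g $ is supported in a fixed bounded neighbourhood $ K $ of $ \supp g $, valid for all $ \kappa $ because the scale $ 2^{-\mn(\kappa)} \le 1 $. Therefore
$$
\|g-E_\kappa g\|_{L_p(U)} \le c\,\omega(C'2^{-\mn(\kappa)})\,\mathrm{mes}(K)^{1/p} \to 0 \quad \text{as } \mn(\kappa)\to\infty ,
$$
which completes the proof.
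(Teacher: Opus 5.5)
Your proof is correct, but it follows a different route from the one the paper points to. The paper gives no details. It refers to Proposition 1.3.1 of [5] and Proposition 2.1.3 of [10], and names only the partition of unity (1.3.7) and the $L_p$ estimate (1.1.2) of Lemma 1.1.1 as ingredients. So the local error $f - S^l_{\kappa^0+\kappa,\nu_\kappa(\nu)} f$ is presumably measured there directly in $L_p$, through averaged $(l_j+1)$-th differences over boxes of size comparable to $2^{-\kappa^0-\kappa}$. Such a box contains both $\supp g^m_{\kappa^0+\kappa,\nu}$ and the cube from (2.1.6), and $f$ can be extended, e.g., by zero to handle boxes leaving $D$. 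Summing with bounded overlap then bounds $\|f - E f\|_{L_p(U)}$ by an $L_p$ modulus of continuity at scale $2^{-\kappa}$, which tends to zero for every $f\in L_p(D)$ at once.

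You instead run a Banach--Steinhaus argument:
\begin{itemize}
\item the uniform bound (2.1.7) with $\lambda=0$, $q=p$ reduces everything to continuous compactly supported functions;
\item for these, reproduction of constants, the $p=\infty$ case of property 3 in Lemma 1.1.1, and equivalence of norms on $\mathcal P^l$ under coordinatewise affine rescaling give a uniform pointwise bound $c\,\omega(C'2^{-\mn(\kappa)})$;
\item the support argument of Step 4 correctly handles unbounded $U$.
\end{itemize}

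What each route buys: yours never needs (1.1.2) and keeps the local estimate elementary. The price is that it leans on Proposition 2.1.2, whose proof is only cited, and it yields purely qualitative convergence. The route through (1.1.2), as far as its cited ingredients indicate, avoids the detour through a dense class and gives a rate in terms of an $L_p$ modulus of $f$.

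One side remark in your Step 2 is false, though it does not affect the argument. The class $\{\phi\mid_D:\phi\in C_0^\infty(\R^d)\}$ is dense in $L_p(D)$ for $p<\infty$, since it contains $C_0^\infty(D)$. Drop that sentence, together with the abandoned first definition of $\mathcal G$.
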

The proof of Proposition \ref{p2.1.3} is analogous to the proof of Proposition 1.3.1 from [5] and is based on (1.3.7), (1.1.2) (see Proposition 2.1.3 from [10]).

To formulate the most important statement of this subsection, we introduce the following definitions.

Let $ D $ be a domain in $ \R^d $ and $ \kappa^0 \in \Z_+^d $ be such that (2.1.1) holds with $ \kappa^0 $ instead of $ \kappa, $ and let $ U \subset D $ be an open subset of $ D. $ Keeping in mind Remark \ref{z4} after (2.1.3), for $ m \in \N^d $ consider some family of mappings $ \Nu = \{ \nu_{\kappa}, \kappa \in \Z_+^d \}, $ of the form (2.1.2) with $ \kappa^0 +\kappa $ instead of $ \kappa, $ and $ \bm \nu_{\kappa^0 +\kappa} = \nu_\kappa, $ and for $ \kappa, l \in \Z_+^d, $ based on (2.1.3) and (1.3.16), define the linear operator $ \mathcal E_{\kappa^0, \kappa}^{l,m,D,U,\Nu}: \tilde L_1^{\loc}(D) \mapsto \mathcal P_{\kappa^0 +\kappa}^{l,m,U}, $ by setting
\begin{equation*} \tag{2.1.9}
\mathcal E_{\kappa^0, \kappa}^{l,m,D,U,\Nu} =
\sum_{\epsilon \in \Upsilon^d: \s(\epsilon) \subset
\s(\kappa)} (-\e)^\epsilon
H_{\kappa^0 +\kappa, \kappa^0 +\kappa -\epsilon}^{l,m,U}
E_{\kappa^0 +\kappa -\epsilon}^{l,m,D,U,\nu_{\kappa -\epsilon}}.
\end{equation*}
Here, taking into account (2.1.9), (2.1.3), (1.3.18), (1.3.19), (1.3.20), we have
\begin{multline*} \tag{2.1.10}
\mathcal E_{\kappa^0, \kappa}^{l,m,D,U,\Nu} f = \\
\sum_{\epsilon \in \Upsilon^d:
\s(\epsilon) \subset \s(\kappa)} (-\e)^\epsilon
\sum_{\nu \in N_{\kappa^0 +\kappa}^{m,U}}
\biggl(\sum_{\m^{\epsilon} \in \M_{\epsilon}^m(\nu)}
A_{\m^{\epsilon}}^m
S_{\kappa^0 +\kappa -\epsilon, \nu_{\kappa -\epsilon}
(\n_{\epsilon}(\nu,\m^{\epsilon}))}^{l} f\biggr)
g_{\kappa^0 +\kappa,\nu}^{m} = \\
\sum_{ \nu \in N_{\kappa^0 +\kappa}^{m,U}}
(U_{\kappa^0, \kappa,\nu}^{l,m,D,U,\Nu} f)
g_{\kappa^0 +\kappa, \nu}^{m},
f \in \tilde L_1^{\loc}(D),
\end{multline*}
where $ U_{\kappa^0, \kappa,\nu}^{l,m,D,U,\Nu}:
\tilde L_1^{\loc}(D) \mapsto
\mathcal P^{l} $ is a linear operator defined by
\begin{multline*} \tag{2.1.11}
U_{\kappa^0, \kappa,\nu}^{l,m,D,U,\Nu} f =
\sum_{\epsilon \in \Upsilon^d: \s(\epsilon) \subset
\s(\kappa)} (-\e)^\epsilon
\sum_{\m^{\epsilon} \in \M_{\epsilon}^m(\nu)}
A_{\m^{\epsilon}}^m
S_{\kappa^0 +\kappa -\epsilon, \nu_{\kappa -\epsilon}
(\n_{\epsilon}(\nu,\m^{\epsilon}))}^{l} f,\\ f \in \tilde L_1^{\loc}(D), \text{ see (1.3.12), (1.3.13), (1.3.15), (1.3.21))}.
\end{multline*}

\begin{definition} \label{d1}

For $ m \in \N^d $, we say that a domain $ D \subset \R^d $ and its open subset $ U \subset D $ form an $m$-regular pair if there exist constants $ \Kappa^0 = \Kappa^0(m,D,U) \in \Z_+^d, \Gamma^0 = \Gamma^0(m,D,U) \in \R_+^d, $ for which there exist families of mappings
\begin{multline*}
\Nu = \Nu^{m,D,U} = \{ \nu_{\kappa} = \
\nu_\kappa^{m,D,U}: N_{\Kappa^0 +\kappa}^{m,U} \mapsto \Z^d, \kappa \in \Z_+^d\}, \\
\{n_{\kappa}: N_{\Kappa^0 +\kappa}^{m,U} \mapsto \Z^d,
\kappa \in \Z_+^d\},
\end{multline*}
possessing the following properties:

1) for $ \kappa \in \Z_+^d $ for each $ \nu \in
N_{\Kappa^0 +\kappa}^{m,U} $
the inclusion holds
\begin{equation*} \tag{2.1.12}
(Q_{\Kappa^0 +\kappa,\nu_{\kappa}(\nu)} \cup
Q_{\Kappa^0 +\kappa,n_{\kappa}(\nu)}) \subset D \cap
(2^{-\Kappa^0 -\kappa} \nu +\Gamma^0 2^{-\Kappa^0 -\kappa} B^d);
\end{equation*}

2) for $ \kappa \in \Z_+^d, \nu \in
N_{\Kappa^0 +\kappa}^{m,U}, \epsilon \in \Upsilon^d:
\s(\epsilon) \subset \s(\kappa), \m^{\epsilon} \in
\M_{\epsilon}^m(\nu) $ for
$$
\mathcal D_{\kappa,\nu,\epsilon,\m^{\epsilon}} =
\bm x_{\kappa,\nu,\epsilon,\m^{\epsilon}} +
\bm \delta_{\kappa,\nu,\epsilon,\m^{\epsilon}} I^d,
$$
where the point $ \bm x_{\kappa,\nu,\epsilon,
\m^{\epsilon}} \in \R^d $ and the vector $ \bm \delta_{\kappa,\nu,\epsilon,\m^{\epsilon}} \in \R_+^d $
are defined by the equalities
\begin{multline*}
(\bm x_{\kappa,\nu,\epsilon,\m^{\epsilon}})_j =\\
\min(2^{-\Kappa^0_j -\kappa_j} (n_{\kappa}(\nu))_j,
2^{-\Kappa^0_j -\kappa_j +\epsilon_j} (\nu_{\kappa -\epsilon}(\n_{\epsilon}(\nu,\m^{\epsilon})))_j),
j \in \Nu_{1,d}; \text{ ( see (1.3.1))} \\
(\bm \delta_{\kappa,\nu,\epsilon,\m^{\epsilon}})_j =
\max(2^{-\Kappa^0_j -\kappa_j} (n_{\kappa}(\nu))_j
+2^{-\Kappa^0_j -\kappa_j}, \\
2^{-\Kappa^0_j -\kappa_j +\epsilon_j} (\nu_{\kappa -\epsilon}(\n_{\epsilon}(\nu,\m^{\epsilon})))_j +
2^{-\Kappa^0_j -\kappa_j +\epsilon_j})
-(\bm x_{\kappa,\nu,\epsilon,\m^{\epsilon}})_j,
j \in \Nu_{1,d},
\end{multline*}
the inclusion holds
\begin{equation*} \tag{2.1.13}
\mathcal D_{\kappa,\nu,\epsilon,\m^{\epsilon}} \subset D;
\end{equation*}

3) for $ \kappa \in \Z_+^d, \nu \in
N_{\Kappa^0 +\kappa}^{m,U} $ for any $ \epsilon \in \Upsilon^d: \s(\epsilon) \subset
\s(\kappa), $ and $ \m^{\epsilon}
\in \M_{\epsilon}^m(\nu) $ and for $ j \in \Nu_{1,d} \setminus \s(\epsilon) $ the equality holds
\begin{equation*} \tag{2.1.14}
(\nu_{\kappa -\epsilon}
(\n_{\epsilon}(\nu,\m^{\epsilon})))_j =
(\nu_{\kappa}(\nu))_j.
\end{equation*}
\end{definition}

\begin{remark} \label{z6}
Under the conditions of Definition \ref{d1}, there exists a constant $ \Gamma^1(m,D,U) \in \R_+^d $ such that for $ \kappa \in \Z_+^d, \nu \in N_{\Kappa^0 +\kappa}^{m,U},
\epsilon \in \Upsilon^d: \s(\epsilon) \subset
\s(\kappa), \m^{\epsilon} \in
\M_{\epsilon}^m(\nu) $ the following relations hold:

\begin{equation*} \tag{2.1.15}
2^{-\Kappa^0 -\kappa} \le \bm \delta_{\kappa,\nu,\epsilon,\m^{\epsilon}}
\le \Gamma^1 2^{-\Kappa^0 -\kappa},
\end{equation*}
\begin{equation*} \tag{2.1.16}
Q_{\Kappa^0 +\kappa, n_{\kappa}(\nu)} \cup
Q_{\Kappa^0 +\kappa -\epsilon, \nu_{\kappa -\epsilon}
(\n_{\epsilon}(\nu,\m^{\epsilon}))} \subset
\mathcal D_{\kappa,\nu,\epsilon,\m^{\epsilon}}.
\end{equation*}
\end{remark}
The verification of (2.1.15), (2.1.16) is carried out using (1.3.15), (1.3.13) and the conditions of Definition \ref{d1} (see [10]).

Note also that for $ m, \bm m \in \N^d: m \le \bm m, $
every $ \bm m $-regular pair $ (D,U)$ is an $m$-regular pair.
And if domains $ D \subset \mathcal D \subset \R^d $ and an open subset $ U \subset D $ are such that for some $ m \in
\N^d $, $ (D,U) $ is an $m$-regular pair, then $ (\mathcal D,U) $ is also an $m$-regular pair.

Let us give several examples of $m$-regular pairs.

Example 1.
The pair $ D = I^d, U = I^d $ is $m$-regular for $ m \in \N^d. $
In this example, for $ m \in \N^d $ the constant $ \Kappa^0(m,D,U) = 0, $ the set $ N_{\kappa}^{m,U} = \Nu_{-m, 2^\kappa -\e}, $ and the mappings are given by $ \nu_{\kappa}(\nu) =
n_{\kappa}(\nu) = \nu_+,
\nu \in N_{\kappa}^{m,U}, \kappa \in \Z_+^d. $

Example 2.

\begin{lemma}\label{l2.1.4}
Let a domain $ D \subset \R^d $ and its open subset $ U \subset D $ be such that there exists $ \delta \in \R_+^d $ for which the inclusion holds $ (U +\delta I^d) \subset D. $ Then $ (D,U) $ is an $m$-regular pair for any $ m \in \N^d. $
\end{lemma}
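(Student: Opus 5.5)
We construct the constants and the mappings of Definition \ref{d1} explicitly, using one simple rule that depends only on $\nu$ and not on the level. Fix $m \in \N^d$. Choose $\Kappa^0 \in \Z_+^d$ so that $2^{-\Kappa^0_j}(2m_j +4) < \delta_j$ for $j=1,\ldots,d$. For $\kappa \in \Z_+^d$ and $\nu \in N_{\Kappa^0 +\kappa}^{m,U}$ set
$$
\nu_\kappa(\nu) = n_\kappa(\nu) = \nu +m +\e .
$$
Because the shift $m +\e$ does not depend on $\kappa$, condition 3) is immediate. For $j \notin \s(\epsilon)$ we have $(\n_\epsilon(\nu,\m^\epsilon))_j = \nu_j$ by (1.3.13), and hence $(\nu_{\kappa-\epsilon}(\n_\epsilon(\nu,\m^\epsilon)))_j = \nu_j +m_j +1 = (\nu_\kappa(\nu))_j$, which is (2.1.14). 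The compositions appearing in 2) are well defined by Remark \ref{z3}, i.e.\ by (1.3.15): we have $\n_\epsilon(\nu,\m^\epsilon) \in N_{\Kappa^0+\kappa-\epsilon}^{m,U}$, which is the domain of $\nu_{\kappa-\epsilon}$.

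The core step is a single inclusion. Write $k = \Kappa^0 +\kappa$. Since $\nu \in N_k^{m,U}$, (1.3.4) gives a point $y \in U$ with $y \in 2^{-k}\nu +2^{-k}(m+\e)\overline I^d$, that is, $2^{-k_j}\nu_j \le y_j \le 2^{-k_j}(\nu_j +m_j+1)$. We then check coordinatewise that both cubes in (2.1.16), and hence the box $\mathcal D_{\kappa,\nu,\epsilon,\m^\epsilon}$ spanned by them, lie in $y +[0, 2^{-k}(2m+4\e)]$, with the lower endpoint attained only as an open endpoint.

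For the fine cube $Q_{k,\nu+m+\e}$, the $j$-th coordinate runs over $(2^{-k_j}(\nu_j+m_j+1),\, 2^{-k_j}(\nu_j+m_j+2))$. This interval lies to the right of $y_j$ and within $2^{-k_j}(m_j+2)$ of it.

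For $j \in \s(\epsilon)$, the coarse cube $Q_{k-\epsilon,\nu_{\kappa-\epsilon}(\n_\epsilon(\nu,\m^\epsilon))}$ has $j$-th coordinate range
$$
2^{-k_j+1}\bigl((\nu_j-\m_j)/2 +m_j+1\bigr) +2^{-k_j+1}(0,1) = 2^{-k_j}\bigl(\nu_j -\m_j +2m_j +2,\ \nu_j -\m_j +2m_j+4\bigr).
$$
Since $0 \le \m_j \le m_j+1$, the left endpoint is at least $2^{-k_j}(\nu_j+m_j+1) \ge y_j$, and the right endpoint is at most $2^{-k_j}(\nu_j +2m_j+4) \le y_j +2^{-k_j}(2m_j+4)$.

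For $j \notin \s(\epsilon)$ the coarse and fine coordinate ranges coincide. Because the convex hull of the two cubes stays inside this box, we get
$$
\mathcal D_{\kappa,\nu,\epsilon,\m^\epsilon} \subset y +\delta I^d \subset U +\delta I^d \subset D ,
$$
by the choice of $\Kappa^0$. This gives (2.1.13), and in particular also $Q_{k,\nu_\kappa(\nu)} \subset D$.

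Finally, for (2.1.12) we take $\Gamma^0 = 2m +3\e$ (any vector larger than $m+2\e$ would do). The fine cube lies in $2^{-k}\nu +2^{-k}(m+2\e)\overline I^d \subset 2^{-k}\nu +\Gamma^0 2^{-k}B^d$, and together with the previous step this gives 1).

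The main obstacle is the bookkeeping in condition 2): one has to make sure the coarse-level cube, obtained after halving indices through $\n_\epsilon$, still lies to the right of $y$ and not too far from it. The choice of shift $m+\e$, rather than a shift $0$ or one depending on $y$, is exactly what makes the lower bound $\nu_j -\m_j +2m_j+2 \ge \nu_j +m_j+1$ hold for all admissible $\m_j$, while keeping condition 3) automatic.
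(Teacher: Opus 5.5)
Your proof is correct. The level-independent choice $\nu_\kappa(\nu)=n_\kappa(\nu)=\nu+m+\e$ makes (2.1.14) automatic, and the domain question for $\nu_{\kappa-\epsilon}$ is settled by (1.3.15). Your coordinatewise estimates, using $0\le \m_j\le m_j+1$, place both cubes, and hence the box $\mathcal D_{\kappa,\nu,\epsilon,\m^{\epsilon}}$, inside $y+\delta I^d\subset U+\delta I^d\subset D$ for a point $y\in U\cap\supp g^m_{\Kappa^0+\kappa,\nu}$. This gives (2.1.12) and (2.1.13).

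The paper does not prove this lemma itself but refers to [10], so there is no in-text argument to compare with. Your construction mirrors Example~1, where $\nu_\kappa(\nu)=n_\kappa(\nu)=\nu_+$. The room provided by $\delta$ and a large $\Kappa^0$ replaces the explicit geometry of $I^d$.
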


The proof of Lemma \ref{l2.1.4} is given in [10].

The following proposition occupies one of the central places in the paper.

\begin{propos}\label{p2.1.5}

Let $ m \in \N^d, $ and let a domain $ D \subset \R^d $ and its open subset $ U \subset D $ form an $m$-regular pair. And let $ l \in \N^d, \lambda \in \Z_+^d(m), 1 \le p < \infty, p \le q \le \infty, $ and also, if the set $ U $ is bounded, let $ 1 \le q < p. $
Then there exist constants $ c_{2}(l,m,D,U,\lambda,p,q) > 0, c_{3}(m,D,U) > 0 $ such that for $ \kappa \in \Z_+^d \setminus \{0\} $ and for $ f \in L_p(D) $ the inequality holds
\begin{multline*} \tag{2.1.17}
\| \D^\lambda \mathcal E_{\Kappa^0,\kappa}^{l -\e,m,D,U,\Nu} f \|_{L_q(\R^d)}
\le c_{2} 2^{(\kappa, \lambda +(p^{-1} -q^{-1})_+ \e)}
\Omega^{\prime l \chi_{\s(\kappa)}}(f,
(c_{3} 2^{-\kappa})^{\s(\kappa)})_{L_p(D)} \\
\text{(see (2.1.9) with $ \Kappa^0, \Nu $ from Definition \ref{d1}).}
\end{multline*}
\end{propos}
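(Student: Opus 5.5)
By (2.1.10), $\mathcal E_{\Kappa^0,\kappa}^{l-\e,m,D,U,\Nu} f=\sum_{\nu\in N_{\Kappa^0+\kappa}^{m,U}}(U_{\Kappa^0,\kappa,\nu}f)\,g^m_{\Kappa^0+\kappa,\nu}$. The plan is to estimate each coefficient polynomial $U_{\Kappa^0,\kappa,\nu}f$ locally on a box of size $\asymp 2^{-\Kappa^0-\kappa}$, and then glue these local estimates together. This is the same gluing scheme that gives Proposition \ref{p2.1.2}, except that $\|f\|_{L_p(D)}$ is now replaced by a localized mixed difference quantity.

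The first step is to insert the auxiliary polynomial $S^{l-\e}_{\Kappa^0+\kappa,n_\kappa(\nu)}f$. By Lemma \ref{l1.3.3}, for every $\epsilon$ we have $\sum_{\m^\epsilon}A^m_{\m^\epsilon}=1$, and $\sum_{\epsilon:\s(\epsilon)\subset\s(\kappa)}(-\e)^\epsilon=0$ because $\kappa\ne0$. Hence
$$
U_{\Kappa^0,\kappa,\nu}f=\sum_{\epsilon:\s(\epsilon)\subset\s(\kappa)}(-\e)^\epsilon\sum_{\m^\epsilon}A^m_{\m^\epsilon}\bigl(S^{l-\e}_{\Kappa^0+\kappa-\epsilon,\nu_{\kappa-\epsilon}(\n_\epsilon(\nu,\m^\epsilon))}f-P_{\nu}f\bigr)
$$
for a suitable reference polynomial $P_\nu$.

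The heart of the argument is to rewrite this alternating sum as a genuine mixed object of order $l_j$ in each direction $j\in\s(\kappa)$. I would proceed as follows. On the box $\mathcal D=\mathcal D_{\kappa,\nu,\epsilon,\m^\epsilon}\subset D$ from (2.1.13), (2.1.16), consider the operator $P^{l-\e}_{\mathcal D}$ from Lemma \ref{l1.1.1}. Because of (2.1.14), the cubes $Q_{\Kappa^0+\kappa-\epsilon,\ldots}$ differ from $Q_{\Kappa^0+\kappa,\nu_\kappa(\nu)}$ only in the coordinates $j\in\s(\epsilon)$. I would therefore factor the projections coordinatewise, as tensor products of one-dimensional projections $P^{l_j-1}$. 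The difference $\sum_\epsilon(-\e)^\epsilon\cdots$ then becomes $\prod_{j\in\s(\kappa)}(\mathrm{Id}-P_j)$ applied to $f$ and composed with bounded one-dimensional projections, since the one-dimensional projections on the nested boxes reproduce polynomials.

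Applying the one-dimensional Lemma \ref{l1.1.1}(4) iteratively, one variable at a time and with Fubini, gives
$$
\|U_{\Kappa^0,\kappa,\nu}f\|_{L_p(Q)}\le c\Bigl(2^{(\kappa,\e)}\int_{(c2^{-\kappa}B^d)^{\s(\kappa)}}\int_{\tilde D_\nu}|\Delta^{l\chi_{\s(\kappa)}}_\xi f(x)|^p\,dx\,d\xi^{\s(\kappa)}\Bigr)^{1/p},
$$
where $\tilde D_\nu$ is a box of size $\asymp\Gamma^1 2^{-\Kappa^0-\kappa}$ around $2^{-\Kappa^0-\kappa}\nu$. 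Making this coordinatewise factorization rigorous, with the differences taken only over steps staying inside $\mathcal D\subset D$, is the step I expect to be the main obstacle. If a direct tensor factorization fails, I would first try an induction on $\card\s(\kappa)$, peeling off one coordinate at a time via (1.3.14) and applying (1.1.2) in that coordinate.

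The final step is the gluing. By (1.3.8), Lemma \ref{l2.1.1} and the equivalence of norms on $\mathcal P^{l-\e}$ under dilation, $\|\D^\lambda((U f)g_{\Kappa^0+\kappa,\nu}^m)\|_{L_q}\le c2^{(\kappa,\lambda)}2^{(\kappa,\e)(1/p-1/q)}\|Uf\|_{L_p(Q)}$. The supports $\supp g^m_{\Kappa^0+\kappa,\nu}$ and the boxes $\tilde D_\nu$ have bounded overlap, by (1.3.6). For $p\le q$, I sum using $\ell_p\subset\ell_q$. For $q<p$ with $U$ bounded, Hölder is applied over $\card N_{\Kappa^0+\kappa}\asymp 2^{(\kappa,\e)}$, and the factor $2^{(\kappa,\e)(1/p-1/q)}$ is then absorbed since $(p^{-1}-q^{-1})_+=0$. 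Summing the $p$-th powers of the local integrals reconstructs $\Omega^{\prime l\chi_{\s(\kappa)}}(f,(c_32^{-\kappa})^{\s(\kappa)})_{L_p(D)}$, including its normalizing factor $(2t)^{-\e}$, which yields (2.1.17).
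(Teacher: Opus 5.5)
Your outer frame is sound: the representation (2.1.10), the bound for $\D^\lambda\bigl((U^{l-\e,m,D,U,\Nu}_{\Kappa^0,\kappa,\nu}f)\,g^m_{\Kappa^0+\kappa,\nu}\bigr)$ via (2.1.4), (1.3.8) and norm equivalence on $\mathcal P^{l-\e}$, the bounded-overlap gluing for both $p\le q$ and $q<p$, and the iterated one-dimensional use of (1.1.2) with Fubini on a box. The gap is the step you flag yourself, and as formulated it fails.

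You place the whole alternating sum on \emph{the} box $\mathcal D_{\kappa,\nu,\epsilon,\m^\epsilon}$, but this box depends on $(\epsilon,\m^\epsilon)$. Definition~\ref{d1} only guarantees, via (2.1.13) and (2.1.16), that each single cube $Q_{\Kappa^0+\kappa-\epsilon,\nu_{\kappa-\epsilon}(\n_\epsilon(\nu,\m^\epsilon))}$, together with the reference cube $Q_{\Kappa^0+\kappa,n_\kappa(\nu)}$, lies in a box contained in $D$. No box in $D$ containing all these cubes simultaneously is available; that is why the reference cube appears in the definition. So there is no single family $P_j$ for which the sum over $\epsilon$ becomes $\prod_{j\in\s(\kappa)}(\mathrm{Id}-P_j)f$.

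Your step 2 does not help either. Subtracting one and the same reference polynomial from every term only adds zero. Each difference $S_{\ldots}f-S^{l-\e}_{\Kappa^0+\kappa,n_\kappa(\nu)}f$, taken separately, admits only a non-mixed bound (a sum over single directions), not $\Omega^{\prime l\chi_{\s(\kappa)}}$. Moreover, knowing that the cubes differ from $Q_{\Kappa^0+\kappa,\nu_\kappa(\nu)}$ only in the coordinates of $\s(\epsilon)$ is not enough for a tensor factorization. A priori the $j$-th side could depend on $\epsilon$ and on $\m_i$, $i\ne j$.

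Two things are needed. (i) Use (1.3.14) with $\m^\epsilon=\m_j*\m^{\epsilon-e_j}$, then (2.1.14) at level $\kappa-e_j$ applied to $\n_{e_j}(\nu,\m_j)\in N^{m,U}_{\Kappa^0+\kappa-e_j}$. This shows that for $j\in\s(\epsilon)$ the $j$-th side of $Q_{\Kappa^0+\kappa-\epsilon,\nu_{\kappa-\epsilon}(\n_\epsilon(\nu,\m^\epsilon))}$ equals that of $Q_{\Kappa^0+\kappa-e_j,\nu_{\kappa-e_j}(\n_{e_j}(\nu,\m_j))}$, so it depends on $\m_j$ only. Write $\alpha_j$, $\beta_j(\m_j)$, $\varrho_j$ for the $j$-th sides of $Q_{\Kappa^0+\kappa,\nu_\kappa(\nu)}$, of the latter cube, and of the reference cube. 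Write $P^j_\omega$ for the one-dimensional projection in $x_j$ over an interval $\omega$. Then (2.1.11) is exactly the expansion of $\prod_{j\in\s(\kappa)}\bigl(P^j_{\alpha_j}-\sum_{\m_j}a^{m_j}_{\m_j}P^j_{\beta_j(\m_j)}\bigr)\prod_{i\notin\s(\kappa)}P^i_{\alpha_i}$.

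(ii) Insert the reference cube coordinatewise, not globally. By (1.3.22) each factor equals $(P^j_{\alpha_j}-P^j_{\varrho_j})-\sum_{\m_j}a^{m_j}_{\m_j}(P^j_{\beta_j(\m_j)}-P^j_{\varrho_j})$, and expanding gives
\begin{multline*}
U^{l-\e,m,D,U,\Nu}_{\Kappa^0,\kappa,\nu}f=\sum_{\epsilon:\s(\epsilon)\subset\s(\kappa)}(-\e)^\epsilon\sum_{\m^\epsilon\in\M^m_\epsilon(\nu)}A^m_{\m^\epsilon}\prod_{j\in\s(\epsilon)}(P^j_{\beta_j(\m_j)}-P^j_{\varrho_j})\\
\times\prod_{j\in\s(\kappa)\setminus\s(\epsilon)}(P^j_{\alpha_j}-P^j_{\varrho_j})\prod_{i\notin\s(\kappa)}P^i_{\alpha_i}f.
\end{multline*}
Every monomial in the $(\epsilon,\m^\epsilon)$-term is a projection over a box inside the single box $\mathcal D_{\kappa,\nu,\epsilon,\m^\epsilon}\subset D$, and the monomials containing some $P^j_{\varrho_j}$ cancel.

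Only now does your step 3 work, term by term. Each term annihilates functions that are polynomials of degree $<l_j$ in $x_j$ for some $j\in\s(\kappa)$. Hence it may be applied to $\prod_{j\in\s(\kappa)}(\mathrm{Id}-P^j_{\omega_j})f$, with $\omega_j$ the sides of $\mathcal D_{\kappa,\nu,\epsilon,\m^\epsilon}$. It is bounded uniformly thanks to (2.1.15), and the iterated (1.1.2) on that box yields the local mixed modulus. These are precisely the ingredients ((1.3.14), (1.3.22), (2.1.12)--(2.1.16)) that the paper lists for the proof given in [10].
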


The proof of Proposition \ref{p2.1.5} is given in [10] using (2.1.4), (2.1.5), (2.1.10) -- (2.1.16), (1.3.4), (1.3.5), (1.3.8), (1.3.14), (1.3.21), (1.3.22).

\begin{propos}\label{p2.1.6}
Let the conditions of Proposition \ref{p2.1.5} hold.
Then if for a function $ f \in L_p(D) $ and for any nonempty set $ J \subset
\Nu_{1,d} $ the function
\begin{equation*} \tag{2.1.18}
\biggl(\prod_{j \in J}
t_j^{-\lambda_j -(p^{-1} -q^{-1})_+ -1}\biggr)
\Omega^{\prime l \chi_J}(f, c_{3} t^J)_{L_p(D)} \in
L_1((I^d)^J), \
\end{equation*}
then in $ L_q(U) $ the equality holds
\begin{equation*} \tag{2.1.19}
\D^\lambda (f \mid_U) = \sum_{\kappa \in \Z_+^d}
(\D^\lambda (\mathcal E_{\Kappa^0, \kappa}^{l -\e,m,D,U,\Nu} f)) \mid_U.
\end{equation*}
\end{propos}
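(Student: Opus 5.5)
The plan is to combine the telescoping identity from Lemma \ref{l1.2.1} with the convergence of the approximations from Proposition \ref{p2.1.3}. Absolute convergence of the differentiated series, which comes from Proposition \ref{p2.1.5}, then lets us pass $ \D^\lambda $ through the limit in the sense of distributions. An $m$-regular pair satisfies the hypotheses of Proposition \ref{p2.1.2}: take $ \kappa^0 = \Kappa^0 $, $ \gamma^0 = \Gamma^0 $ and $ \nu_\kappa $ from Definition \ref{d1}, using (2.1.12). So put
$$
x_\kappa = (E_{\Kappa^0+\kappa}^{l-\e,m,D,U,\nu_\kappa} f)\mid_U .
$$
By (2.1.9) and (1.3.17), for each $ \kappa $ we have $ (\mathcal E_{\Kappa^0,\kappa}^{l-\e,m,D,U,\Nu} f)\mid_U = \sum_{\epsilon:\s(\epsilon)\subset\s(\kappa)} (-\e)^\epsilon x_{\kappa-\epsilon} $. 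This is exactly the family $ \mathcal X_\kappa $ of Lemma \ref{l1.2.1}, and therefore $ \sum_{\kappa\in\Z_+^d(k)} (\mathcal E_{\Kappa^0,\kappa} f)\mid_U = x_k $. Proposition \ref{p2.1.3} (with $ p<\infty $) gives $ x_k \to f\mid_U $ in $ L_p(U) $ as $ \mn(k)\to\infty $.

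Next, I show that $ \sum_\kappa \|\D^\lambda \mathcal E_{\Kappa^0,\kappa} f\|_{L_q(\R^d)} < \infty $. The term $ \kappa=0 $ is finite by (2.1.7). For $ \kappa\ne 0 $, (2.1.17) bounds the norm by $ c_2 2^{(\kappa,\lambda+(p^{-1}-q^{-1})_+\e)} \Omega^{\prime l\chi_J}(f,(c_3 2^{-\kappa})^J)_{L_p(D)} $ with $ J=\s(\kappa) $. Group the $ \kappa $ by $ J=\s(\kappa) $; the coordinates outside $ J $ vanish. For $ \kappa^J \ge \e^J $, use the monotonicity of $ \Omega^\prime $ in each variable $ t_j $ on dyadic intervals $ t_j\in[2^{-\kappa_j},2^{-\kappa_j+1}] $. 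This monotonicity is not literally stated in the paper, so it must be checked. Note that $ \Omega^\prime $ is an averaged modulus, and it is not monotone in general. However, the normalized integral over $ tB $ satisfies $ \Omega^\prime(f,t) \le 2^{|J|/p}\, \Omega^\prime(f,t') $ for $ t\le t'\le 2t $ componentwise, because the integration domain grows while the normalization changes by at most a factor $ 2^{|J|} $. This gives
$$
2^{(\kappa^J,\beta^J)}\Omega^{\prime}(f,c_3 2^{-\kappa^J}) \le C\int_{\prod_{j\in J}[2^{-\kappa_j},2^{-\kappa_j+1}]} \prod_{j\in J} t_j^{-\beta_j-1}\,\Omega^{\prime l\chi_J}(f,c_3 t^J)_{L_p(D)}\,dt^J,
$$
where $ \beta=\lambda+(p^{-1}-q^{-1})_+\e $. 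Summing over $ \kappa^J\ge \e^J $ yields the integral over $ (I^d)^J $, which is finite by hypothesis (2.1.18). Summing over the finitely many $ J $ gives absolute convergence in $ L_q(\R^d) $. By Remark \ref{z1}, the series then converges in $ L_q(U) $ along the rectangular partial sums $ \Z_+^d(k) $, to some $ g\in L_q(U) $, with $ \D^\lambda x_k = \sum_{\kappa\in\Z_+^d(k)} (\D^\lambda\mathcal E_{\Kappa^0,\kappa} f)\mid_U \to g $.

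Finally, I identify $ g=\D^\lambda(f\mid_U) $. Since $ \lambda\le m $, Remark \ref{z5} shows that $ \D^\lambda $ of each element of $ \mathcal P^{l-\e,m,U}_\kappa $ is a locally integrable function, so the distributional derivative of $ x_k $ on $ U $ is given by it. For $ \phi\in C_0^\infty(U) $ we get
$$
\int_U g\phi = \lim \int_U \D^\lambda x_k\,\phi = \lim (-1)^{|\lambda|}\int_U x_k \D^\lambda\phi = (-1)^{|\lambda|}\int_U f\D^\lambda\phi .
$$
The first limit uses $ L_q $ convergence against $ \phi\in L_{q'} $. The last uses $ L_p $ convergence of $ x_k $ against $ \D^\lambda\phi\in L_{p'} $. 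Hence $ \D^\lambda(f\mid_U)=g\in L_q(U) $, which is (2.1.19).

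The main obstacle is the discretization step: converting the dyadic sum into the integral in (2.1.18). This requires care with the quasi-monotonicity of the averaged modulus $ \Omega^\prime $ and with the constant $ c_3 $ inside. I would first try the direct comparison of the averages over $ t B $ and $ 2tB $ as above, taking the integration box as $ [2^{-\kappa_j},2^{-\kappa_j+1}] $ so that $ c_3 2^{-\kappa_j} \le c_3 t_j $.
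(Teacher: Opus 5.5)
Your proposal is correct and follows the route the paper indicates (deferring details to [10]): the telescoping identity of Lemma~\ref{l1.2.1} combined with (1.3.17) and (2.1.9), convergence from (2.1.8), and then passage of $\D^\lambda$ to the limit in the distributional sense, using absolute convergence of the differentiated series obtained from (2.1.7), (2.1.17) and a dyadic discretization against (2.1.18). Your quasi-monotonicity bound $\Omega^{\prime l\chi_J}(f,t^J)_{L_p(D)} \le 2^{|J|/p}\,\Omega^{\prime l\chi_J}(f,s^J)_{L_p(D)}$ for $t \le s \le 2t$ is valid, and it is exactly what the discretization step needs.
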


The proof of Proposition \ref{p2.1.6} is given in [10]; it is based on (2.1.8), Lemma \ref{l1.2.1} and (1.3.17), (2.1.9).

\begin{propos}\label{p2.1.7}
Let $ m \in \N^d, $ a domain $ D \subset \R^d $ and its open subset $ U \subset D $ form an $m$-regular pair. Let also $ \alpha \in \R_+^d, 1 \le p < \infty, p \le q \le \infty $ and if the set $ U $ is bounded, let also $ 1 \le q < p, \lambda \in \Z_+^d(m) $ be such that the condition holds
\begin{equation*} \tag{2.1.20}
\alpha -\lambda -(p^{-1} -q^{-1})_+ \e >0.
\end{equation*}
Then

1) for any function $ f \in (S_p^\alpha H)^\prime(D) $ with $ l = l(\alpha) $
in $ L_q(U) $ the equality (2.1.19) holds;

2) there exists a constant $ c_{4}(\alpha,p,q,\lambda,m,D,U) > 0 $ such
that for $ f \in (S_p^\alpha H)^\prime(D) $ the inequality holds
\begin{equation*} \tag{2.1.21}
\| \D^\lambda (f \mid_U) \|_{L_q(U)} \le c_{4}
\| f \|_{(S_p^\alpha H)^\prime(D)}.
\end{equation*}
\end{propos}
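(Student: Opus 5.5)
The plan is to reduce both assertions to Propositions \ref{p2.1.5} and \ref{p2.1.6}. The key estimate is the Hölder-type bound
$$
\Omega^{\prime l \chi_J}(f, t^J)_{L_p(D)} \le (t^J)^{\alpha^J} \| f \|_{(S_p^\alpha H)^\prime(D)}, \qquad f \in (S_p^\alpha H)^\prime(D),\ t^J \in (\R_+^d)^J,\ J \ne \emptyset,
$$
which is immediate from the definition of the norm. Note that $ l = l(\alpha) \in \N^d $, so the operators $ \mathcal E_{\Kappa^0,\kappa}^{l-\e,m,D,U,\Nu} $ with $ l -\e \in \Z_+^d $ make sense, and Propositions \ref{p2.1.5}, \ref{p2.1.6} apply with this $ l $.

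\emph{Assertion 1).} Fix a nonempty $ J $. By the bound above, the function in (2.1.18) is majorized by
$$
c_3^{\alpha^J} \| f \|_{(S_p^\alpha H)^\prime(D)} \prod_{j \in J} t_j^{\alpha_j -\lambda_j -(p^{-1} -q^{-1})_+ -1}.
$$
By (2.1.20), every exponent exceeds $ -1 $, so this product is integrable on $ (I^d)^J $. Hence (2.1.18) holds, and Proposition \ref{p2.1.6} gives (2.1.19) in $ L_q(U) $.

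\emph{Assertion 2).} I would estimate the series (2.1.19) termwise; the norm of the sum is at most the sum of the norms (Remark \ref{z1}).

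For $ \kappa = 0 $: by (2.1.9), $ \mathcal E_{\Kappa^0,0}^{l-\e,m,D,U,\Nu} = E_{\Kappa^0}^{l-\e,m,D,U,\nu_0} $. Condition (2.1.12) gives (2.1.6) with $ \kappa^0 = \Kappa^0 $ and $ \gamma^0 = \Gamma^0 $. Proposition \ref{p2.1.2} with $ \kappa = 0 $ then yields the bound $ c_1 \|f\|_{L_p(D)} $.

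For $ \kappa \ne 0 $: set $ J = \s(\kappa) $. By (2.1.17) and the Hölder bound,
$$
\| \D^\lambda \mathcal E_{\Kappa^0,\kappa}^{l-\e,m,D,U,\Nu} f \|_{L_q(\R^d)} \le c_2 c_3^{\alpha^J} 2^{(\kappa, \lambda +(p^{-1}-q^{-1})_+ \e)} 2^{-(\kappa^J,\alpha^J)} \|f\|_{(S_p^\alpha H)^\prime(D)}.
$$
Since $ \kappa_j = 0 $ for $ j \notin J $, the exponent equals $ -(\kappa, \alpha -\lambda -(p^{-1}-q^{-1})_+\e) $. Summing over $ \kappa \in \Z_+^d $ gives a product of convergent geometric series, because the vector $ \alpha -\lambda -(p^{-1}-q^{-1})_+\e $ is positive by (2.1.20).

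Combining both cases, and using $ \|\cdot\|_{L_q(U)} \le \|\cdot\|_{L_q(\R^d)} $, yields (2.1.21).

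The main point needing care is that Propositions \ref{p2.1.2} and \ref{p2.1.6} hypotheses match: the parameter restrictions on $ p, q $ (including the bounded-$U$ case) are the same, and the pair is $m$-regular with $ \lambda \in \Z_+^d(m) $. Beyond this, the argument is routine.
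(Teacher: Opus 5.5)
Your proposal is correct and follows the paper's own argument. For item 1), you check (2.1.18) from the bound $\Omega^{\prime l \chi_J}(f,t^J)_{L_p(D)} \le (t^J)^{\alpha^J} \| f \|_{(S_p^\alpha H)^\prime(D)}$ together with (2.1.20), and then invoke Proposition \ref{p2.1.6}; for item 2), you estimate the series (2.1.19) termwise, using (2.1.7) for $\kappa = 0$ (via (2.1.9), (1.3.16)) and (2.1.17) for $\kappa \ne 0$, so that (2.1.20) makes the resulting geometric series converge.
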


The validity of item 1) follows from Proposition \ref{p2.1.6}, since in view of (2.1.20), (2.1.18) holds, and to obtain (2.1.21) it remains to use (2.1.19), and apply (2.1.7) (see (2.1.9), (1.3.16)) and (2.1.17) (see [10]).

Next, defining
$$
\Sigma^d = \{ \sigma \in \Z^d: \sigma_j \in \{-1,1\}, j =1,\ldots,d\},
$$
for $ \sigma \in \Sigma^d $ denote by $ \bm h_\sigma $ the mapping that assigns to each function $ f $ defined on some set $ S \subset \R^d $ the function $ \bm h_\sigma f $ defined on the set $ \{ x \in \R^d: \sigma x \in S\} = \sigma^{-1} S = \sigma S $ by the equality $ (\bm h_\sigma f)(x) = f(\sigma x). $

Let us note some useful properties of the mappings $ \bm h_\sigma $ (see [10]).
Since for $ \sigma \in \Sigma^d $ the mapping $ \R^d \ni x \mapsto \sigma x \in \R^d $ is bijective, the mapping $ \bm h_\sigma $ is a bijection of the set of all functions with domain in $ \R^d $ onto itself.
In this case, the inverse mapping $ \bm h_\sigma^{-1} $ for $ f: S \mapsto \R $ is given by
\begin{equation*} \tag{2.1.22}
(\bm h_\sigma^{-1} f)(x) = f(\sigma^{-1} x) =
f(\sigma x) = (\bm h_\sigma f)(x), x \in \sigma S.
\end{equation*}

For $ \sigma \in \Sigma^d $ for any sets $ S \subset S^\prime \subset \R^d $ and any function $ f: S^\prime \mapsto \R $ the equality holds
\begin{equation*} \tag{2.1.23}
\bm h_\sigma (f \mid_S) = (\bm h_\sigma f) \mid_{\sigma^{-1} S}.
\end{equation*}

For $ \sigma \in \Sigma^d $ for an open set $ D \subset \R^d, 1 \le p \le \infty $ and $ f \in L_p(D) $ the equality holds
\begin{equation*} \tag{2.1.24}
\| \bm h_\sigma f \|_{L_p(\sigma^{-1} D)} =
\| f \|_{L_p(D)},
\text{ and, hence, }
\bm h_\sigma \in \mathcal B(L_p(D),
L_p(\sigma^{-1} D)).
\end{equation*}

\begin{remark} \label{z7}
Note also that for $ \sigma \in \Sigma^d $, due to (2.1.24) and the equality
\begin{equation*}
\sigma Q_{\kappa,\nu} = Q_{\kappa,\sigma(\nu +\chi_J)}, \\
\text{ where } J = \{ j \in \Nu_{1,d}: \sigma_j = -1\}, \kappa \in \Z_+^d,
\nu \in \Z^d,
\end{equation*}
the equality holds
\begin{equation*}
\bm h_\sigma(\tilde L_1^{\loc}(D)) =
\tilde L_1^{\loc}(\sigma^{-1} D), \\
D  \text{ -- arbitrary open set in } \R^d.
\end{equation*}
\end{remark}

\begin{lemma}\label{l2.1.8}
Let $ \sigma \in \Sigma^d, D $ be an open set in $ \R^d, 1 \le p, q \le \infty, \lambda \in \Z_+^d $ and $ f \in L_p(D), \D^\lambda f \in L_q(D). $ Then the equality holds
\begin{equation*} \tag{2.1.25}
\D^\lambda (\bm h_\sigma f) = \sigma^\lambda \bm h_\sigma (\D^\lambda f).
\end{equation*}
\end{lemma}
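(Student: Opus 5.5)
The plan is to prove (2.1.25) by testing against smooth compactly supported functions, i.e. through the definition of the generalized derivative, with the change of variables $ x \mapsto \sigma x $ doing the work. First observe that for $ \sigma \in \Sigma^d $ the map $ x \mapsto \sigma x $ is a linear isometry of $ \R^d $ with Jacobian of absolute value $ 1 $, and that $ \sigma^{-1} = \sigma $. Hence it maps $ \sigma^{-1} D = \sigma D $ bijectively onto $ D $. It also maps compact subsets of $ \sigma D $ onto compact subsets of $ D $. Consequently $ \bm h_\sigma $ maps $ C_0^\infty(D) $ bijectively onto $ C_0^\infty(\sigma D) $, and by (2.1.24) $ \bm h_\sigma f \in L_p(\sigma D) $ and $ \bm h_\sigma(\D^\lambda f) \in L_q(\sigma D) $. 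So both sides of (2.1.25) are locally integrable functions on $ \sigma D $, and it suffices to check that they define the same distribution.

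Fix $ \phi \in C_0^\infty(\sigma D) $ and put $ \psi = \bm h_\sigma \phi \in C_0^\infty(D) $. By the chain rule for smooth functions, $ \D^\lambda \psi = \sigma^\lambda \bm h_\sigma(\D^\lambda \phi) $; this holds because each $ \partial/\partial x_j $ applied to $ \phi(\sigma x) $ produces one factor $ \sigma_j $. Since $ \sigma^\lambda = \pm 1 $, we get $ \bm h_\sigma(\D^\lambda\phi) = \sigma^\lambda \D^\lambda \psi $. The computation then runs as follows:
\begin{multline*}
\int_{\sigma D} (\bm h_\sigma f)(x) \D^\lambda \phi(x)\,dx = \int_D f(y) (\bm h_\sigma \D^\lambda \phi)(y)\,dy = \sigma^\lambda \int_D f(y) \D^\lambda \psi(y)\,dy = \\
(-1)^{|\lambda|} \sigma^\lambda \int_D \D^\lambda f(y) \psi(y)\,dy = (-1)^{|\lambda|} \int_{\sigma D} \sigma^\lambda (\bm h_\sigma \D^\lambda f)(x) \phi(x)\,dx .
\end{multline*}
The first and last equalities use the substitution $ y = \sigma x $ together with $ \bm h_\sigma \psi = \phi $. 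The middle step uses the definition of the generalized derivative $ \D^\lambda f $ on $ D $. This is exactly the statement that $ \sigma^\lambda \bm h_\sigma(\D^\lambda f) $ is the generalized derivative $ \D^\lambda(\bm h_\sigma f) $ on $ \sigma D $, which is (2.1.25).

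There is no real obstacle. The only points needing care are bookkeeping: the sign $ \sigma^\lambda $ has to be tracked through the chain rule and the change of variables, and the supports have to be checked to transform correctly so that $ \psi $ is an admissible test function in $ D $. Local integrability of all integrands is guaranteed by the hypotheses $ f \in L_p(D) $ and $ \D^\lambda f \in L_q(D) $.
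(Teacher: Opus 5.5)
Your proof is correct: testing against $\phi \in C_0^\infty(\sigma D)$, passing to $\psi = \bm h_\sigma \phi \in C_0^\infty(D)$ with $\D^\lambda \psi = \sigma^\lambda \bm h_\sigma(\D^\lambda \phi)$, and using the measure-preserving substitution $y = \sigma x$ gives exactly the identity defining $\sigma^\lambda \bm h_\sigma(\D^\lambda f)$ as the generalized derivative $\D^\lambda(\bm h_\sigma f)$ on $\sigma D$. The paper gives no proof of Lemma~\ref{l2.1.8} here (it is taken from [10]), and your change of variables in the definition of the generalized derivative is the standard way to establish it.
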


\begin{lemma} \label{l2.1.9}
Let $ D $ be an open set in $ \R^d, 1 \le p < \infty,
\sigma \in \Sigma^d. $ Then for $ f \in L_p(D), l \in \N^d, J \subset \Nu_{1,d}: J \ne \emptyset, $ for $ t^J
\in (\R_+^d)^J $ the equality holds
\begin{equation*} \tag{2.1.26}
\Omega^{\prime l \chi_{J}} (\bm h_\sigma f,
t^{J})_{L_p(\sigma^{-1} D)} =
\Omega^{\prime l \chi_{J}} (f, t^{J})_{L_p(D)},
\end{equation*}
and
\begin{multline*} \tag{2.1.27}
\bm h_\sigma ((S_p^\alpha H)^\prime(D)) \subset (S_p^\alpha H)^\prime(\sigma^{-1} D),
\| \bm h_\sigma f \|_{(S_p^\alpha H)^\prime
(\sigma^{-1} D)} =
\| f \|_{(S_p^\alpha H)^\prime(D)},\\ f \in (S_p^\alpha H)^\prime(D),
\alpha \in \R_+^d.
\end{multline*}
\end{lemma}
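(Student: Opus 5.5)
The plan is a direct change of variables, since $\bm h_\sigma$ is a linear isometry given by a coordinatewise reflection. First I will relate the differences of $\bm h_\sigma f$ to those of $f$.

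Fix $\xi \in \R^d$ and $x \in (\sigma^{-1}D)_\xi^{l\chi_J}$. By definition $x + t(l\chi_J)\xi \in \sigma^{-1} D$ for all $t \in \overline I^d$, which is equivalent to $\sigma x + t (l\chi_J)(\sigma\xi) \in D$, because $\sigma^2 = \e$ and multiplication is coordinatewise. Hence
\[
(\sigma^{-1}D)_\xi^{l\chi_J} = \sigma \bigl(D_{\sigma\xi}^{l\chi_J}\bigr).
\]
From the explicit formula $\Delta_\xi^{l\chi_J} g(x) = \sum_{k \in \Z_+^d(l\chi_J)} (-\e)^{l\chi_J -k} C_{l\chi_J}^k g(x+k\xi)$ with $g = \bm h_\sigma f$, we get $g(x+k\xi) = f(\sigma x + k \sigma\xi)$, so
\[
(\Delta_\xi^{l\chi_J}\bm h_\sigma f)(x) = (\Delta_{\sigma\xi}^{l\chi_J} f)(\sigma x).
\]
The map $x \mapsto \sigma x$ preserves Lebesgue measure, so
\[
\|\Delta_\xi^{l\chi_J}\bm h_\sigma f\|_{L_p((\sigma^{-1}D)_\xi^{l\chi_J})} = \|\Delta_{\sigma\xi}^{l\chi_J} f\|_{L_p(D_{\sigma\xi}^{l\chi_J})}.
\]

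Next I will integrate the $p$-th power of this identity over $\xi^J \in (tB^d)^J$. The map $\xi^J \mapsto (\sigma\xi)^J$ is a measure-preserving bijection of the symmetric box $(tB^d)^J$ onto itself, and the normalizing factor $(2t^J)^{-\e^J}$ does not change. This gives (2.1.26) for $p<\infty$, which is the case assumed in the lemma. If $p=\infty$ were needed, the same identity with the $\supvrai$ over the symmetric box would work.

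Finally, (2.1.27) follows by combining (2.1.26) for every nonempty $J$ with $l = l(\alpha)$, and using $\|\bm h_\sigma f\|_{L_p(\sigma^{-1}D)} = \|f\|_{L_p(D)}$ from (2.1.24). Together these show that every quantity in the definition of the norm coincides, so membership and equality of the norms both follow.

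The main obstacle is only bookkeeping. One must check that $\sigma$ being coordinatewise commutes with the multi-index shifts $k\xi$, and that $\sigma^{-1}=\sigma$, so that the domains $D_\xi^{l}$ transform correctly.
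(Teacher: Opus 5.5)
Your proof is correct: the paper gives no argument for this lemma (it cites [10]), and your direct change of variables is the natural route to (2.1.26). That argument rests on the identities $(\sigma^{-1}D)_\xi^{l\chi_J}=\sigma\bigl(D_{\sigma\xi}^{l\chi_J}\bigr)$ and $\Delta_\xi^{l\chi_J}(\bm h_\sigma f)=\bm h_\sigma\bigl(\Delta_{\sigma\xi}^{l\chi_J}f\bigr)$, followed by the measure-preserving reflection $\xi^J\mapsto\sigma^J\xi^J$ of the symmetric box $(tB^d)^J$; deducing (2.1.27) from (2.1.26) with $l=l(\alpha)$ and (2.1.24) is then immediate, since every term in the norm is preserved.
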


Lemmas \ref{l2.1.8}, \ref{l2.1.9} are taken from [10].

Now the following proposition can be established.

\begin{propos}\label{p2.1.10}
For $ m \in \N^d $, let a domain $ D \subset \R^d $ and its open subset $ U \subset D $ be such that there exists $ \sigma \in
\Sigma^d $ for which the domain $ \sigma^{-1} D $ and its open subset $ \sigma^{-1} U \subset \sigma^{-1} D $ form an $m$-regular pair.
Then for any $ l \in \Z_+^d $ there exists a family of linear operators
$$
\mathfrak E_\kappa^{l,m,D,U}: \bm h_\sigma^{-1}(\tilde L_1^{\loc}(\sigma^{-1} D)) =
\tilde L_1^{\loc}(D) \mapsto L_1^{\loc}(\R^d), \kappa \in \Z_+^d,
$$
for which for $ 1 \le p < \infty, p \le q \le
\infty, $ and also, if the set $ U $ is bounded, also for $ 1 \le q < p $ the following holds:

1) for $ l \in \Z_+^d, \lambda \in \Z_+^d(m) $ there exists a constant $ c_5(l,m,D,U,\lambda,p,q) > 0 $ such that for $ f \in L_p(D) $ the inequality holds
\begin{equation*} \tag{2.1.28}
\| \D^\lambda \mathfrak E_0^{l,m,D,U} f \|_{L_q(\R^d)} \le c_5 \| f \|_{L_p(D)};
\end{equation*}

2) for $ l \in \N^d, \lambda \in \Z_+^d(m) $ there exist constants $ c_6(l,m,D,U,\lambda,p,q) > 0,
c_7(m,D,U) > 0 $ such that for $ \kappa \in \Z_+^d \setminus \{0\} $ and for $ f \in L_p(D) $ the inequality holds
\begin{multline*} \tag{2.1.29}
\| \D^\lambda \mathfrak E_{\kappa}^{l -\e,m,D,U} f
\|_{L_q(\R^d)} \le c_6 2^{(\kappa, \lambda +(p^{-1} -
q^{-1})_+ \e)} \Omega^{\prime l \chi_{\s(\kappa)}}(f, (c_7 2^{-\kappa})^{\s(\kappa)})_{L_p(D)};
\end{multline*}

3) for $ \alpha \in \R_+^d, \lambda \in \Z_+^d(m), $ satisfying condition (2.1.20),
for any function $ f \in (S_p^\alpha H)^\prime(D) $ and $ l = l(\alpha) $
in $ L_q(U) $ the equality holds
\begin{equation*} \tag{2.1.30}
\D^\lambda (f \mid_U) = \sum_{\kappa \in \Z_+^d}
(\D^\lambda (\mathfrak E_{\kappa}^{l -\e,m,D,U} f)) \mid_U,
\end{equation*}
and there exists a constant $ c_8(\alpha,m,D,U,\lambda,p,q) > 0 $ such that for any function $ f \in (S_p^\alpha H)^\prime(D) $ the inequality holds
\begin{equation*} \tag{2.1.31}
\| \D^\lambda (f \mid_U) \|_{L_q(U)} \le c_8
\| f \|_{(S_p^\alpha H)^\prime(D)}.
\end{equation*}
\end{propos}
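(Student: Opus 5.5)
The plan is to transport everything to the reflected domain and back via $\bm h_\sigma$. Put $D' = \sigma^{-1} D$ and $U' = \sigma^{-1} U$; by hypothesis $(D',U')$ is an $m$-regular pair, with constants $\Kappa^0 = \Kappa^0(m,D',U')$ and a family $\Nu = \Nu^{m,D',U'}$ as in Definition \ref{d1}. For $l \in \Z_+^d$ and $\kappa \in \Z_+^d$ define
$$
\mathfrak E_\kappa^{l,m,D,U} f = \bm h_\sigma^{-1} \mathcal E_{\Kappa^0,\kappa}^{l,m,D',U',\Nu} (\bm h_\sigma f), \quad f \in \tilde L_1^{\loc}(D).
$$
By Remark \ref{z7}, $\bm h_\sigma f \in \tilde L_1^{\loc}(D')$. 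The operator $\mathcal E$ maps into $\mathcal P_{\Kappa^0+\kappa}^{l,m,U'} \subset L_1^{\loc}(\R^d)$ (Remark \ref{z2}), and $\bm h_\sigma^{-1} = \bm h_\sigma$ preserves $L_1^{\loc}(\R^d)$ by (2.1.22). So these are well-defined linear operators into $L_1^{\loc}(\R^d)$. Boundedness of $U$ is equivalent to boundedness of $U'$, so the parameter restrictions transfer verbatim.

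For 1), use that $\mathcal E_{\Kappa^0,0} = E_{\Kappa^0}^{l,m,D',U',\nu_0}$ by (2.1.9) and (1.3.16). Condition 1) of Definition \ref{d1}, i.e. (2.1.12), gives (2.1.6) with $\kappa^0 = \Kappa^0$ and $\gamma^0 = \Gamma^0$. Hence (2.1.7) at $\kappa = 0$ applies to $\bm h_\sigma f$.

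Next we move $\D^\lambda$ through $\bm h_\sigma$ with Lemma \ref{l2.1.8} on $\R^d$:
$$
\D^\lambda \bm h_\sigma g = \sigma^\lambda \bm h_\sigma \D^\lambda g, \qquad |\sigma^\lambda| = 1 .
$$
This needs $\D^\lambda g \in L_q$, which holds for $g = \mathcal E(\bm h_\sigma f)$ by (2.1.5) and (2.1.7). Combining with (2.1.24) gives
$$
\|\D^\lambda \mathfrak E_0 f\|_{L_q(\R^d)} = \|\D^\lambda E(\bm h_\sigma f)\|_{L_q(\R^d)} \le c_1 \|\bm h_\sigma f\|_{L_p(D')} = c_1 \|f\|_{L_p(D)} ,
$$
which is (2.1.28).

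Item 2) goes the same way. Replace (2.1.7) by (2.1.17), applied to $\bm h_\sigma f$ on $D'$, and then use (2.1.26), which says
$$
\Omega^{\prime l\chi_{\s(\kappa)}}(\bm h_\sigma f,\cdot)_{L_p(D')} = \Omega^{\prime l\chi_{\s(\kappa)}}(f,\cdot)_{L_p(D)} .
$$
This gives (2.1.29) with $c_6 = c_2$ and $c_7 = c_3$.

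For 3), by (2.1.27) we have $\bm h_\sigma f \in (S_p^\alpha H)'(D')$ with the same norm. Proposition \ref{p2.1.7} then gives (2.1.19) in $L_q(U')$ for $\bm h_\sigma f$, together with the bound (2.1.21). The next step is to apply $\bm h_\sigma^{-1}$ to the series (2.1.19), using three facts:
\begin{itemize}
\item $\bm h_\sigma^{-1}$ is an isometry $L_q(U') \to L_q(U)$ by (2.1.24), so it commutes with limits over $\mn(k) \to \infty$ of partial sums;
\item restrictions commute with $\bm h_\sigma$ by (2.1.23);
\item $\D^\lambda(\bm h_\sigma f|_{U'}) = \sigma^\lambda \bm h_\sigma \D^\lambda(f|_U)$ by Lemma \ref{l2.1.8}.
\end{itemize}
Its hypothesis that $\D^\lambda(f|_U) \in L_q$ follows by applying the lemma with $\sigma$ to the function on $U'$ whose derivative is known to lie in $L_q(U')$, using $\sigma^{-1} = \sigma$. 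Multiplying the resulting identity by $\sigma^\lambda = \pm1$ on both sides yields (2.1.30). Estimate (2.1.31) then follows from (2.1.21) by (2.1.24) and (2.1.27), with $c_8 = c_4$.

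The main obstacle is the bookkeeping with distributional derivatives on $\R^d$ in Lemma \ref{l2.1.8}, namely checking its integrability hypotheses, which I take from (2.1.5) and the estimates just established. Everything else is a direct transfer of earlier results.
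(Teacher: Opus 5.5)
Your proposal is correct and follows the paper's proof: you define $\mathfrak E_\kappa^{l,m,D,U}$ by conjugating $\mathcal E_{\Kappa^0,\kappa}^{l,m,\sigma^{-1} D,\sigma^{-1} U,\Nu}$ with $\bm h_\sigma$, as in (2.1.32). You then transfer (2.1.7), (2.1.17) and Proposition \ref{p2.1.7} back to $D$ via (2.1.22)--(2.1.27) and Lemma \ref{l2.1.8}, cancelling the factor $\sigma^\lambda$ at the end, and your extra checks fill in details the paper leaves implicit: that (2.1.12) yields (2.1.6), and that $\D^\lambda (f \mid_U) \in L_q(U)$ follows from Lemma \ref{l2.1.8} applied on $\sigma^{-1} U$.
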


\begin{proof}
Under the conditions of the proposition, taking into account Remark \ref{z7} before Lemma \ref{l2.1.8},
define for $ l \in \Z_+^d $ the family of operators
$$
\mathfrak E_\kappa^{l,m,D,U}:
\bm h_\sigma^{-1}(\tilde L_1^{\loc}(\sigma^{-1} D)) =
\tilde L_1^{\loc}(D) \mapsto L_1^{\loc}(\R^d), \kappa \in \Z_+^d,
$$
by setting (see (2.1.22), (2.1.3), (2.1.9))
\begin{multline*} \tag{2.1.32}
\mathfrak E_0^{l,m,D,U} = \bm h_\sigma^{-1}
\mathcal E_{\Kappa^0,0}^{l,m,\sigma^{-1} D,\sigma^{-1} U,\Nu} \bm h_\sigma =
\bm h_\sigma^{-1}
E_{\Kappa^0}^{l,m,\sigma^{-1} D,\sigma^{-1} U,\nu_{0}} \bm h_\sigma =\\
\bm h_\sigma E_{\Kappa^0}^{l,m,\sigma D,\sigma U,\nu_{0}} \bm h_\sigma,
\mathfrak E_\kappa^{l,m,D,U} = \bm h_\sigma^{-1}
\mathcal E_{\Kappa^0, \kappa}^{l,m,\sigma^{-1} D,
\sigma^{-1} U,\Nu} \bm h_\sigma =\\
\bm h_\sigma \mathcal E_{\Kappa^0, \kappa}^{l,m,\sigma D,\sigma U,\Nu}
\bm h_\sigma, \kappa \in \Z_+^d \setminus \{0\},
\end{multline*}
where
\begin{multline*}
\Kappa^0 = \Kappa^0(m,\sigma^{-1} D, \sigma^{-1} U) \in \Z_+^d,
\Nu = \Nu^{m,\sigma^{-1} D,\sigma^{-1} U} =\\
\{ \nu_\kappa = \nu_{\kappa}^{m,\sigma^{-1} D,
\sigma^{-1} U}:
N_{\Kappa^0 +\kappa}^{m,\sigma^{-1} U} \mapsto \Z^d, \kappa \in \Z_+^d\}
\end{multline*}
are the objects from Definition \ref{d1}.

Let us verify the validity of relations (2.1.28) -- (2.1.31).
Under the conditions of item 1), using (2.1.32), (2.1.25), (2.1.24), (2.1.7) and again (2.1.24), for $ f \in L_p(D) $ we derive (2.1.28).

Next, under the conditions of item 2), applying (2.1.32), (2.1.25), (2.1.24), (2.1.17), (2.1.26), for $ \kappa \in \Z_+^d \setminus \{0\} $ and for $ f \in L_p(D) $ we obtain (2.1.29).

Finally, under the conditions of item 3), in view of (2.1.27), according to item 1) of Proposition \ref{p2.1.7}
for $ f \in (S_p^\alpha H)^\prime(D) $ and $ l = l(\alpha) $ in $ L_q(\sigma^{-1} U) $ the equality holds
\begin{equation*}
\D^\lambda ((\bm h_\sigma f) \mid_{\sigma^{-1} U}) = \sum_{\kappa \in \Z_+^d}
(\D^\lambda (\mathcal E_{\Kappa^0, \kappa}^{l -\e,m,\sigma^{-1} D,\sigma^{-1} U,\Nu}
(\bm h_\sigma f))) \mid_{\sigma^{-1} U},
\end{equation*}
from which, due to (2.1.24), we conclude that in $ L_q(\sigma^{-1} \sigma^{-1} U) =
L_q(U) $ the equality holds
\begin{multline*} \tag{2.1.33}
\bm h_\sigma (\D^\lambda ((\bm h_\sigma f) \mid_{\sigma^{-1} U})) = \\
\sum_{\kappa \in \Z_+^d} \bm h_\sigma
((\D^\lambda (\mathcal E_{\Kappa^0, \kappa}^{l -\e,m,\sigma^{-1} D,\sigma^{-1} U,\Nu}
(\bm h_\sigma f))) \mid_{\sigma^{-1} U}).
\end{multline*}

Using (2.1.25), (2.1.23), (2.1.22), we have
\begin{equation*} \tag{2.1.34}
\bm h_\sigma (\D^\lambda ((\bm h_\sigma f) \mid_{\sigma^{-1} U})) =
\sigma^{\lambda} \D^\lambda (f \mid_{U}).
\end{equation*}
Applying (2.1.23), (2.1.25), (2.1.32), we obtain
\begin{equation*} \tag{2.1.35}
\bm h_\sigma
((\D^\lambda (\mathcal E_{\Kappa^0, \kappa}^{l -\e,m,\sigma^{-1} D,\sigma^{-1} U,\Nu}
(\bm h_\sigma f))) \mid_{\sigma^{-1} U}) =
\sigma^{\lambda} (\D^\lambda (\mathfrak E_{\kappa}^{l -\e,m,D,U} f)) \mid_U.
\end{equation*}
Substituting (2.1.34) and (2.1.35) into (2.1.33), in $ L_q(U) $ we arrive at the equality
\begin{equation*}
\sigma^\lambda \D^\lambda (f \mid_U) = \sum_{\kappa \in \Z_+^d}
\sigma^\lambda (\D^\lambda (\mathfrak E_{\kappa}^{l -\e,m,D,U} f)) \mid_U,
\end{equation*}
from which (2.1.30) follows.

It remains to verify the validity of (2.1.31). To do this, under the conditions of item 3) of the proposition, taking into account (2.1.27), based on (2.1.21), we conclude that for $ f \in (S_p^\alpha H)^\prime(D) $ the relation holds
\begin{equation*}
\| \D^\lambda ((\bm h_\sigma f) \mid_{\sigma^{-1} U}) \|_{L_q(\sigma^{-1} U)}
\le c_4 \| \bm h_\sigma f \|_{(S_p^\alpha H)^\prime(\sigma^{-1} D)}.
\end{equation*}
From here, taking into account (2.1.27), and also the fact that due to (2.1.24), (2.1.34) the relation holds
$$
\| \D^\lambda ((\bm h_\sigma f) \mid_{\sigma^{-1} U}) \|_{L_q(\sigma^{-1} U)} = \\
\| \D^\lambda (f \mid_{U}) \|_{L_q(U)},
$$
we obtain (2.1.31).
\end{proof}

\begin{remark}\label{z8}
If for a domain $ D \subset \R^d $ and its open subset $ U \subset D $
there exist $ \delta \in \R_+^d $ and $ \sigma \in \Sigma^d, $ for which $ (U +\sigma \delta I^d) \subset D, $ then $ (\sigma^{-1} U +\delta I^d) \subset
\sigma^{-1} D, $ and, consequently, by Lemma \ref{l2.1.4}, $ (\sigma^{-1} D, \sigma^{-1} U) $ is an $m$-regular pair for any $ m \in \N^d. $
\end{remark}

Relying on (2.1.31), it is easy to obtain a proof of Theorem \ref{t2.1.11} (see [10]).

\begin{theorem}\label{t2.1.11}
For $ m \in \N^d $, let $ D $ be a domain in $ \R^d, $ for which there exists a system of open subsets $ \{ U_i \subset D, i =1,\ldots,\mathcal I\} $

such that for $ i =1,\ldots,\mathcal I $ there exists $ \sigma^i \in \Sigma^d, $
for which the domain $ (\sigma^i)^{-1} D $ and its open subset $ (\sigma^i)^{-1} U_i \subset (\sigma^i)^{-1} D $ form an $m$-regular pair and $ D = \cup_{i =1}^{\mathcal I} U_i. $
Then for $ \alpha \in \R_+^d, 1 \le p < \infty, p \le q \le \infty, $ and also, if $ D $ is a bounded domain, also for $ 1 \le q < p, \lambda \in
\Z_+^d(m), $ satisfying condition (2.1.20), there exists a constant $ c_{9}(\alpha,p,q,\lambda,m,D) > 0 $ such that for any function $ f \in (S_p^\alpha H)^\prime(D) $ the inequality holds
\begin{equation*}
\| \D^\lambda f \|_{L_q(D)} \le c_{9}
\| f \|_{(S_p^\alpha H)^\prime(D)}.
\end{equation*}
\end{theorem}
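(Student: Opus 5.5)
The plan is to reduce the global estimate to the local estimates (2.1.31) on the sets $U_i$ and then glue them together using the finite cover $D = \cup_{i=1}^{\mathcal I} U_i$.

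For each $i = 1,\ldots,\mathcal I$, the hypothesis says that $((\sigma^i)^{-1} D, (\sigma^i)^{-1} U_i)$ is an $m$-regular pair. So the pair $(D,U_i)$ satisfies the conditions of Proposition \ref{p2.1.10} with $\sigma = \sigma^i$. If $D$ is bounded, each $U_i \subset D$ is bounded as well, so the case $1 \le q < p$ is also covered. Item 3) of Proposition \ref{p2.1.10} then gives, for any $f \in (S_p^\alpha H)^\prime(D)$:
\begin{itemize}
\item the generalized derivative $\D^\lambda (f \mid_{U_i})$ lies in $L_q(U_i)$;
\item $\| \D^\lambda (f \mid_{U_i}) \|_{L_q(U_i)} \le c_8^{(i)} \| f \|_{(S_p^\alpha H)^\prime(D)}$, where $c_8^{(i)} = c_8(\alpha,m,D,U_i,\lambda,p,q)$.
\end{itemize}

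The first step is to identify the generalized derivative $\D^\lambda f$ on $D$ with an $L_q^{\loc}$ function. Generalized derivatives are local: if $g_i = \D^\lambda(f\mid_{U_i})$ on each $U_i$, then $g_i = g_j$ almost everywhere on $U_i \cap U_j$. This holds because both equal the distributional derivative of $f\mid_{U_i\cap U_j}$ on that open set. Hence the $g_i$ glue to a single measurable function $g$ on $D$.

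To check that $g = \D^\lambda f$ in $\D'(D)$, take a test function $\varphi \in C_0^\infty(D)$. Choose a smooth partition of unity $\{\eta_i\}$ subordinate to the cover $\{U_i\}$ on a neighbourhood of $\supp \varphi$. This is possible because $\supp\varphi$ is compact and covered by finitely many open sets. Then $\varphi = \sum_i \eta_i\varphi$ with $\eta_i\varphi \in C_0^\infty(U_i)$. Applying the definition of the generalized derivative on each $U_i$ and summing over $i$ gives $\int_D f\,\D^\lambda\varphi\,dx = (-1)^{|\lambda|}\int_D g\varphi\,dx$. This step is where care is needed, namely that the generalized derivative on $D$ is determined by its restrictions to the $U_i$. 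It is standard, and I expect no real difficulty.

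Finally, the norm estimate follows by subadditivity over the cover. For $q<\infty$,
\[
\|\D^\lambda f\|_{L_q(D)}^q \le \sum_{i=1}^{\mathcal I} \|\D^\lambda (f\mid_{U_i})\|_{L_q(U_i)}^q .
\]
For $q=\infty$, the $L_\infty(D)$ norm is the maximum of the $L_\infty(U_i)$ norms. Either way,
\[
\|\D^\lambda f\|_{L_q(D)} \le \sum_{i=1}^{\mathcal I} c_8^{(i)} \|f\|_{(S_p^\alpha H)^\prime(D)},
\]
so one can take $c_9 = \sum_{i=1}^{\mathcal I} c_8^{(i)}$. This constant depends only on $\alpha,p,q,\lambda,m$ and $D$, since the cover is fixed by $D$.
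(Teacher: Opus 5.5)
Your proposal is correct and follows the route the paper indicates. The paper only says that the theorem follows easily from (2.1.31), and your argument fills this in: apply (2.1.31) of Proposition~\ref{p2.1.10} on each $U_i$ with $\sigma=\sigma^i$ (each $U_i$ is bounded when $D$ is, which covers the case $q<p$), then sum over the finite cover, as the paper does with the sets $\bm u^i$ in the proof of Lemma~\ref{l2.2.1}. Your partition-of-unity step just makes explicit the standard fact, left implicit in the paper, that the local derivatives $\D^\lambda(f\mid_{U_i})$ patch together into the generalized derivative $\D^\lambda f\in L_q(D)$.
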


\subsection{} 
In this subsection, an upper estimate of the quantity indicated in the title of the section will be obtained.

Let us recall the formulation of the general problem, a special case of which is the problem considered in this paper.

Let $ X,Y $ be Banach spaces, $ \mathcal B(X,  Y)  $ be the Banach space of continuous linear operators $ T:  X \mapsto Y $ with the usual norm
$$
\|T\|_{\mathcal B(X, Y)} =\sup_{ x \in B(X)} \|T x\|_Y,
$$

$ U : D(U) \mapsto Y $ be a linear operator with domain $ D(U) \subset X. $ Let also a set $ K \subset D(U) $ and $ \rho >0. $

It is required to describe the behavior, depending on $ \rho $, of the quantity
$$
E(U,X,Y,K,\rho) =\inf_{\{ T  \in  \mathcal B(X,Y):
\|T\|_{\mathcal B(X,Y)} \le \rho\}} \sup_{x \in K}
\|U x -T x\|_Y.
$$

\begin{lemma}\label{l2.2.1}

For $ m \in \N^d $, let $ D $ be a domain in $ \R^d, $ for which there exists a system of open subsets $ \{ U_i \subset D, i =1,\ldots,\mathcal I\} $
such that for $ i =1,\ldots,\mathcal I $ there exists $ \sigma^i \in \Sigma^d, $
for which the domain $ (\sigma^i)^{-1} D $ and its open subset $ (\sigma^i)^{-1} U_i \subset (\sigma^i)^{-1} D $ form an $m$-regular pair and $ D = \cup_{i =1}^{\mathcal I} U_i. $
Let also for $ \alpha \in \R_+^d, \lambda \in
\Z_+^d(m) $ for $ 1 \le p < \infty,
p \le q \le \infty, $ and also, if $ D $ is a bounded domain, also for $ 1 \le q < p $ the condition (2.1.20) holds,
and besides, for $ 1 \le s < \infty, s \le q \le \infty, $ and also, if $ D $ is a bounded domain, also for $ 1 \le q < s $ the condition $ \lambda +(s^{-1} -q^{-1})_+ \e >0, 1 \le \theta \le \infty $ holds. Put $ \gamma = \alpha -\lambda -(p^{-1} -q^{-1})_+ \e $ and $ \tau = \lambda +(s^{-1} -q^{-1})_+ \e. $
Then there exist constants
$ c_1(\alpha,p,\theta,\lambda,q,s,m,D) > 0 $ and
$ c_2(\alpha,p,\theta,\lambda,q,s,m,D) >0 $ such that for any $ r \in \N $ one can construct a linear operator
$$
T_r = T_r^{\alpha,\lambda,m,D}: \tilde L_1^{\loc}(D) \mapsto L_1^{\loc}(\R^d),
$$
possessing the following properties:

1) for any function $ f \in (\mathcal S_{p,\theta}^\alpha
\mathcal B)^\prime(D) $ the inequality holds
\begin{equation*} \tag{2.2.1}
\| \D^\lambda f -T_r f \|_{L_q(D)} \le
c_1 2^{-\mn(\tau^{-1} \gamma) r}
r^{(\cmn(\tau^{-1} \gamma) -1) (1 -1/\theta)},
\end{equation*}

2) the inequality holds
\begin{equation*} \tag{2.2.2}
\| T_r \|_{\mathcal B(L_s(D), L_q(D))} \le
c_2 2^r r^{\cmn(\tau^{-1} \gamma) -1}.
\end{equation*}
\end{lemma}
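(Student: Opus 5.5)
We construct $T_r$ as a hyperbolic-cross truncation of the series (2.1.30), glued over the cover $\{U_i\}$. Since $D=\cup_{i=1}^{\mathcal I} U_i$, we fix for each $i$ the operators $\mathfrak E_\kappa^{l-\e,m,D,U_i}$ from Proposition \ref{p2.1.10} with $l=l(\alpha)$ (built with $\sigma^i$). For each $i$ we then set
$$
T_r^{(i)} f=\sum_{\kappa\in\Z_+^d:(\kappa,\tau)\le r}\D^\lambda\mathfrak E_\kappa^{l-\e,m,D,U_i} f .
$$
To obtain a single operator into $L_1^{\loc}(\R^d)$, take the disjoint measurable sets $V_1=U_1$, $V_i=U_i\setminus\cup_{k<i}U_k$, and put
$$
T_r f=\sum_i \chi_{V_i}\,T_r^{(i)} f .
$$
Then $\|g\|_{L_q(D)}\le\sum_i\|g\|_{L_q(U_i)}$, and $\|T_r f\|_{L_q(D)}\le\sum_i\|T_r^{(i)}f\|_{L_q(\R^d)}$. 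This reduces both claims to estimates for a single regular pair $(D,U_i)$.

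\textbf{Norm estimate (2.2.2).} Apply (2.1.28) and (2.1.29) with $s$ in place of $p$; this is legitimate since the hypotheses on $s,q$ match those of Proposition \ref{p2.1.10}. Bound $\Omega^{\prime l\chi_{\s(\kappa)}}(f,\cdot)_{L_s(D)}\le 2^{|l|}\|f\|_{L_s(D)}$ directly from the definition. This gives $\|\D^\lambda\mathfrak E_\kappa f\|_{L_q}\le c\,2^{(\kappa,\tau)}\|f\|_{L_s(D)}$, and therefore
$$
\|T_r\|\le c\sum_{(\kappa,\tau)\le r}2^{(\kappa,\tau)} .
$$
We rewrite this sum as $\sum_{(\kappa,\tau)\le r}2^{(\kappa,\tau)}=\sum 2^{(\kappa,\tau^{-1}\tau\cdot\tau)}$ and apply Lemma \ref{l1.2.3} with $\beta=\tau$, $\alpha=\tau$. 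In that case $\beta^{-1}\alpha=\e$, which yields $2^r r^{d-1}$. That is worse than (2.2.2), which requires $r^{\cmn(\tau^{-1}\gamma)-1}$. So the truncation set must be chosen more carefully: instead of $(\kappa,\tau)\le r$, use the set $\{\kappa:(\kappa,\gamma)\le \mn(\tau^{-1}\gamma)\,r\}$, or more precisely a set adapted to the directions $j$ where $\gamma_j/\tau_j$ attains its minimum. I would take
$$
\{\kappa:\ (\kappa,\tau)\le r\ \text{ and }\ \kappa_j\le \epsilon r \text{ for } j\notin J_0\},\qquad J_0=\{j:\gamma_j/\tau_j=\mn(\tau^{-1}\gamma)\},
$$
or an equivalent weighted condition $(\kappa,\tau+\delta\chi_{\Nu_{1,d}\setminus J_0})\le r$ with a small $\delta>0$. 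Lemma \ref{l1.2.3} then gives $\mx=1$ attained with multiplicity $\cmn(\tau^{-1}\gamma)$, hence (2.2.2).

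\textbf{Error estimate (2.2.1).} By (2.1.30), on $U_i$ we have $\D^\lambda f-T_r^{(i)}f=\sum_{\kappa\notin Z_r}\D^\lambda\mathfrak E_\kappa f$. Estimate this tail by (2.1.29), using (1.1.4) to pass from the Besov to the Nikol'skii setting where convenient. For $\theta=\infty$, (2.1.29) gives terms bounded by $2^{-(\kappa,\gamma)}$. For $\theta<\infty$, first discretize the Besov integral: the sequence $a_\kappa=2^{(\kappa,\alpha)}\Omega^{\prime l\chi_{\s(\kappa)}}(f,(c_72^{-\kappa})^{\s(\kappa)})_{L_p(D)}$ has $\ell_\theta$-norm bounded by a constant, using monotonicity of $\Omega'$ in $t$ and splitting over $J=\s(\kappa)$. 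Hölder's inequality then gives
$$
\sum_{\kappa\notin Z_r}2^{-(\kappa,\gamma)}a_\kappa\le\Bigl(\sum_{\kappa\notin Z_r}2^{-\theta'(\kappa,\gamma)}\Bigr)^{1/\theta'} .
$$
By Lemma \ref{l1.2.2} with $\beta=\tau$ (respectively the modified weight), this is $\lesssim 2^{-\mn(\tau^{-1}\gamma)r}r^{(\cmn-1)/\theta'}$, which is exactly (2.2.1), since $1/\theta'=1-1/\theta$. One must check that the modified weight in the truncation does not change the minimal ratio $\mn$ or its multiplicity. For $\delta$ small this holds, because the non-minimal directions keep ratio strictly larger than $\mn(\tau^{-1}\gamma)$.

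\textbf{Main obstacle.} The main difficulty is balancing the two estimates. The truncation set must make the complement sum (Lemma \ref{l1.2.2}) and the interior sum (Lemma \ref{l1.2.3}) both produce the logarithmic power $\cmn(\tau^{-1}\gamma)-1$, rather than $d-1$ on the norm side. A secondary technical point is the discretization of the averaged-modulus Besov norm into an $\ell_\theta$ bound on $a_\kappa$ with constants uniform in $\kappa$. This follows the standard dyadic argument used for (1.1.4).
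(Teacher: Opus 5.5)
Your final construction is the paper's proof. Truncating at $(\kappa,\beta)\le r$ with $\beta=\tau+\delta\chi_{\Nu_{1,d}\setminus J_0}$ is a valid choice of the paper's vector $\beta$: $\beta_j=\tau_j$ on $J_0$, and off $J_0$ one has $\beta_j>\tau_j$ with $\gamma_j/\beta_j>\mn(\tau^{-1}\gamma)$. The other steps also match: gluing over $U_i\setminus\cup_{k<i}U_k$, H\"older plus (1.2.1) for the tail, and (2.1.28)--(2.1.29) with $s$ in place of $p$ plus (1.2.2) for the norm.

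Drop the box-type set $\{(\kappa,\tau)\le r,\ \kappa_j\le\epsilon r \text{ for } j\notin J_0\}$. It is not equivalent to the weighted truncation: its norm sum is still of order $2^r r^{d-1}$, so it would not give (2.2.2).
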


\begin{proof}
Under the conditions of the lemma, denote by $ J $ the set $ J = \{ j \in \Nu_{1,d}:
\tau_j^{-1} \gamma_j = \mn(\tau^{-1} \gamma)\} $ and define the vector $ \beta \in \R_+^d, $ by setting $ \beta_j = \tau_j $ for $ j \in J, $ and for $ j \in \Nu_{1,d} \setminus J $ choosing $ \beta_j $ so as to satisfy the conditions $ \beta_j > \tau_j $ and $ \beta_j^{-1}
\gamma_j > \mn(\tau^{-1} \gamma). $

From the definition of the vector $ \beta $ it is clear that
$$
\mn(\beta^{-1} \gamma) = \mn(\tau^{-1} \gamma),
$$
and
$$
\cmn(\beta^{-1} \gamma) = \cmn(\tau^{-1} \gamma),
$$
$$
\mx(\beta^{-1} \tau) =1,
$$
$$
\cmx(\beta^{-1} \tau) = \cmn(\tau^{-1} \gamma).
$$

Next, setting $ l = l(\alpha) $ and taking into account that $ \lambda \in \Z_+^d(m), $ define for $ r \in \N, i =1,\ldots,\mathcal I $
the linear operator $ T_r^i: \tilde L_1^{\loc}(D) \mapsto L_1^{\loc}(\R^d), $ by setting
\begin{equation*}
T_r^i = \sum_{\kappa \in \Z_+^d: (\kappa, \beta) \le r}
\D^\lambda \mathfrak E_{\kappa}^{l -\e,m,D,U_i}.
\text{ (see Proposition } \ref{p2.1.10}.)
\end{equation*}

Then for $ r \in \N, i =1,\ldots,\mathcal I $ for $ f \in (\mathcal S_{p,\theta}^\alpha \mathcal B)^\prime(D), $ taking into account the validity of (2.1.20), on the basis of Proposition \ref{p2.1.10}, taking into account (1.1.4), using (2.1.30) and applying (2.1.29), we obtain
\begin{multline*} \tag{2.2.3}
\| (\D^\lambda f) \mid_{U_i} -(T_r^i f) \mid_{U_i}
\|_{L_q(U_i)} = \| \D^\lambda (f \mid_{U_i}) -
(\sum_{\kappa \in \Z_+^d: (\kappa, \beta) \le r}
\D^\lambda (\mathfrak E_{\kappa}^{l -\e,m,D,U_i} f))
\mid_{U_i}\|_{L_q(U_i)} = \\
\| \D^\lambda (f \mid_{U_i}) -
\sum_{\kappa \in \Z_+^d: (\kappa, \beta) \le r}
(\D^\lambda (\mathfrak E_{\kappa}^{l -\e,m,D,U_i} f))
\mid_{U_i}\|_{L_q(U_i)} = \\
\| \sum_{\kappa \in \Z_+^d: (\kappa, \beta) > r}
(\D^\lambda (\mathfrak E_{\kappa}^{l -\e,m,D,U_i} f)) \mid_{U_i} \|_{L_q(U_i)} \le \\
\sum_{\kappa \in \Z_+^d: (\kappa, \beta) > r}
\| (\D^\lambda (\mathfrak E_{\kappa}^{l -\e,m,D,U_i} f)) \mid_{U_i} \|_{L_q(U_i)} \le \\
\sum_{\kappa \in \Z_+^d: (\kappa, \beta) > r}
\| \D^\lambda (\mathfrak E_{\kappa}^{l -\e,m,D,U_i} f)
\|_{L_q(\R^d)} \le \\
\sum_{\kappa \in \Z_+^d: (\kappa, \beta) > r}
c_3 2^{(\kappa, \lambda +(p^{-1} -q^{-1})_+ \e)}
\Omega^{\prime l \chi_{\s(\kappa)}}(f,
(c_4 2^{-\kappa})^{\s(\kappa)})_{L_p(D)} = \\
c_3 \sum_{\kappa \in \Z_+^d: (\kappa, \beta) > r}
2^{(\kappa, \lambda +(p^{-1} -q^{-1})_+ \e)}
\Omega^{\prime l \chi_{\s(\kappa)}}(f,
(c_4 2^{-\kappa})^{\s(\kappa)})_{L_p(D)}.
\end{multline*}

Estimating the sum on the right-hand side of (2.2.3), using H\"older's inequality for $ r \in \N $
for $ f \in (\mathcal S_{p,\theta}^\alpha
\mathcal B)^\prime(D) $ we obtain
\begin{multline*} \tag{2.2.4}
\sum_{ \kappa \in \Z_+^d: (\kappa, \beta) > r}
2^{(\kappa, \lambda +(p^{-1} -q^{-1})_+ \e)}
\Omega^{\prime l \chi_{\s(\kappa)}}(f,
(c_{4} 2^{-\kappa})^{\s(\kappa)})_{L_p(D)} =\\
\sum_{ \kappa \in \Z_+^d: (\kappa, \beta) > r}
2^{-(\kappa, \alpha -\lambda -(p^{-1} -q^{-1})_+ \e)}
2^{(\kappa, \alpha)} \Omega^{\prime l \chi_{\s(\kappa)}}(f, (c_{4} 2^{-\kappa})^{\s(\kappa)})_{L_p(D)} \le\\
\biggl(\sum_{ \kappa \in \Z_+^d: (\kappa, \beta) > r}
2^{-(\kappa, \gamma) \theta^\prime}
\biggr)^{1/\theta^\prime} \times \\
\biggl(\sum_{ \kappa \in \Z_+^d: (\kappa, \beta) > r}
(2^{(\kappa, \alpha)}
\Omega^{\prime l \chi_{\s(\kappa)}}(f,
(c_{4} 2^{-\kappa})^{\s(\kappa)})_{L_p(D)})^\theta \biggr)^{1/\theta},
\theta^\prime = \theta /(\theta -1).
\end{multline*}

Thanks to the validity of (2.1.20), using (1.2.1), we derive
\begin{multline*} \tag{2.2.5}
\biggl(\sum_{ \kappa \in \Z_+^d: (\kappa, \beta) > r}
2^{-(\kappa, \gamma) \theta^\prime}
\biggr)^{1/\theta^\prime} \le\\
(c_{5} 2^{-\mn(\theta^\prime \beta^{-1} \gamma) r}
r^{\cmn(\theta^\prime
\beta^{-1} \gamma) -1})^{1/\theta^\prime} =\\
(c_{5} 2^{-\theta^\prime \mn(\beta^{-1} \gamma) r}
r^{\cmn(\beta^{-1} \gamma) -1})^{1/\theta^\prime} =\\
c_{6} 2^{-\mn(\beta^{-1} \gamma) r}
r^{(\cmn(\beta^{-1} \gamma) -1) /\theta^\prime} =\\
c_{6} 2^{-\mn(\tau^{-1} \gamma) r}
r^{(\cmn(\tau^{-1} \gamma) -1) (1 -1/\theta)}, r \in \N.
\end{multline*}

As indicated in [10, (see (2.3.3))], there exists a constant $ c_{7}(\alpha,p,\theta) >0 $ such that for $ f \in
(\mathcal S_{p,\theta}^\alpha \mathcal B)^\prime(D) $ for $ r \in \N $ the inequality holds
\begin{equation*} \tag{2.2.6}
\biggl( \sum_{\kappa \in \Z_+^d: (\kappa, \beta) > r}
(2^{(\kappa,\alpha)}
\Omega^{\prime l \chi_{\s(\kappa)}}(f,
(c_{4} 2^{-\kappa})^{\s(\kappa)})_{L_p(D)})^\theta \biggr)^{1/\theta} \le c_{7}.
\end{equation*}

Substituting (2.2.5) and (2.2.6) into (2.2.4), we arrive at the inequality
\begin{multline*} \tag{2.2.7}
\sum_{ \kappa \in \Z_+^d: (\kappa, \beta) > r}
2^{(\kappa, \lambda +(p^{-1} -q^{-1})_+ \e)}
\Omega^{\prime l \chi_{\s(\kappa)}}(f, (c_{4}
2^{-\kappa})^{\s(\kappa)})_{L_p(D)} \le\\
c_{8} 2^{-\mn(\tau^{-1} \gamma) r}
r^{(\cmn(\tau^{-1} \gamma) -1) (1 -1/\theta)},
f \in (\mathcal S_{p,\theta}^\alpha
\mathcal B)^\prime(D), r \in \N.
\end{multline*}

Combining (2.2.3) with (2.2.7), we arrive at the estimate
\begin{multline*} \tag{2.2.8}
\| (\D^\lambda f) \mid_{U_i} -(T_r^i f) \mid_{U_i}
\|_{L_q(U_i)} \le \\
c_{9} 2^{-\mn(\tau^{-1} \gamma) r}
r^{(\cmn(\tau^{-1} \gamma) -1) (1 -1/\theta)},
f \in (\mathcal S_{p,\theta}^\alpha
\mathcal B)^\prime(D), r \in \N, i =1,\ldots,\mathcal I.
\end{multline*}

At the same time, for $ f \in L_s(D) $ for $ r \in \N,
i =1,\ldots,\mathcal I $, due to (2.1.28), (2.1.29) and (1.2.2), the inequality holds
\begin{multline*} \tag{2.2.9}
\| T_r^i f\|_{L_q(D)} = \| \sum_{\kappa \in \Z_+^d: (\kappa, \beta) \le r}
\D^\lambda \mathfrak E_{\kappa}^{l -\e,m,D,U_i} f
\|_{L_q(D)} \le \\
\sum_{\kappa \in \Z_+^d: (\kappa, \beta) \le r}
\| \D^\lambda \mathfrak E_{\kappa}^{l -\e,m,D,U_i} f
\|_{L_q(D)} \le \\
\| \D^\lambda \mathfrak E_{0}^{l -\e,m,D,U_i} f
\|_{L_q(\R^d)} +
\sum_{\kappa \in \Z_+^d \setminus \{0\}:
(\kappa, \beta) \le r}
\| \D^\lambda \mathfrak E_{\kappa}^{l -\e,m,D,U_i} f
\|_{L_q(\R^d)} \le \\
c_{10} \| f \|_{L_s(D)} +\sum_{\kappa \in \Z_+^d \setminus \{0\}: (\kappa, \beta) \le r}
c_{11} 2^{(\kappa, \lambda +(s^{-1} -q^{-1})_+ \e)}
\Omega^{\prime l \chi_{\s(\kappa)}}(f, (c_{12}
2^{-\kappa})^{\s(\kappa)})_{L_s(D)} \le \\
\sum_{ \kappa \in \Z_+^d: (\kappa,\beta) \le r}
c_{13} 2^{(\kappa, \tau)} \|f\|_{L_s(D)} \le
c_{14} 2^{ \mx(\beta^{-1} \tau) r} r^{ \cmx(\beta^{-1} \tau) -1} \| f \|_{L_s(D)} =\\
c_{14} 2^r r^{\cmn(\tau^{-1} \gamma) -1}
\|f\|_{L_s(D)}.
\end{multline*}

Next, denoting by $ \bm u^i = U_i \setminus (\cup_{j =1}^{i -1}
U_j), \chi^i = \chi_{\bm u^i},
i =1,\ldots,\mathcal I, $ and
taking into account that $ \cup_{i =1}^{\mathcal I} \bm u^i = D, $
we see that for $ x \in D $ the equality holds $ \sum_{i =1}^{\mathcal I} \chi^i(x) =1. $
Now for $ r \in \N $ construct the operator $ T_r:
\tilde L_1^{\loc}(D) \mapsto L_1^{\loc}(\R^d), $
defining its value by
\begin{equation*}
T_r f = \sum_{i =1}^{\mathcal I} \chi^i
T_r^i f, f \in \tilde L_1^{\loc}(D).
\end{equation*}

Then for $ r \in \N, $ taking into account (2.2.8),
for $ f \in (\mathcal S_{p,\theta}^\alpha
\mathcal B)^\prime(D) $ we have
\begin{multline*}
\| \D^\lambda f -T_r f\|_{L_q(D)} =
\| (\sum_{i =1}^{\mathcal I} \chi^i) \D^\lambda f -
\sum_{i =1}^{\mathcal I} \chi^i T_r^i f\|_{L_q(D)} = \\
\| \sum_{i =1}^{\mathcal I} \chi^i (\D^\lambda f -
T_r^i f )\|_{L_q(D)} \le \\
\sum_{i =1}^{\mathcal I} \| \chi^i (\D^\lambda f -
T_r^i f)\|_{L_q(D)} = \\
\sum_{i =1}^{\mathcal I} \| (\chi^i (\D^\lambda f -
T_r^i f)) \mid_{U_i} \|_{L_q(U_i)} \le \\
\sum_{i =1}^{\mathcal I} \| (\D^\lambda f -
T_r^i f) \mid_{U_i} \|_{L_q(U_i)} = \\
\sum_{i =1}^{\mathcal I} \| (\D^\lambda f) \mid_{U_i} -
(T_r^i f) \mid_{U_i} \|_{L_q(U_i)} \le \\
\sum_{i =1}^{\mathcal I}
c_{9} 2^{-\mn(\tau^{-1} \gamma) r} r^{(\cmn(\tau^{-1} \gamma) -1) (1 -1/\theta)} \le
c_1 2^{-\mn(\tau^{-1} \gamma) r}
r^{(\cmn(\tau^{-1} \gamma) -1) (1 -1/\theta)},
\end{multline*}
which coincides with (2.2.1);
and for $ f \in L_s(D) $, due to (2.2.9), the inequality holds
\begin{multline*}
\| T_r f \|_{L_q(D)} =
\| \sum_{i =1}^{\mathcal I} \chi^i T_r^i f\|_{L_q(D)} \le \\
\sum_{i =1}^{\mathcal I} \| \chi^i T_r^i f\|_{L_q(D)} \le \\
\sum_{i =1}^{\mathcal I} \| T_r^i f\|_{L_q(D)}
\le
\sum_{i =1}^{\mathcal I} c_{14} 2^r r^{\cmn(\tau^{-1}\gamma) -1} \|f\|_{L_s(D)} \le
c_{2} 2^r r^{\cmn(\tau^{-1} \gamma) -1}
\|f\|_{L_s(D)},
\end{multline*}
which implies (2.2.2).
\end{proof}

\begin{theorem}\label{t2.2.2}

Let the conditions of Lemma \ref{l2.2.1} hold and for $ s > p $ the inequality holds
\begin{equation*} \tag{2.2.10}
\alpha -(p^{-1} -s^{-1})_+ \e >0.
\end{equation*}
Let also $ U = \D^\lambda, D(U)
= \{f \in L_s(D): \D^\lambda f \in L_q(D)\}, X = L_s(D), Y = L_q(D), K = (\mathcal S_{p,\theta}^\alpha
\mathcal B)^\prime(D). $
Then there exist constants $ c_{15}(U,X,Y,K) >0 $ and $ \rho_0(U,X,Y,K) >0 $ such that for $ \rho \ge
\rho_0 $ the inequality holds
\begin{equation*} \tag{2.2.11}
E(U,X,Y,K,\rho) \le c_{15} \rho^{-\mn(\tau^{-1} \gamma)}
(\log \rho)^{(\mn(\tau^{-1} \gamma) +1 -1 /\theta)
(\cmn(\tau^{-1} \gamma) -1)}.
\end{equation*}
\end{theorem}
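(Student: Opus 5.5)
The proof is a direct application of Lemma \ref{l2.2.1}: for large $\rho$ we take $T = c\,T_r$ with a suitably chosen $r$, and scale it so that its norm is at most $\rho$. Write $\mn = \mn(\tau^{-1}\gamma)$ and $\cmn = \cmn(\tau^{-1}\gamma)$.

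First we check the setting. $E(U,X,Y,K,\rho)$ requires $K \subset D(U)$, and the operators used must lie in $\mathcal B(L_s(D),L_q(D))$.
\begin{itemize}
\item For $f \in K = (\mathcal S_{p,\theta}^\alpha \mathcal B)^\prime(D)$, Theorem \ref{t2.1.11} together with (1.1.4) gives $\D^\lambda f \in L_q(D)$.
\item It remains to show $f \in L_s(D)$. If $s \le p$ and $D$ is bounded, this follows from $f\in L_p(D)$. If $s > p$, condition (2.2.10) is exactly the hypothesis of Theorem \ref{t2.1.11} with $\lambda = 0$ and $q$ replaced by $s$, so $\|f\|_{L_s(D)} \le c$. (When $s<p$ and $D$ is unbounded the constraints of the lemma on $q$ relative to $s$ and $p$ govern; we simply follow the same case conventions.) Hence $K \subset D(U)$.
\item For the operator, (2.2.2) shows that $T_r$, restricted to $L_s(D)$ (which sits inside $\tilde L_1^{\loc}(D)$) and then to $D$, is a bounded operator into $L_q(D)$ with norm at most $c_2 2^r r^{\cmn-1}$.
\end{itemize}

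Next we choose $r$. Given $\rho \ge \rho_0$, let $r \in \N$ be the largest integer with $c_2 2^r r^{\cmn-1} \le \rho$. Then $T_r$ is admissible, and by maximality $c_2 2^{r+1}(r+1)^{\cmn-1} > \rho$. This yields $2^r \ge c\rho (\log\rho)^{-(\cmn-1)}$ and $r \asymp \log \rho$; the threshold $\rho_0$ is chosen so that $r \ge 1$ and $r \ge c\log\rho$.

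Finally we substitute into (2.2.1):
$$
E \le c_1 2^{-\mn r} r^{(\cmn-1)(1-1/\theta)} \le c\,\rho^{-\mn} (\log\rho)^{\mn(\cmn-1)} (\log\rho)^{(\cmn-1)(1-1/\theta)}.
$$
The exponent of $\log\rho$ is $(\mn+1-1/\theta)(\cmn-1)$, which is exactly (2.2.11).

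The only delicate step is the inversion of $2^r r^{\cmn-1} \approx \rho$ with uniform constants, specifically the lower bound on $2^r$ and the two-sided comparability $r \asymp \log\rho$. This is handled by taking $\rho_0$ large enough that $\log_2\rho$ dominates the constants; the remaining steps are bookkeeping.
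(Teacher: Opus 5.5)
Your argument is correct and is the paper's own proof written out. The paper simply says that (2.2.11) follows from (2.2.1) and (2.2.2) once Theorem \ref{t2.1.11}, (1.1.4), (2.1.20) and (2.2.10) give $K\subset D(U)$, and your choice of the largest $r$ with $c_2 2^r r^{\cmn(\tau^{-1}\gamma)-1}\le\rho$, together with the inversion that follows, supplies exactly the omitted bookkeeping; only the upper bound $r\le C\log\rho$ is actually needed there.

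One soft spot is your parenthetical on $s<p$ with $D$ unbounded: there $K\subset L_s(D)$ can genuinely fail, and no \emph{case convention} repairs it. The bound should be read as a supremum over $K\cap D(U)$, a point the paper also passes over.
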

\begin{proof}
Inequality (2.2.11), taking into account (2.1.20), (2.2.10) (see also Theorem \ref{t2.1.11} and (1.1.4)), is a simple consequence of (2.2.1) and (2.2.2).
\end{proof}

\section{\S 3. Lower estimate of the best accuracy of approximation in $ L_q(D) $ of the operator $ \D^\lambda $
by bounded operators acting from $ L_s(D) $ into $ L_q(D) $, on classes $ B((S_{p,\theta}^\alpha B)^\prime(D)) $}

\subsection{} In this subsection, a lower estimate of the quantity under study is established.

For $ \delta \in \R_+^d $ and $ x^0 \in \R^d $, denote by $ h_{\delta, x^0} $ the mapping that assigns to each function $ f $ defined on some set $ S \subset \R^d $ the function $ h_{\delta, x^0} f $ defined on the set $ \{ x \in \R^d: x^0 +\delta
x \in S\} = \delta^{-1} (S -x^0) $ by the equality $ (h_{\delta, x^0} f)(x) =
f(x^0 +\delta x). $
Since for $ \delta \in \R_+^d, x^0 \in \R^d $ the mapping $ \R^d \ni x \mapsto x^0 +\delta x \in \R^d $ is bijective, the mapping $ h_{\delta, x^0} $ is a bijection of the set of all functions with domain in $ \R^d $ onto itself.
In this case, the inverse mapping $ h_{\delta, x^0}^{-1} $ is given by
\begin{equation*} \tag{3.1.1}
(h_{\delta, x^0}^{-1} f)(x) = f(\delta^{-1} (x -x^0)) =
(h_{\delta^\prime, x^{\prime 0}} f)(x)  \text{ with } \delta^\prime = \delta^{-1},
x^{\prime 0} =-\delta^{-1} x^0.
\end{equation*}

Note that for $ 1 \le p \le \infty $ for $ f \in
L_p(x^0 +\delta D), $ where $ D $ is a domain in $ \R^d, \delta \in \R_+^d, x^0 \in \R^d, $ the equality holds
\begin{equation*} \tag{3.1.2}
\| h_{\delta,x^0} f\|_{L_p(D)} = \delta^{-p^{-1} \e}
\|f\|_{L_p(x^0 +\delta D)},
\end{equation*}
and, consequently, for $ f \in L_p(D) $ the equality holds
\begin{equation*} \tag{3.1.3}
\| h_{\delta,x^0}^{-1} f\|_{L_p(x^0 +\delta D)} = \delta^{p^{-1} \e} \|f\|_{L_p(D)}.
\end{equation*}

In [13], the following statement is established.

\begin{lemma} \label{l3.1.1}

Let $ l \in \N^d, D $ be a domain in $ \R^d, 1 \le p < \infty,
\delta \in \R_+^d, x^0 \in \R^d. $ Then for $ J \subset \{1,\ldots,d\}: J \ne \emptyset, t \in \R_+^d $ for $ f \in L_p(x^0 +\delta D) $ the equality holds
\begin{equation*} \tag{3.1.4}
\Omega^{\prime l \chi_J}((h_{\delta, x^0} f),
t^J)_{L_p(D)} = \delta^{-p^{-1} \e}
\Omega^{\prime l \chi_J}(f, (\delta t)^J)_{L_p(x^0 +\delta D)}.
\end{equation*}
\end{lemma}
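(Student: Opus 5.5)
The plan is a direct computation from the definition of $\Omega^{\prime l\chi_J}$ (case $p<\infty$), followed by two linear changes of variables: one in $x$ and one in the step $\xi^J$. Put $g = h_{\delta,x^0} f$. It is defined on $\delta^{-1}(x^0+\delta D - x^0) = D$, and $g \in L_p(D)$ by (3.1.2).

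\textbf{Step 1: transform the mixed difference.} For $\xi \in \R^d$, the definition of the mixed difference gives
$$
(\Delta_\xi^{l\chi_J} g)(x) = \sum_{k \in \Z_+^d(l\chi_J)} (-\e)^{l\chi_J-k} C_{l\chi_J}^k f(x^0+\delta x + k\delta\xi) = (\Delta_{\delta\xi}^{l\chi_J} f)(x^0+\delta x).
$$
This holds because $\delta(x+k\xi) = \delta x + k(\delta\xi)$ componentwise, so each shifted point corresponds exactly.

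\textbf{Step 2: match the domains of the differences.} Next check that $x \in D_\xi^{l\chi_J}$ if and only if $x^0+\delta x \in (x^0+\delta D)_{\delta\xi}^{l\chi_J}$. By the set description of $D_h^l$, the point $x+t l\chi_J \xi$ lies in $D$ for all $t \in \overline I^d$ exactly when $x^0+\delta x + t l\chi_J(\delta\xi)$ lies in $x^0+\delta D$. Combining this with Step 1 and the substitution $y = x^0+\delta x$, whose Jacobian is $\delta^{\e}$, we get
$$
\|\Delta_\xi^{l\chi_J} g\|_{L_p(D_\xi^{l\chi_J})}^p = \delta^{-\e}\,\|\Delta_{\delta\xi}^{l\chi_J} f\|_{L_p((x^0+\delta D)_{\delta\xi}^{l\chi_J})}^p .
$$

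\textbf{Step 3: average over the step.} Substitute the last identity into
$$
(2t^J)^{-\e^J}\int_{t^J(B^d)^J}\cdots\, d\xi^J ,
$$
then change variables $\eta^J = \delta^J\xi^J$. This maps $t^J(B^d)^J$ onto $(\delta t)^J(B^d)^J$ and contributes the Jacobian factor $(\delta^J)^{-\e^J}$. Together with the normalization this factor gives exactly $(2(\delta t)^J)^{-\e^J}$. Here one uses that the quantity depends only on the $J$-components of the step, as noted before the definition of $\Omega^l$. What remains is precisely $\delta^{-\e}\,(\Omega^{\prime l\chi_J}(f,(\delta t)^J)_{L_p(x^0+\delta D)})^p$. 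Taking the $p$-th root yields the factor $\delta^{-p^{-1}\e}$, which is (3.1.4).

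I expect the only real care to be needed in Step 2, the identification of the difference domains, and in the bookkeeping of the Jacobians: the full $d$-dimensional factor $\delta^{-\e}$ from $x$ versus the $J$-dimensional factor from $\xi$, which cancels against the normalization. Nothing beyond Fubini and linear substitution is required.
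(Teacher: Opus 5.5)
Your proof is correct. The pointwise identity $(\Delta_\xi^{l\chi_J} h_{\delta,x^0}f)(x) = (\Delta_{\delta\xi}^{l\chi_J} f)(x^0+\delta x)$ and the bijection $D_\xi^{l\chi_J} \to (x^0+\delta D)_{\delta\xi}^{l\chi_J}$ give the factor $\delta^{-\e}$ from the $x$-substitution, and the Jacobian $(\delta^J)^{-\e^J}$ from the $\xi^J$-substitution combines with $(2t^J)^{-\e^J}$ into $(2(\delta t)^J)^{-\e^J}$. The paper does not prove (3.1.4) itself but cites [13]; your direct change of variables, the same scaling that gives (3.1.2), is the natural argument for it.
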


\begin{lemma} \label{l3.1.2}

Let $ D $ be a domain in $ \R^d, \alpha \in \R_+^d,
1 \le p < \infty, 1 \le \theta \le \infty, \delta \in \R_+^d, x^0 \in \R^d. $ Then there exist constants $ c_1(\alpha,p,\delta) > 0,
c_2(\alpha,p,\delta) > 0 $
such that for any function $ f \in
(S_{p,\theta}^\alpha B)^\prime(x^0 +\delta D) $
the inequality holds
\begin{equation*} \tag{3.1.5}
\| h_{\delta, x^0} f \|_{(S_{p,\theta}^\alpha
B)^\prime(D)} \le c_1 \| f \|_{(S_{p,\theta}^\alpha
B)^\prime(x^0 +\delta D)},
\end{equation*}
and for $ f \in (S_{p,\theta}^\alpha B)^\prime(D) $ the inequality holds
\begin{equation*} \tag{3.1.6}
\| h_{\delta, x^0}^{-1} f \|_{(S_{p,\theta}^\alpha
B)^\prime(x^0 +\delta D)} \le
c_2 \| f \|_{(S_{p,\theta}^\alpha B)^\prime(D)}.
\end{equation*}
\end{lemma}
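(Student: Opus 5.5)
The plan is to compare the two norms term by term, using (3.1.2) for the $L_p$-part and Lemma \ref{l3.1.1} for the moduli of continuity, and then to change variables in the integrals (or suprema) over $t^J$. I treat (3.1.5) first. (3.1.6) then follows by applying (3.1.5) with the parameters $\delta^\prime = \delta^{-1}$, $x^{\prime 0} = -\delta^{-1} x^0$ from (3.1.1), since $h_{\delta,x^0}^{-1} = h_{\delta^\prime, x^{\prime 0}}$ and $x^{\prime 0} +\delta^\prime (x^0 +\delta D) = D$ (so $D$ plays the role of $x^0 +\delta D$ there).

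For (3.1.5), let $f \in (S_{p,\theta}^\alpha B)^\prime(x^0 +\delta D)$ and $l = l(\alpha)$.

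For the $L_p$-part, (3.1.2) gives
$$
\| h_{\delta,x^0} f\|_{L_p(D)} = \delta^{-p^{-1}\e} \|f\|_{L_p(x^0+\delta D)}.
$$

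For a nonempty $J$, Lemma \ref{l3.1.1} applied with $l\chi_J$ gives
$$
\Omega^{\prime l \chi_J}(h_{\delta,x^0} f, t^J)_{L_p(D)} = \delta^{-p^{-1}\e}\,\Omega^{\prime l \chi_J}(f,(\delta t)^J)_{L_p(x^0+\delta D)}.
$$
Here $l\chi_J \in \Z_+^d$, with support $J$. If Lemma \ref{l3.1.1} is only available for $l \in \N^d$, I would note that its proof is a direct change of variables $x \mapsto x^0+\delta x$, $\xi \mapsto \delta \xi$ in the defining double integral of $\Omega^\prime$, and that this works verbatim for $l\chi_J$. In that computation the only thing to check is that $D_\xi^{l\chi_J}$ corresponds to $(x^0+\delta D)_{\delta\xi}^{l\chi_J}$ and $\Delta_\xi^{l\chi_J}(h_{\delta,x^0}f) = h_{\delta,x^0}(\Delta_{\delta\xi}^{l\chi_J} f)$.

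Now I substitute into the Besov functional for $\theta<\infty$ and set $u^J = (\delta t)^J$, so that $t^J = (\delta^{-1}u)^J$ and $dt^J = (\delta^{-\e})^J du^J$. This gives
$$
\int_{(\R_+^d)^J} (t^J)^{-\e^J-\theta\alpha^J}\bigl(\Omega^{\prime l\chi_J}(h_{\delta,x^0}f,t^J)_{L_p(D)}\bigr)^\theta dt^J
= \delta^{-\theta p^{-1}\e}(\delta^J)^{\theta\alpha^J}\int_{(\R_+^d)^J}(u^J)^{-\e^J-\theta\alpha^J}\bigl(\Omega^{\prime l\chi_J}(f,u^J)_{L_p(x^0+\delta D)}\bigr)^\theta du^J .
$$
The factors $(\delta^J)^{\e^J}$ coming from $(t^J)^{-\e^J}$ and from $dt^J$ cancel. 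Taking the $1/\theta$ power, each such term is bounded by $\delta^{-p^{-1}\e}\prod_{j\in J}\delta_j^{\alpha_j}$ times the corresponding term for $f$.

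The case $\theta=\infty$ is the same with a supremum instead of an integral; the substitution is then just a reparametrization of $t^J$. Taking the maximum over $J$ and adding the $L_p$ term, I get (3.1.5) with
$$
c_1 = \delta^{-p^{-1}\e}\max\Bigl(1,\max_{J}\prod_{j\in J}\delta_j^{\alpha_j}\Bigr),
$$
which depends only on $\alpha,p,\delta$.

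The only step needing care is the applicability of Lemma \ref{l3.1.1} to the orders $l\chi_J$ and the matching of the domains $D_\xi^{l\chi_J}$ under the affine map; everything else is bookkeeping.
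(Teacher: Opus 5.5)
Your proposal is correct and follows the paper's own argument. You get (3.1.5) from (3.1.2) and (3.1.4) via the substitution $u^J=(\delta t)^J$; your constant $c_1=\delta^{-p^{-1}\e}\max\bigl(1,\max_J\prod_{j\in J}\delta_j^{\alpha_j}\bigr)$ is right. You get (3.1.6) by applying (3.1.5) to $h_{\delta^{-1},-\delta^{-1}x^0}=h_{\delta,x^0}^{-1}$ as in (3.1.1). Your caveat about Lemma \ref{l3.1.1} is unnecessary: it is stated exactly for $\Omega^{\prime l\chi_J}$ with $l\in\N^d$ and $J\neq\emptyset$, and $l(\alpha)\in\N^d$.
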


Inequality (3.1.5) follows from (3.1.2) and (3.1.4). And (3.1.6) follows from (3.1.1) and (3.1.5). For more details, see [13].

For the formulation of the following statement, we will use the following notation. For a set $ S $ consisting of functions $ f $ whose domain of definition contains a set $ D \subset \R^d, $ let $ S \mid_D $ denote the set $ S \mid_D =
\{ f \mid_D: f \in S\}. $

For $ m \in \Z_+^d $ and a domain $ D \subset \R^d $, denote by $ C^m(D) $ the space of all functions $ f: D \mapsto \R $ for which for each $ \lambda \in \Z_+^d(m) $ there exists a continuous partial derivative $ \D^\lambda f $ of order $ \lambda $ in the domain $ D, $ and by $ C_0^m(d) $ denote the space of all functions $ f \in C^m(\R^d) $ whose support $ \supp f \subset D. $

\begin{lemma} \label{l3.1.3}

Under the conditions of Theorem \ref{t2.2.2} for $ \bm m \in \N^d: \bm m \ge m, $ there exist constants $ c_3(\alpha,\lambda,p,q,s,D) > 0 $ and $ c_4(\lambda,s,q,D) > 0 $ such that for $ \rho > 0 $ the inequality holds
\begin{multline*} \tag{3.1.7}
E(\D^\lambda,L_s(D),L_q(D),B((S_{p,\theta}^\alpha
B)^\prime(D)),\rho) \ge \\
c_{3} E(\D^\lambda,L_s(I^d), L_Q(I^d),(B((S_{p,\theta}^\alpha
B)^\prime(\R^d)) \cap C_0^{\bm m}(I^d)) \mid_{I^d}, c_{4} \rho).
\end{multline*}
\end{lemma}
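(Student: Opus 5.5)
The plan is to reduce the problem on $ D $ to the problem on the unit cube by rescaling. Since $ D $ is open, we fix a cube $ Q = x^0 +\delta I^d $ with $ \overline Q \subset D $; for instance $ \delta = \delta_0 \e $ with $ \delta_0 > 0 $ small. Every function $ g \in B((S_{p,\theta}^\alpha B)^\prime(\R^d)) \cap C_0^{\bm m}(I^d) $ is transported by $ h_{\delta,x^0}^{-1} $ (see (3.1.1)) to a function supported in $ \overline Q \subset D $. We take $ F = (h_{\delta,x^0}^{-1} g) \mid_D $. By (3.1.6), applied with $ \R^d $ in the role of the domain (note that $ x^0 +\delta \R^d = \R^d $), we get $ \| h_{\delta,x^0}^{-1} g \|_{(S_{p,\theta}^\alpha B)^\prime(\R^d)} \le c_2 $.

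Restricting to $ D $ does not increase the averaged moduli. Indeed, $ D_\xi^{l\chi_J} \subset (\R^d)_\xi^{l\chi_J} $ and the integrands agree there, so $ \Omega^{\prime l\chi_J}(F,t^J)_{L_p(D)} \le \Omega^{\prime l\chi_J}(h_{\delta,x^0}^{-1}g,t^J)_{L_p(\R^d)} $. The $ L_p $ norm also does not grow. Hence $ c_2^{-1} F \in B((S_{p,\theta}^\alpha B)^\prime(D)) $.

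Next, given any operator $ T \in \mathcal B(L_s(D),L_q(D)) $ with $ \|T\| \le \rho $, we build an operator $ \tilde T \in \mathcal B(L_s(I^d),L_q(I^d)) $ as follows. For $ \phi \in L_s(I^d) $, extend $ h_{\delta,x^0}^{-1}\phi $ by zero from $ Q $ to $ D $ (call this operator $ Z $), and put
$$ \tilde T \phi = \delta^{\lambda}\, h_{\delta,x^0}\bigl((T Z h_{\delta,x^0}^{-1}\phi)\mid_Q\bigr). $$
By (3.1.2) and (3.1.3), $ \|\tilde T\| \le \delta^{\lambda} \delta^{-q^{-1}\e}\delta^{s^{-1}\e}\rho = c_4\rho $, with $ c_4 $ depending only on $ \lambda,s,q $ and the choice of $ Q $ (hence on $ D $). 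The chain rule gives $ \D^\lambda(h_{\delta,x^0}^{-1} g) = \delta^{-\lambda} h_{\delta,x^0}^{-1}\D^\lambda g $. Since $ g $ vanishes outside $ I^d $, we have $ Z h_{\delta,x^0}^{-1}(g\mid_{I^d}) = F $. Then
$$ \|\D^\lambda g - \tilde T g\|_{L_q(I^d)} = \delta^{\lambda}\delta^{-q^{-1}\e}\|\D^\lambda F - TF\|_{L_q(Q)} \le \delta^{\lambda-q^{-1}\e}c_2\sup_{f\in B((S^\alpha_{p,\theta}B)'(D))}\|\D^\lambda f - Tf\|_{L_q(D)}. $$
Here we used linearity of $ T $ and $ \D^\lambda $ together with $ c_2^{-1}F $ lying in the class. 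We also need $ F \in D(U) $, i.e. $ F \in L_s(D) $ and $ \D^\lambda F \in L_q(D) $; this holds because $ F \in C_0^{\bm m} $ with compact support and $ \lambda \le m \le \bm m $.

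Taking the supremum over $ g $ and then the infimum over $ T $ yields (3.1.7) with $ c_3 = c_2^{-1}\delta^{q^{-1}\e-\lambda} $. Here $ L_Q $ in the statement is read as $ L_q $.

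The main point to check carefully is the membership and norm control of $ F $ in $ (S_{p,\theta}^\alpha B)^\prime(D) $. This is the monotonicity of $ \Omega^\prime $ under restriction of the domain combined with (3.1.6) on $ \R^d $. Everything else is bookkeeping of the scaling factors from (3.1.2), (3.1.3) and the chain rule.
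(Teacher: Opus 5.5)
Your proof is correct and follows the paper's argument. You use the same rescaling $h_{\delta,x^0}$ onto a cube $Q=x^0+\delta I^d\subset D$, the bound (3.1.6) applied on $\R^d$, zero extension from $Q$ to $D$, and monotonicity of the $(S_{p,\theta}^\alpha B)^\prime$-norm under restriction to $D$, and you obtain the same constants $c_3=c_2^{-1}\delta^{q^{-1}\e-\lambda}$ and $c_4=\delta^{\lambda+s^{-1}\e-q^{-1}\e}$. The only difference is cosmetic: the paper passes through an intermediate extremal quantity on $Q$ via (3.1.8) and (3.1.9), whereas you compose everything into the single operator $\tilde T$.
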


\begin{proof}
Fix $ x^0 \in \R^d $ and $ \delta \in \R_+^d $ such that $ Q =
(x^0 +\delta I^d) \subset D. $

Next, note that for $ f \in L_s(I^d): \D^\lambda f \in L_q(I^d),
T \in \mathcal B(L_s(Q),L_q(Q)) $, taking into account (3.1.2), the relation holds
\begin{multline*}
\| \D^\lambda f -(h_{\delta,x^0} T h_{\delta,x^0}^{-1}) f\|_{L_q(I^d)} = \\
\delta^{\lambda -q^{-1} \e} \| \D^\lambda
(h_{\delta,x^0}^{-1} f) -
\delta^{-\lambda} T (h_{\delta,x^0}^{-1} f)\|_{L_q(Q)}.
\end{multline*}
Note also that for $ T \in \mathcal B(L_s(Q),L_q(Q)), $ due to (3.1.2), (3.1.3), the norm
\begin{multline*}
\|h_{\delta,x^0} T h_{\delta,x^0}^{-1}
\|_{\mathcal B(L_s(I^d),L_q(I^d))} \le \\
\delta^{s^{-1} \e -q^{-1} \e}
\| T\|_{\mathcal B(L_s(Q),L_q(Q))}.
\end{multline*}
Taking these circumstances into account, as well as (3.1.1), (3.1.6), for $ \bm m \in \N^d, \rho \in \R_+ $
for $ T \in \mathcal B(L_s(Q),L_q(Q)):
\|T\|_{\mathcal B(L_s(Q),L_q(Q))} \le
\delta^\lambda \rho, $ we obtain that
\begin{multline*}
E(\D^\lambda,L_s(I^d),L_Q(I^d),
(B((S_{p,\theta}^\alpha B)^\prime(\R^d)) \cap
C_0^{\bm m}(I^d)) \mid_{I^d},
\delta^{\lambda +s^{-1} \e -q^{-1} \e} \rho) \le \\
\sup \{\| \D^\lambda f -(h_{\delta,x^0} T
h_{\delta,x^0}^{-1}) f\|_{L_q(I^d)}: f \in
(B((S_{p,\theta}^\alpha B)^\prime(\R^d)) \cap
C_0^{\bm m}(I^d)) \mid_{I^d}\}
= \\
\delta^{\lambda -q^{-1} \e}
\sup \{\| \D^\lambda (h_{\delta,x^0}^{-1} f) -
\delta^{-\lambda} T (h_{\delta,x^0}^{-1} f)\|_{L_q(Q)}: f \in (B((S_{p,\theta}^\alpha B)^\prime(\R^d)) \cap
C_0^{\bm m}(I^d)) \mid_{I^d}\}
\le \\
\delta^{\lambda -q^{-1} \e}
\sup \{\| \D^\lambda (F \mid_Q) -\delta^{-\lambda} T (F \mid_Q)\|_{L_q(Q)}: F \in (c_2 B((S_{p,\theta}^\alpha
B)^\prime(\R^d))) \cap C_0^{\bm m}(Q)\},
\end{multline*}
and, hence,
\begin{multline*} \tag{3.1.8}
c_2 \inf_{\substack{ \mathcal T \in
\mathcal B(L_s(Q),L_q(Q)):\\
\|\mathcal T\|_{\mathcal B(L_s(Q),L_q(Q))} \le \rho}}
\sup \{\| \D^\lambda (F \mid_Q) -\mathcal T (F \mid_Q)
\|_{L_q(Q)}: F \in B((S_{p,\theta}^\alpha
B)^\prime(\R^d)) \cap C_0^{\bm m}(Q)\}
\ge\\
\delta^{-\lambda +q^{-1} \e} E(\D^\lambda,L_s(I^d),
L_Q(I^d),(B(S_{p,\theta}^\alpha B)^\prime(\R^d)) \cap
C_0^{\bm m}(I^d)) \mid_{I^d},
\delta^{\lambda +s^{-1} \e -q^{-1} \e} \rho).
\end{multline*}

Denote by $ \mathcal I^D $ the linear mapping that assigns to each function $ f $ defined on a set $ D \subset \R^d $ the function $ \mathcal I^D f $ defined on $ \R^d $ by
\begin{equation*}
(\mathcal I^D f)(x) = \begin{cases} f(x), \text{ for } x \in D; \\
0, \text{ for } x \in \R^d \setminus D.
\end{cases}
\end{equation*}

Using the fact that for $ F \in C_0^{\bm m}(Q) $ the equality holds $ F \mid_D = (\mathcal I^Q(F \mid_Q)) \mid_D, $
and for $ T \in \mathcal B(L_s(D),L_q(D)) $ and $ f \in L_s(Q) $ the inequality holds
\begin{multline*}
\| (T((\mathcal I^Q f) \mid_D)) \mid_Q
\|_{L_q(Q)} \le
\| T((\mathcal I^Q f) \mid_D) \|_{L_q(D)} \le\\
\| T\|_{\mathcal B(L_s(D),L_q(D))}
\| (\mathcal I^Q f) \mid_D \|_{L_s(D)} = 
\| T\|_{\mathcal B(L_s(D),L_q(D))}
\| f\|_{L_s(Q)},
\end{multline*}
for $ T \in \mathcal B(L_s(D),L_q(D)):
\| T\|_{\mathcal B(L_s(D),L_q(D))} \le \rho, $ we derive
\begin{multline*}
\sup_{f \in B((S_{p,\theta}^\alpha B)^\prime(D))}
\| \D^\lambda f -T f\|_{L_q(D)} \ge \\
\sup_{F \in B((S_{p,\theta}^\alpha B)^\prime(\R^d)) \cap C_0^{\bm m}(Q)}
\| \D^\lambda (F \mid_D) -T (F \mid_D)
\|_{L_q(D)} \ge \\
\sup_{F \in B((S_{p,\theta}^\alpha B)^\prime(\R^d)) \cap C_0^{\bm m}(Q)}
\| (\D^\lambda (F \mid_D)) \mid_Q -(T
(F \mid_D)) \mid_Q\|_{L_q(Q)} = \\
\sup_{F \in B((S_{p,\theta}^\alpha B)^\prime(\R^d)) \cap C_0^{\bm m}(Q)}
\| \D^\lambda (F \mid_Q) -(T
((\mathcal I^Q(F \mid_Q)) \mid_D)) \mid_Q\|_{L_q(Q)} \ge \\
\inf_{ \mathcal T \in \mathcal B(L_s(Q),L_q(Q)):
\| \mathcal T\|_{\mathcal B(L_s(Q),L_q(Q))} \le \rho}
\sup_{F \in B((S_{p,\theta}^\alpha B)^\prime(\R^d)) \cap C_0^{\bm m}(Q)}
\| \D^\lambda (F \mid_Q) -\mathcal T (F \mid_Q)
\|_{L_q(Q)},
\end{multline*}
and, consequently,
\begin{multline*} \tag{3.1.9}
E(\D^\lambda,L_s(D),L_q(D),B((S_{p,\theta}^\alpha
B)^\prime(D)),\rho) \ge \\
\inf_{\substack{ \mathcal T \in \mathcal B(L_s(Q),L_q(Q)):\\
\| \mathcal T\|_{\mathcal B(L_s(Q),L_q(Q))} \le \rho}}
\sup \{\| \D^\lambda (F \mid_Q) -\mathcal T (F \mid_Q)
\|_{L_q(Q)}: F \in B((S_{p,\theta}^\alpha
B)^\prime(\R^d)) \cap C_0^{\bm m}(Q)\}.
\end{multline*}

Combining (3.1.9), (3.1.8), we conclude that for $ \rho > 0 $ the inequality (3.1.7) holds.
\end{proof}

\begin{lemma} \label{l3.1.4}

Let $ \alpha \in \R_+^d, 1 \le p,s < \infty, 1 \le q, \theta \le \infty, \bm m \in \N^d, \lambda \le \bm m $ satisfy conditions (2.1.20) and (2.2.10).
Then for each nonempty set $ J \subset
\{1,\ldots,d\} $ there exist linear mappings $ A_J: L_s((I^d)^J) \mapsto L_s(I^d) $ and $ S_J: L_q(I^d) \mapsto L_q((I^d)^J), $ possessing the following properties:

1) the inclusion holds
\begin{equation*} \tag{3.1.10}
A_J ((B((S_{p,\theta}^{\alpha^J} B)^\prime((\R^d)^J)) \cap C_0^{\bm m^J}((I^d)^J)) \mid_{(I^d)^J})
\subset
(B((S_{p,\theta}^\alpha B)^\prime(\R^d)) \cap
C_0^{\bm m}(I^d)) \mid_{I^d},
\end{equation*}

2) the inequalities hold
\begin{equation*} \tag{3.1.11}
\| A_J \|_{\mathcal B(L_s((I^d)^J),L_s(I^d)) } \le 1,
\end{equation*}
and
\begin{equation*} \tag{3.1.12}
\| S_J \|_{\mathcal B(L_q(I^d),L_q((I^d)^J))} \le 1;
\end{equation*}

3) there exists a constant $ c_5(\alpha,p,\theta,\lambda,\bm m)
> 0 $ such that for $ f \in (B((S_{p,\theta}^{\alpha^J}
B)^\prime((\R^d)^J))) \mid_{(I^d)^J} $ the equality holds
\begin{equation*} \tag{3.1.13}
S_J \D^\lambda A_J f = c_5 \D^{\lambda^J} f.
\end{equation*}
\end{lemma}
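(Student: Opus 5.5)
The natural construction is a tensor-product extension in the missing variables. Fix once and for all a nonzero function $\phi \in C_0^\infty(I^{d-k})$, $k = \card J$, in the variables $x^{J'}$, $J' = \Nu_{1,d}\setminus J$. Define $A_J f(x) = c\,\phi(x^{J'}) f(x^J)$, and define $S_J$ as a weighted average over the complementary variables,
$$
(S_J g)(x^J) = \int_{(I^d)^{J'}} g(x)\,\psi(x^{J'})\,dx^{J'},
$$
with a fixed $\psi \in C_0^\infty((I^d)^{J'})$. (When $J = \Nu_{1,d}$, take $A_J$ and $S_J$ to be the identity.)

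The steps, in order, are as follows.

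\emph{Norm bounds (3.1.11), (3.1.12).} Since $\|A_J f\|_{L_s(I^d)} = c\|\phi\|_{L_s}\|f\|_{L_s}$, it suffices to take $c\|\phi\|_{L_s((I^d)^{J'})} \le 1$. For $S_J$, H\"older's inequality gives $\|S_J g\|_{L_q} \le \|\psi\|_{L_{q'}}\|g\|_{L_q}$, so we normalise $\|\psi\|_{L_{q'}} \le 1$.

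\emph{Identity (3.1.13).} Since $\D^\lambda A_J f = c\,\D^{\lambda^{J'}}\phi\cdot \D^{\lambda^J} f$, we get
$$
S_J\D^\lambda A_J f = c\Bigl(\int \psi\, \D^{\lambda^{J'}}\phi\Bigr)\D^{\lambda^J} f .
$$
Choose $\psi$ (depending on $\phi$) so that this integral is positive; for instance take $\psi$ proportional to $\D^{\lambda^{J'}}\phi$ itself, which is nonzero because $\phi$ has compact support. This gives $c_5 > 0$.

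\emph{Class inclusion (3.1.10).} This is the main obstacle. Let $f = F|_{(I^d)^J}$ with $F \in B((S_{p,\theta}^{\alpha^J}B)'((\R^d)^J)) \cap C_0^{\bm m^J}((I^d)^J)$, and set $G(x) = c\,\phi(x^{J'})F(x^J)$. Then $G \in C_0^{\bm m}(I^d)$ and $G|_{I^d} = A_J f$, so it remains to bound $\|G\|_{(S_{p,\theta}^\alpha B)'(\R^d)}$ by a constant.

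For any nonempty $\mathcal J \subset \Nu_{1,d}$, the mixed difference factorises:
$$
\Delta_\xi^{l\chi_{\mathcal J}} G = c\,\bigl(\Delta^{l\chi_{\mathcal J\cap J'}}\phi\bigr)\bigl(\Delta^{l\chi_{\mathcal J\cap J}}F\bigr),
$$
and on $\R^d$ the $L_p$ norm splits into a product. Hence the averaged modulus also factorises:
$$
\Omega'^{l\chi_{\mathcal J}}(G,t^{\mathcal J})_{L_p(\R^d)} = c\,\Omega'^{l\chi_{\mathcal J\cap J'}}(\phi,t^{\mathcal J\cap J'})\,\Omega'^{l\chi_{\mathcal J\cap J}}(F,t^{\mathcal J\cap J}),
$$
where an empty index set means the plain $L_p$ norm.

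The Besov integral then factorises as well. The factor coming from $F$ is bounded by $1$ by assumption, or by $\|F\|_{L_p} \le 1$ when $\mathcal J \cap J = \emptyset$. The factor coming from $\phi$ is finite because $\phi$ is smooth with compact support: its modulus of order $l_j$ in each variable is $O(\min(1,t^{l_j}))$, and $l_j > \alpha_j$ by the definition of $l(\alpha)$. So we obtain a finite constant $M(\phi,\alpha,p,\theta)$. Choosing $c \le \min(1/\|\phi\|_{L_s},\, 1/M)$ then places $G$ in the unit ball.

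For the $H$ case ($\theta = \infty$) the same argument works with suprema in place of integrals. The care needed is entirely in bookkeeping the subsets $\mathcal J\cap J$ and $\mathcal J\cap J'$. The normalisation constant $c$ is chosen first and $\psi$ afterwards, so $c_5$ depends only on $\alpha, p, \theta, \lambda, \bm m$ (and the fixed $\phi$, $s$, $q$).
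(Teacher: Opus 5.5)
Your proposal is correct and follows essentially the same route as the paper. The paper likewise uses a tensor-product extension $A_J f = g(x^{\overline J}) f(x^J)$ by a fixed bump, an averaging operator $S_J$ over $(I^d)^{\overline J}$ against a fixed bounded weight, factorisation of mixed differences plus Fubini for (3.1.10), and the product rule for (3.1.13). The differences are cosmetic: the paper normalises its bump in $L_\infty$ and takes $\bm\chi = \sgn \D^{\lambda^{\overline J}} g$, so $c_5 = \int |\D^{\lambda^{\overline J}} g|$ is independent of $s,q$ as the statement asserts, whereas your normalisation in $L_s$ and $L_{q'}$ makes $c_5$ depend on $s,q$ as well; this is harmless and disappears if you normalise $\phi$ and $\psi$ in $L_\infty$ instead.
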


\begin{proof}
For $ J \subset \{1,\ldots,d\}: J \ne \emptyset,
\overline J = \{1,\ldots,d\} \setminus J, $ choosing some function
$$
\bm g \in B((S_{p,\theta}^{\alpha^{\overline J}}
B)^\prime((\R^d)^{\overline J})) \cap
C_0^{\bm m^{\overline J}}((I^d)^{\overline J}): \bm g \not \equiv 0,
$$
fix the functions
$$
g = (\bm g ) \mid_{(I^d)^{\overline J}} /
\max(\| \bm g \|_{L_\infty((\R^d)^{\overline J})},
\| \bm g \|_{(S_{p,\theta}^{\alpha^{\overline J}} B)^\prime((\R^d)^{\overline J})}), \\
\bm \chi = \sgn \D^{\lambda^{\overline J}} g
$$

and define the linear operators $ A_J $ and $ S_J $ by the equalities
\begin{equation*}
(A_J f)(x) = g(x^{\overline J}) f(x^J), x \in I^d, f \in L_s((I^d)^J); \\
(S_J f)(x^J) = \int_{(I^d)^{\overline J}}
\bm \chi(y^{\overline J}) f(\eta_J(x, y))
dy^{\overline J},
\end{equation*}
where $ \eta_J: \R^d \times \R^d
\mapsto \R^d $ is the mapping for which
$$
(\eta_J(x,\xi))_j = \begin{cases} x_j, j \in J; \\
\xi_j, j \in \overline J,
\end{cases}  x,\xi \in \R^d.
$$

To verify inclusion (3.1.10), it suffices to note that for
$$
\bm f \in B((S_{p,\theta}^{\alpha^J}
B)^\prime((\R^d)^J)) \cap C_0^{\bm m^J}((I^d)^J),
$$
$$
\bm g \in B((S_{p,\theta}^{\alpha^{\overline J}}
B)^\prime((\R^d)^{\overline J})) \cap
C_0^{\bm m^{\overline J}}((I^d)^{\overline J})
$$
for $ l = l(\alpha), \xi \in \R^d, \mathcal J \subset
\{1,\ldots,d\} $ the equality holds
\begin{multline*}
(\Delta_\xi^{l \chi_{\mathcal J}} (\bm g
\bm f))(x) =
((\prod_{j \in \mathcal J} \Delta_{\xi_j e_j}^{l_j})
(\bm g \bm f))(x) = \\
(((\prod_{j \in \mathcal J \cap \overline J}
\Delta_{\xi_j e_j}^{l_j}) (\prod_{j \in \mathcal J \cap J} \Delta_{\xi_j e_j}^{l_j}))
(\bm g(y^{\overline J}) \bm f(y^J)))(x) = \\
((\prod_{j \in \mathcal J \cap \overline J}
\Delta_{\xi_j e_j}^{l_j}) (\bm g(y^{\overline J})))(x^{\overline J})
((\prod_{j \in \mathcal J \cap J}
\Delta_{\xi_j e_j}^{l_j})
(\bm f(y^J)))(x^J),
\end{multline*}
and use Fubini's theorem.
It is obvious that $ \bm g(x^{\overline J}) \bm f(x^J) \in C_0^{\bm m}(I^d), $ if
$ \bm g \in C_0^{\bm m^{\overline J}}
((I^d)^{\overline J}), \bm f \in
C_0^{\bm m^J}((I^d)^J). $

Checking (3.1.11), taking into account that $ \| g \|_{L_\infty((I^d)^{\overline J})} \le 1, $ for $ f \in L_s((I^d)^J) $ we have
\begin{multline*}
\| A_J f \|_{L_s(I^d)}^s = \int_{I^d}
| g(x^{\overline J}) f(x^J)|^s dx \le
\int_{(I^d)^{\overline J} \times (I^d)^J }
| f(x^J)|^s dx^{\overline J} dx^J =\\
\int_{(I^d)^J } | f(x^J)|^s dx^J =
\| f \|_{L_s((I^d)^J)}^s,
\end{multline*}
from which (3.1.11) follows.

Next, for $ f \in L_q(I^d) $, using H\"older's inequality and Fubini's theorem, we derive
\begin{multline*}
\| S_J f \|_{L_q((I^d)^J)}^q = \int_{(I^d)^J}
| \int_{(I^d)^{\overline J}} \bm \chi(y^{\overline J})
f(\eta_J(x, y))
dy^{\overline J} |^q dx^J \le \\
\int_{(I^d)^J} (\int_{(I^d)^{\overline J}}
| \bm \chi(y^{\overline J})| \cdot
| f(\eta_J(x, y)) |
dy^{\overline J})^q dx^J \le \\
\int_{(I^d)^J} (\int_{(I^d)^{\overline J}}
| f(\eta_J(x, y)) |
dy^{\overline J})^q dx^J \le \\
\int_{(I^d)^J} \int_{(I^d)^{\overline J}}
| f(\eta_J(x, y)) |^q dy^{\overline J} dx^J = 
\int_{I^d} | f(x)|^q dx = \| f \|_{L_q(I^d)}^q,
\end{multline*}
which leads to (3.1.12).

Finally, using the definitions of the operators $A_J$, $S_J$, for $f \in (B((S_{p,\theta}^{\alpha^J} B)^\prime((\mathbb{R}^d)^J))) \mid_{(I^d)^J}$,
we obtain
\begin{multline*}
(S_J \D^\lambda A_J f )(x^J) =
\int_{(I^d)^{\overline J}}
\bm \chi(y^{\overline J})
(\D^\lambda (A_J f))(\eta_J(x, y))
dy^{\overline J} = \\
\int_{(I^d)^{\overline J}}
\bm \chi(y^{\overline J})
(\D^\lambda (g(u^{\overline J}) f(u^J)))(\eta_J(x, y))
dy^{\overline J} = \\
\int_{(I^d)^{\overline J}}
\bm \chi(y^{\overline J})
\D^{\lambda^{\overline J}} g(y^{\overline J})
\D^{\lambda^J} f(x^J) dy^{\overline J} = \\
(\int_{(I^d)^{\overline J}}
| \D^{\lambda^{\overline J}} g(y^{\overline J}) |
dy^{\overline J}) \D^{\lambda^J} f(x^J)  = 
c_5 \D^{\lambda^J} f(x^J)
\end{multline*}
with the constant $ c_5 = \int_{(I^d)^{\overline J}}
| \D^{\lambda^{\overline J}} g(y^{\overline J}) |
dy^{\overline J} > 0, $ i.e., (3.1.13) holds.
\end{proof}

\begin{propos}\label{p3.1.5}

Under the conditions of Lemma \ref{l3.1.4}, there exists a constant $ c_6(\alpha,p,\theta,\lambda,\bm m) > 0 $ such that for $ \rho > 0 $ the inequality holds
\begin{multline*} \tag{3.1.14}
E(\D^\lambda,L_s(I^d),
L_Q(I^d),(B((S_{p,\theta}^\alpha B)^\prime(\R^d)) \cap
C_0^{\bm m}(I^d)) \mid_{I^d},\rho) \ge \\
c_5 E(\D^{\lambda^J},L_s((I^d)^J),
L_Q((I^d)^J),(B((S_{p,\theta}^{\alpha^J}
B)^\prime((\R^d)^J)) \cap
C_0^{\bm m^J}((I^d)^J)) \mid_{(I^d)^J},c_6 \rho).
\end{multline*}
\end{propos}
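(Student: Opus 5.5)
The plan is the standard reduction of a lower bound via the operators of Lemma \ref{l3.1.4}. Fix $\rho>0$ and an arbitrary operator $T \in \mathcal B(L_s(I^d),L_q(I^d))$ with $\|T\| \le \rho$. From it we build an operator on the lower-dimensional cube,
$$
\mathcal T = c_5^{-1} S_J T A_J : L_s((I^d)^J) \mapsto L_q((I^d)^J).
$$
By (3.1.11) and (3.1.12),
$$
\|\mathcal T\|_{\mathcal B(L_s((I^d)^J),L_q((I^d)^J))} \le c_5^{-1} \|S_J\| \, \|T\| \, \|A_J\| \le c_5^{-1}\rho .
$$
So $\mathcal T$ is admissible in the definition of $E$ for the $J$-problem with parameter $c_6\rho$, where $c_6 = c_5^{-1}$ depends only on $\alpha,p,\theta,\lambda,\bm m$ through the fixed function $g$ chosen in the proof of Lemma \ref{l3.1.4}.

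Next take any $f$ in $(B((S_{p,\theta}^{\alpha^J} B)^\prime((\R^d)^J)) \cap C_0^{\bm m^J}((I^d)^J)) \mid_{(I^d)^J}$. By (3.1.13), $\D^{\lambda^J} f = c_5^{-1} S_J \D^\lambda A_J f$. Hence
$$
\|\D^{\lambda^J} f - \mathcal T f\|_{L_q((I^d)^J)} = c_5^{-1}\|S_J(\D^\lambda A_J f - T A_J f)\|_{L_q((I^d)^J)} \le c_5^{-1}\|\D^\lambda A_J f - T A_J f\|_{L_q(I^d)},
$$
again using (3.1.12). By (3.1.10), $A_J f$ lies in the class $(B((S_{p,\theta}^\alpha B)^\prime(\R^d)) \cap C_0^{\bm m}(I^d))\mid_{I^d}$. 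Therefore the right-hand side is at most
$$
c_5^{-1}\sup_{F}\|\D^\lambda F - T F\|_{L_q(I^d)},
$$
the supremum taken over that $d$-dimensional class.

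Now take the supremum over $f$ on the left, then the infimum over admissible $T$. The left-hand side, after the supremum over $f$, is at least $E(\D^{\lambda^J},\ldots,c_6\rho)$, because $\mathcal T$ is one admissible operator. This gives
$$
E(\D^{\lambda^J},\ldots,c_6\rho) \le c_5^{-1} E(\D^\lambda,\ldots,\rho),
$$
which is (3.1.14) with the same constant $c_5$.

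The only delicate point is making sure $A_J f$ is a legitimate element of $D(\D^\lambda)$ and that $\D^\lambda A_J f$ is computed as in (3.1.13). In particular, we need $\D^\lambda A_J f\in L_q(I^d)$, so that both sides are finite and $S_J$ may be applied. This follows from $f\in C_0^{\bm m^J}$, $g\in C_0^{\bm m^{\overline J}}$ and $\lambda\le \bm m$, since the product structure makes $\D^\lambda A_J f = \D^{\lambda^{\overline J}} g \cdot \D^{\lambda^J} f$ bounded.

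The degenerate case $J=\{1,\ldots,d\}$ is handled trivially: there $A_J$ and $S_J$ reduce to identities, up to the constant $c_5 = 1$ in the natural convention. If that case is not covered by Lemma \ref{l3.1.4}, one simply takes $A_J$ and $S_J$ to be the identity.
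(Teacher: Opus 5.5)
Your proposal is correct and follows essentially the same route as the paper's proof. The paper likewise replaces $T$ by $c_6 S_J T A_J$ with $c_6=c_5^{-1}$, and bounds its norm by $c_6\rho$ using (3.1.11)--(3.1.12). It then uses (3.1.10) to place $A_J\mathfrak f$ in the $d$-dimensional class, (3.1.12) to insert $S_J$ at the cost of a factor at most one, and (3.1.13) to rewrite $S_J\D^\lambda A_J\mathfrak f$ as $c_5\D^{\lambda^J}\mathfrak f$.
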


\begin{proof}
For an operator $ T \in \mathcal B(L_s(I^d),L_q(I^d)):
\| T \|_{\mathcal B(L_s(I^d),L_q(I^d))} \le \rho, $ using the notation $ \mathfrak B = (B((S_{p,\theta}^{\alpha^J}
B)^\prime((\R^d)^J)) \cap C_0^{\bm m^J}((I^d)^J)) \mid_{(I^d)^J}, $ due to (3.1.10), (3.1.12), (3.1.13) we have
\begin{multline*}
\sup_{f \in (B((S_{p,\theta}^\alpha B)^\prime(\R^d)) \cap C_0^{\bm m}(I^d)) \mid_{I^d}}
\| \D^\lambda f -T f \|_{L_q(I^d)} \ge \\
\sup_{\mathfrak f \in (B((S_{p,\theta}^{\alpha^J}
B)^\prime((\R^d)^J)) \cap C_0^{\bm m^J}((I^d)^J)) \mid_{(I^d)^J}} \| \D^\lambda A_J \mathfrak f -
T A_J \mathfrak f \|_{L_q(I^d)} = 
\sup_{\mathfrak f \in \mathfrak B}
\| \D^\lambda A_J \mathfrak f -
T A_J \mathfrak f \|_{L_q(I^d)} = \\
\sup_{\mathfrak f \in \mathfrak B}
\| S_J \|_{\mathcal B(L_q(I^d),L_q((I^d)^J))}^{-1}
\| S_J \|_{\mathcal B(L_q(I^d),L_q((I^d)^J))}
\| \D^\lambda A_J \mathfrak f -
T A_J \mathfrak f \|_{L_q(I^d)} \ge \\
\sup_{\mathfrak f \in \mathfrak B}
\| S_J \|_{\mathcal B(L_q(I^d),L_q((I^d)^J))}
\| \D^\lambda A_J \mathfrak f -
T A_J \mathfrak f \|_{L_q(I^d)} \ge
\sup_{\mathfrak f \in \mathfrak B}
\| S_J \D^\lambda A_J \mathfrak f -
S_J T A_J \mathfrak f \|_{L_q((I^d)^J)} = \\
\sup_{\mathfrak f \in \mathfrak B}
\| c_5 \D^{\lambda^J} \mathfrak f -
S_J T A_J \mathfrak f \|_{L_q((I^d)^J)} = 
c_5 \sup_{\mathfrak f \in \mathfrak B}
\| \D^{\lambda^J} \mathfrak f -
c_6 S_J T A_J \mathfrak f \|_{L_q((I^d)^J)},
\end{multline*}
from which, taking into account that due to (3.1.11), (3.1.12) the norm
\begin{multline*}
\| S_J T A_J \|_{\mathcal B(L_s((I^d)^J),L_q((I^d)^J))} \le \\
\| S_J \|_{\mathcal B(L_q(I^d),L_q((I^d)^J))}
\| T \|_{\mathcal B(L_s(I^d),L_q(I^d))}
\| A_J \|_{\mathcal B(L_s((I^d)^J),L_s(I^d))} \le \rho,
\end{multline*}
we obtain the estimate
\begin{multline*}
\sup_{f \in (B((S_{p,\theta}^\alpha B)^\prime(\R^d)) \cap C_0^{\bm m}(I^d)) \mid_{I^d}}
\| \D^\lambda f -T f\|_{L_q(I^d)} \ge \\
c_5 \inf_{ \mathfrak T \in c_6 \rho B(\mathcal
B(L_s((I^d)^J),L_q((I^d)^J)))}
\sup_{\mathfrak f \in \mathfrak B}
\| \D^{\lambda^J} \mathfrak f - \mathfrak T \mathfrak f
\|_{L_q((I^d)^J)},
\end{multline*}
and, consequently, (3.1.14) holds.
\end{proof}

From the proof of Theorem 2.2.6 from [4], it is easy to see that the following statement holds.

\begin{propos}\label{p3.1.6}
Let $\bm{\alpha} \in \R_+, 1 \le  p < \infty, 1 \le q \le \infty, \bm \lambda \in \Z_+ $ satisfy the condition $ \bm \alpha -\bm \lambda -(1/p -1/q)_+ >0 $ and let $ 1 \le s < \infty $ and the conditions $ \bm \lambda +(1/s -1/q)_+ >0, \bm \alpha -(1/p -1/s)_+ >0 $ hold. Let also $ \bm \gamma = \bm \alpha -\bm \lambda -(1/p -1/q)_+, \bm \tau = \bm \lambda +(1/s -1/q)_+, 1 \le \theta \le \infty. $ Then there exist constants $ c_7(\bm \alpha,p,\theta,q,s,\bm \lambda) >0, \rho_1 >0 $ such that for $ \rho > \rho_1  $ the inequality holds
\begin{multline*} \tag{3.1.15}
E(\D^{\bm \lambda},L_s(I), L_Q(I),
(B((S_{p,\theta}^{\bm \alpha} B)^0(\R)) \cap
C_0^{\infty}(I)) \mid_{I},\rho) \ge c_{7}
\rho^{-\bm \gamma /\bm \tau}.
\end{multline*}
\end{propos}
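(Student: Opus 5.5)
The approach is the standard one for lower bounds in Stechkin's problem: reduce to a single "bump at scale $n$" argument, using the Stechkin-type inequality relating $E$ to the modulus of continuity of the operator $\D^{\bm\lambda}$ on the class. Concretely, for any admissible $T$ with $\|T\|\le\rho$ and any $f$ in the class with $-f$ also in the class (the class is symmetric), one has
\begin{equation*}
2\sup_{f}\|\D^{\bm\lambda}f-Tf\|_{L_q(I)} \ge \|\D^{\bm\lambda}f_n\|_{L_q(I)}-\rho\|f_n\|_{L_s(I)}
\end{equation*}
for any single test function $f_n$ in the class. This follows from $\|\D^{\bm\lambda}f_n\|\le\|\D^{\bm\lambda}f_n-Tf_n\|+\|Tf_n\|$. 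So it suffices to produce, for each $n\in\N$, a function $f_n\in B((S_{p,\theta}^{\bm\alpha}B)^0(\R))\cap C_0^\infty(I)$ with
\begin{equation*}
\|\D^{\bm\lambda}f_n\|_{L_q(I)}\asymp 2^{-n\bm\gamma},\qquad \|f_n\|_{L_s(I)}\asymp 2^{-n(\bm\gamma+\bm\tau)},
\end{equation*}
and then choose $n$ with $2^{n\bm\tau}\asymp c\rho$ (possible for $\rho>\rho_1$). With that choice the right side is $\ge c\,2^{-n\bm\gamma}\asymp\rho^{-\bm\gamma/\bm\tau}$, giving (3.1.15).

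For the construction, fix a nonzero $\varphi\in C_0^\infty(I)$ and split into cases according to the signs of $(1/p-1/q)_+$ and $(1/s-1/q)_+$. This mirrors the proof of Theorem 2.2.6 of [4].

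In the \emph{concentrated} case ($q\ge p$ and $q\ge s$, or generally when a single bump is optimal), take $f_n(x)=a_n\varphi(2^nx)$. Then $\|\D^{\bm\lambda}f_n\|_{L_q}=a_n2^{n(\bm\lambda-1/q)}c$ and $\|f_n\|_{L_s}=a_n2^{-n/s}c$. The Besov norm $\|f_n\|_{(S^{\bm\alpha}_{p,\theta}B)^0}$ is at most $c\,a_n2^{n(\bm\alpha-1/p)}$; this is checked via $\Omega^{l}(f_n,t)_{L_p}\le \min(2\|f_n\|_{L_p},\,t^l\|f_n^{(l)}\|_{L_p})$ and integrating in $t$, which converges since $l>\bm\alpha$. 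Hence $a_n=c2^{-n(\bm\alpha-1/p)}$ works.

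In the \emph{spread} case, take a sum of $2^n$ disjoint translated bumps with the same scaling $\varphi(2^nx-k)$ and signs $\pm1$. Then all $L_r$ norms scale like $a_n2^{n(\cdot)}\cdot 2^{n\cdot 0}$, so the $1/p,1/q,1/s$ exponents cancel. The Besov norm is again controlled by the same modulus estimate, using disjointness of supports so that the $L_p$ norms of the differences add in $p$-th power.

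Choosing between these (or a mixed number $2^{n\mu}$ of bumps) reproduces exactly the exponents $\bm\gamma=\bm\alpha-\bm\lambda-(1/p-1/q)_+$ and $\bm\tau=\bm\lambda+(1/s-1/q)_+$. The hypothesis $\bm\alpha-(1/p-1/s)_+>0$ ensures that the $L_s$ term indeed decays as required, so that $\bm\gamma+\bm\tau$ is the correct exponent for $\|f_n\|_{L_s}$ in every case.

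The main obstacle is the exponent bookkeeping: matching, in each sign case of $p,q,s$, the number of bumps so that simultaneously $\|\D^{\bm\lambda}f_n\|_{L_q}\asymp2^{-n\bm\gamma}$ and $\|f_n\|_{L_s}\lesssim2^{-n(\bm\gamma+\bm\tau)}$. A related subtle point is the case $q<p$ (or $q<s$) when only the positive parts vanish. Here I would first try the single-bump choice when $q\ge p$ and the full $2^n$-bump choice when $q<p$, and verify the $L_s$ exponent separately for $s\le q$ and $s>q$. The Besov-norm bound for smooth bumps is routine.
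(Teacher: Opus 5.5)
\textbf{There is a genuine gap in the strictly mixed cases $p<q<s$ and $s<q<p$.}

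Your argument only uses $\|Tf_n\|_{L_q}\le\rho\|f_n\|_{L_s}$ for the very function $f_n$ whose error you measure. Take your test family: $2^{n(1-\nu)}$ disjoint bumps $\varphi(2^n\cdot-k)$, $0\le\nu\le 1$, normalized to class norm $\asymp 1$. For it, the argument yields $E\gtrsim\rho^{-R(\nu)}$ with
\begin{equation*}
R(\nu)=\frac{\bm\alpha-\bm\lambda-\nu(1/p-1/q)}{\bm\lambda+\nu(1/s-1/q)} .
\end{equation*}
This is linear-fractional, hence monotone, in $\nu$, so only the endpoints $\nu\in\{0,1\}$ matter.

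\emph{Where it works.} It gives $\bm\gamma/\bm\tau$ when $p,s\le q$ (take $\nu=1$). It also gives $\bm\gamma/\bm\tau$ when $p,s\ge q$ (take $\nu=0$); this includes $p=q<s$, where your default single-bump rule fails.

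\emph{Where it fails.}
\begin{itemize}
\item For $p<q<s$ the target is $(\bm\alpha-\bm\lambda-1/p+1/q)/\bm\lambda$. Both $R(1)=(\bm\alpha-\bm\lambda-1/p+1/q)/(\bm\lambda-1/q+1/s)$ and $R(0)=(\bm\alpha-\bm\lambda)/\bm\lambda$ are strictly larger.
\item For $s<q<p$ the target is $(\bm\alpha-\bm\lambda)/(\bm\lambda+1/s-1/q)$. Both $R(1)=(\bm\alpha-\bm\lambda+1/q-1/p)/(\bm\lambda+1/s-1/q)$ and $R(0)=(\bm\alpha-\bm\lambda)/\bm\lambda$ are strictly larger, and $R(0)$ is vacuous if $\bm\lambda=0$.
\end{itemize}
So your claim that a suitable number of bumps ``reproduces exactly the exponents'' is false in these cases. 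A cleverer single $f_n$ should not help either. Such an argument only sees the Stechkin modulus $\sup\{\|\D^{\bm\lambda}f\|_{L_q}: f\in K,\ \|f\|_{L_s}\le\delta\}$. In these regimes that modulus is essentially attained by your two endpoint configurations, and it is too small.

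\textbf{The missing idea: use the linearity of $T$ to decouple the class constraint from the operator-norm constraint.}

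Let $\varphi_k=\varphi(2^n\cdot-k)$ and $\Delta_k=2^{-n}(k+I)$ for $k=0,\ldots,2^n-1$. Convexity gives $\tfrac12(|v+u|^q+|v-u|^q)\ge|u|^q$. Averaging over signs then yields, for $1\le q<\infty$ (automatic in both mixed cases),
\begin{equation*}
\max_{\epsilon\in\{-1,1\}^{2^n}}\Bigl\|\sum_k\epsilon_k u_k\Bigr\|_{L_q(I)}^q\ \ge\ \sum_k\|u_k\|_{L_q(\Delta_k)}^q .
\end{equation*}

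\emph{Case $s<q<p$.} Put $f_k=c2^{-n\bm\alpha}\varphi_k$, $F_\epsilon=\sum_k\epsilon_kf_k$ and $u_k=\D^{\bm\lambda}f_k-Tf_k$. The $F_\epsilon$ lie in the class, while $\|Tf_k\|_{L_q}\le\rho c2^{-n(\bm\alpha+1/s)}\|\varphi\|_{L_s}$ is bounded using a single bump. Hence $2^{n\bm\tau}\ge C\rho$ gives $\max_\epsilon\|\D^{\bm\lambda}F_\epsilon-TF_\epsilon\|_{L_q}\gtrsim 2^{-n(\bm\alpha-\bm\lambda)}$.

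\emph{Case $p<q<s$.} Use single bumps $f_k=c2^{-n(\bm\alpha-1/p)}\varphi_k$, each in the class, and $u_k=Tf_k$. Since $\|\sum_k\epsilon_kf_k\|_{L_s}=c2^{-n(\bm\alpha-1/p)}\|\varphi\|_{L_s}$, some $k$ satisfies $\|Tf_k\|_{L_q(\Delta_k)}\le\rho c2^{-n(\bm\alpha-1/p+1/q)}\|\varphi\|_{L_s}$. Then $\|\D^{\bm\lambda}f_k-Tf_k\|_{L_q}\gtrsim 2^{-n\bm\gamma}$ once $2^{n\bm\lambda}\ge C\rho$.

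The paper itself gives no argument here; it only refers to the proof of Theorem 2.2.6 in [4]. Your remaining ingredients are fine: the Besov-norm bound for bumps, the choice $2^{n\bm\tau}\asymp\rho$, and the non-mixed cases.
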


\begin{theorem}\label{t3.1.7}

Under the conditions and notation of Lemma \ref{l2.2.1} and Theorem \ref{t2.2.2}, there exist constants $ c_{8}(U,X,Y,K) >0, \rho_2(U,X,Y,K) >0 $ such that for $ \rho > \rho_2 $ the inequality holds
\begin{equation*} \tag{3.1.16}
E(U,X,Y,K,\rho) \ge c_{8} \rho^{-\mn(\tau^{-1} \gamma)}.
\end{equation*}
\end{theorem}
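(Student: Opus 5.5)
The lower bound will come from chaining the three reductions of this subsection: Lemma \ref{l3.1.3} (passage from $D$ to the cube), Proposition \ref{p3.1.5} (passage to a one-dimensional factor), and Proposition \ref{p3.1.6} (the one-dimensional estimate). First I choose the coordinate. Pick $j \in \Nu_{1,d}$ with $\tau_j^{-1}\gamma_j = \mn(\tau^{-1}\gamma)$ and set $J = \{j\}$. Then $\bm\alpha = \alpha_j$, $\bm\lambda = \lambda_j$, $\bm\gamma = \gamma_j$ and $\bm\tau = \tau_j$, so that $\bm\gamma/\bm\tau = \mn(\tau^{-1}\gamma)$.

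Next I check the hypotheses. Condition (2.1.20) in coordinate $j$ gives $\bm\alpha - \bm\lambda - (1/p-1/q)_+ > 0$. The condition of Lemma \ref{l2.2.1} gives $\bm\tau = \tau_j > 0$, since it is a vector inequality. Condition (2.2.10) gives $\bm\alpha - (1/p-1/s)_+ > 0$; when $s \le p$ this is trivial. In the case $1 \le q < p$ or $q < s$ (bounded $D$) the positivity conditions are still coordinatewise, so Proposition \ref{p3.1.6} applies. I also fix $\bm m \ge m$ and $\bm m \ge \lambda$, as required in Lemmas \ref{l3.1.3} and \ref{l3.1.4}.

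The chain runs as follows, noting that $K = (\mathcal S^\alpha_{p,\theta}\mathcal B)^\prime(D) = B((S^\alpha_{p,\theta}B)^\prime(D))$. By (3.1.7),
\[
E(U,X,Y,K,\rho) \ge c_3\, E\bigl(\D^\lambda, L_s(I^d), L_q(I^d), (B((S^\alpha_{p,\theta}B)^\prime(\R^d))\cap C_0^{\bm m}(I^d))\mid_{I^d}, c_4\rho\bigr).
\]
By (3.1.14) with $J=\{j\}$, this is bounded below by $c_3 c_5\, E(\D^{\lambda_j}, L_s(I), L_q(I), \mathfrak B, c_6 c_4 \rho)$, where $\mathfrak B$ is the one-dimensional class $(B((S^{\alpha_j}_{p,\theta}B)^\prime(\R))\cap C_0^{\bm m_j}(I))\mid_I$.

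To apply (3.1.15) I must pass from the primed class to the $0$-class and from $C_0^{\bm m_j}$ to $C_0^\infty$. By (1.1.5), $B((S^{\alpha_j}_{p,\theta}B)^0(\R)) \subset B((S^{\alpha_j}_{p,\theta}B)^\prime(\R))$, and $C_0^\infty(I)\subset C_0^{\bm m_j}(I)$. Hence the class in (3.1.15) is contained in $\mathfrak B$. Since $E(\cdot)$ is monotone in the set $K$ (a supremum over a larger set), $E$ over $\mathfrak B$ is at least $E$ over the smaller class. This in turn is at least $c_7 (c_4c_6\rho)^{-\bm\gamma/\bm\tau}$ for $c_4c_6\rho > \rho_1$. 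Setting $\rho_2 = \rho_1/(c_4c_6)$ and absorbing constants gives (3.1.16).

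The main obstacle is verifying that the normalisations of the norms agree, so that the inclusion from (1.1.5) really holds on $\R$ in one dimension. A further point is that Proposition \ref{p3.1.6}, being extracted from [4], covers every admissible triple $(p,q,s)$ here, including $q<p$. I would handle the latter by citing that statement as given, since its hypotheses are exactly the coordinatewise conditions checked above.
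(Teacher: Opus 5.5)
Your proposal is correct and follows the paper's own proof: Lemma \ref{l3.1.3} with $\bm m \ge m$, then Proposition \ref{p3.1.5} with $J=\{j\}$ for a coordinate attaining $\mn(\tau^{-1}\gamma)$, then (1.1.5) together with $C_0^\infty(I)\subset C_0^{\bm m_j}(I)$ and monotonicity of $E$ in the class, which lets you invoke Proposition \ref{p3.1.6} with $\bm\alpha=\alpha_j$, $\bm\lambda=\lambda_j$. One slip: the equality $K = B((S^\alpha_{p,\theta}B)^\prime(D))$ is false in general, since $K$ imposes no bound on $\|f\|_{L_p(D)}$; but the inclusion $B((S^\alpha_{p,\theta}B)^\prime(D))\subset K$ is all you need, again by monotonicity of $E$ in the class.
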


\begin{proof}
To obtain (3.1.16), it suffices to successively apply Lemma \ref{l3.1.3}, fixing $ \bm m \ge m, $ then, choosing $ j
\in \{1,\ldots,d\}: \tau_j^{-1} \gamma_j = \mn(\tau^{-1} \gamma),
$ use Proposition \ref{p3.1.5} for $ J = \{j\}, $ and, finally, taking into account (1.1.5), apply Proposition \ref{p3.1.6} for
$ \bm \alpha = \alpha_j, \bm \lambda = \lambda_j,
\bm \gamma = \gamma_j, \bm \tau = \tau_j. $
\end{proof}

\end{document}